\newtheorem{teo}{Theorem}[section]
\newtheorem{lm}[teo]{Lemma}
\newtheorem{prop}[teo]{Proposition}
\newtheorem{coro}[teo]{Corollary}
\newtheorem*{teoA}{Theorem A}
\newtheorem*{teoB}{Theorem B}
\theoremstyle{definition}
\newtheorem{oss}[teo]{Remark}
\newtheorem*{ack}{Acknowledgements}
\numberwithin{equation}{section}
\author[Bousquet]{Pierre Bousquet}
\address{Aix-Marseille Universit\'e, CNRS, Centrale Marseille, I2M, UMR 7373, 13453 Marseille, France}
\email{pierre.bousquet@univ-amu.fr}
\author[Brasco]{Lorenzo Brasco}
\address{Aix-Marseille Universit\'e, CNRS, Centrale Marseille, I2M, UMR 7373, 13453 Marseille, France}
\email{lorenzo.brasco@univ-amu.fr}
\author[Julin]{Vesa Julin}
\address{Department of Mathematics and Statistics, University of Jyv\"askyl\"a, P.O. Box 35 (MaD), 40014 Jyv\"askyl\"a, Finland}
\email{vesa.julin@jyu.fi}
\keywords{Degenerate elliptic equations; Anisotropic problems; Lipschitz regularity}
\subjclass[2010]{35J70, 35B65, 49K20}
\date{\today}
\title[Widely degenerate problems]{Lipschitz regularity for local minimizers\\ of some widely degenerate problems}
\begin{document}

\begin{abstract}
We consider local minimizers of the functional 
\[
\sum_{i=1}^N \int (|u_{x_i}|-\delta_i)^p_+\, dx+\int f\, u\, dx,
\]
where $\delta_1,\dots,\delta_N\ge 0$ and $(\,\cdot\,)_+$ stands for the positive part.
Under suitable assumptions on $f$, we prove that local minimizers are Lipschitz continuous functions if $N=2$ and $p\ge 2$, or if $N\ge 2$ and $p\ge 4$. 
\end{abstract}

\maketitle

\tableofcontents

\section{Introduction}

\subsection{Overview}
This paper is devoted to prove Lipschitz continuity for local minimizers of the anisotropic functional 
\begin{equation}
\label{Fintro}
\mathfrak{F}(u;\Omega')=\sum_{i=1}^N \int_{\Omega'} \frac{(|u_{x_i}|-\delta_i)_+^p}{p}\,dx+\int_{\Omega'} f\, u\,dx,\qquad  u\in W^{1,p}_{loc}(\Omega),\ \Omega'\Subset\Omega.
\end{equation}
Here $\Omega\subset\mathbb{R}^N$ is an open set, $2\le p<\infty$, $\delta_i\ge 0$, $(\,\cdot\,)_+$ stands for the positive part and $f\in L^{p'}_{loc}(\Omega)$ where $p'=p/(p-1)$. 
This functional $\mathfrak{F}$ stands for a model case of a more general class of problems, with specific growth and monotonicity assumptions. For the sake of clarity, the results in this paper are only stated for $\mathfrak{F}$. However, their proofs can be easily adapted to embrace
general functionals having a similar structure.
\par
The functional $\mathfrak{F}$ naturally arises in problems of Optimal Transport with congestion and anisotropic effects, see for example \cite{BraCar_dyn,BraCar} for some motivations. These two papers contained among others some regularity results for local minimizers of \eqref{Fintro}. For instance \cite[Main Theorem]{BraCar} proves that if $f\in L^\infty_{loc}(\Omega)$, then $u$ is ``almost Lipschitz'', i.e. $u\in W^{1,r}_{loc}(\Omega)$ for every $r\ge 1$. On the other hand, in \cite{BraCar_dyn} it is proved that if $f\in W^{1,p'}_{loc}(\Omega)$, then
\begin{equation}
\label{sobolev}
(|u_{x_i}|-\delta_i)_+^\frac{p}{2}\, \frac{u_{x_i}}{|u_{x_i}|}\in W^{1,2}_{loc}(\Omega),\qquad i=1,\dots,N.
\end{equation}
However, it must be mentioned that to the best of our knowledge, {\it Lipschitz regularity of local minimizers is still unknown}. More surprisingly, even the case $\delta_1=\dots=\delta_N=0$ does not seem to be fully understood. Only very recently some results have been obtained in this case, see \cite{BD,De}.
\par
Observe that local minimizers of \eqref{Fintro} are local weak solutions of the anisotropic degenerate equation
\begin{equation}
\label{equazione}
\sum_{i=1}^N \left((|u_{x_i}|-\delta_i)_+^{p-1}\, \frac{u_{x_i}}{|u_{x_i}|}\right)_{x_i}=f,
\end{equation}
which reduces to the Poisson equation for the so-called {\it pseudo $p-$Laplacian} when $\delta_1=\dots=\delta_N=0$, i.e.
\begin{equation}
\label{equazione2}
\sum_{i=1}^N \left(|u_{x_i}|^{p-2}\, u_{x_i}\right)_{x_i}=f.
\end{equation}
The terminology ``pseudo $p-$Laplacian'' appears in \cite{BK}. We just point out that such an operator already appeared in J.-L. Lions's monograph \cite{Lions}, where existence issues for solutions to evolutions equations are tackled.  
\vskip.2cm
In order to neatly explain the difficulty of the problem, we now recall some class of functionals for which the Lipschitz property for local minimizers is known to be true.
The first one is given by 
\begin{equation}
\label{plaplace}
\int G(\nabla u)\, dx,
\end{equation}
with $G$ enjoying a {\it $p-$Laplacian-type structure at infinity}. This means that there exist $c,C>0$ and $m\ge 0$ such that $G$ verifies the ellipticity condition
\begin{equation}
\label{elliptic}
\langle D^2 G(z)\,\xi,\xi\rangle\ge c\,|z|^{p-2}\, |\xi|^2,\qquad |z|\ge m, 
\end{equation}
and the growth condition
\begin{equation}
\label{growth}
|D G(z)|\le C\, |z|^{p-1},\qquad |z|\ge m.
\end{equation}
We refer the reader to \cite{Br,CCG,CE,EMT} and \cite{FFM} for example. For completeness, we also point out the papers \cite{CF2,CF} and \cite{SV} for related regularity results on the term $\nabla G(\nabla u)$, when $m>0$.
\par
Another type of well-studied functionals having some similarities with $\mathfrak{F}$ 
is given by (see for example \cite{BFZ2, BFZ} and \cite[Section 4]{FS}) 
\begin{equation}
\label{splittingtype}
\int \widetilde G(\nabla u)\, dx,\qquad\qquad \mbox{ with }\quad \widetilde G(z)=\sum_{i=1}^N (\mu+|z_{i}|^2)^\frac{p_i}{2}.
\end{equation}
Here $\mu>0$ and $1<p_1\le p_2\le \dots\le p_N$ are possibly different exponents. When the $p_i$ are not equal, such a functional belongs to the class of {\it problems with non standard growth conditions}, whose systematic study started with the paper \cite{Ma89} by Marcellini. In this case we can infer local Lipschitz continuity if the exponents $p_i$ are not ``too far apart'' (see the above mentioned references for more details).
\par
However, our functional $\mathfrak{F}$ {\it does not fall neither in the class of the functional \eqref{plaplace} nor in that of \eqref{splittingtype}}. Indeed, observe that in our case
\[
F(z)=\sum_{i=1}^N \frac{(|z_i|-\delta_i)^p_+}{p},
\] 
verifies \eqref{growth}, {\it but \eqref{elliptic} crucially fails to hold}, since for every $m>0$, there always exists $z$ such that $|z|=m$ and the least eigenvalue of $D^2 F(z)$ is $0$. Observe that this phenomenon already occurs for the pseudo $p-$Laplacian, i.e. when $\delta_1=\dots=\delta_N=0$. Indeed, the main difficulty of the problem is that the region where ellipticity fails {\it is unbounded}.
\par
For the same reason, $\mathfrak{F}$ is not of the type \eqref{splittingtype}, since already in the standard growth case $2\le p_1=p_2=\dots=p_N$ we have
\[
0<\min_{|\xi|=1}\langle D^2 \widetilde G(z)\, \xi, \xi\rangle,\qquad z\in\mathbb{R}^N.
\]
When one allows $\mu=0$ in \eqref{splittingtype}, the corresponding functional becomes degenerate along the axes $z_i=0$, like in the case of the pseudo $p-$Laplacian. This case has been considered in the pioneering paper \cite{UU} by Uralt'seva and Urdaletova. There the Lipschitz character of minimizers has been shown under some restrictions on the exponents $p_1,\dots,p_N$, by using the so-called {\it Bernstein method}. Though the growth conditions considered are more general than ours, the type of degeneracy is again weaker than that admitted in $\mathfrak{F}$ (see the next subsection for more comments on the result of \cite{UU}).
\vskip.2cm
About the restriction $p\ge 2$ considered in this paper, it is noteworthy to observe that for $1<p<2$ our functional has a $p-$Laplacian-type structure. Indeed, in this case $p-2<0$ and thus \eqref{elliptic} is satisfied with $m=0$, i.e.
\[
\langle D^2 F(z)\,\xi,\xi\rangle=(p-1)\,\sum_{i=1}^N (|z_i|-\delta_i)_+^{p-2}\, |\xi_i|^2\ge (p-1)\, |z|^{p-2}\, |\xi|^2, \qquad \xi\in\mathbb{R}^N,z\in\mathbb{R}^N,
\]
while of course
\[
|DF(z)|\le |z|^{p-1},\qquad z\in\mathbb{R}^N.
\]
Then in this case local minimizers are locally Lipschitz continuous by\footnote{To be more precise, for $1<p< 2$ the function $F$ is not $C^2$. However, this is not an issue, since the result of \cite[Theorem 2.7]{FFM} holds for convex functions satisfying a qualified form of uniform convexity for $|z|\ge m$. This coincides with \eqref{elliptic} if the function is $C^2$, but it is otherwise more general.} \cite[Theorem 2.7]{FFM}. 

\subsection{Main results}
In this paper, we prove the following results.

\begin{teoA}[Two dimensional case]
\label{teo:A}
Let $N=2$ and $p\ge 2$. Let $f\in W^{1,p'}_{loc}(\Omega)$, where $p'=p/(p-1)$. Then every local minimizer $U\in W^{1,p}_{loc}(\Omega)$ of the functional $\mathfrak{F}$ is a locally Lipschitz continuous function.
\end{teoA}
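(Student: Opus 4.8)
The overall strategy is the classical one for Lipschitz regularity of minimizers of degenerate functionals: regularize, prove a priori gradient bounds that are uniform in the regularization by differentiating the equation and running a Caccioppoli--Moser iteration, then pass to the limit.

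\emph{Step 1 (approximation).} Fix a ball $B_R\Subset\Omega$. Mollify $f$ to $f_\varepsilon\in C^\infty$ and replace $F(z)=\sum_i(|z_i|-\delta_i)_+^p/p$ by a smooth, uniformly convex approximant $F_\varepsilon$ with diagonal Hessian comparable to $\varepsilon\,\mathrm{Id}+\mathrm{diag}\big((|z_i|-\delta_i)_+^{p-2}\big)$ for large $|z|$ and with $p$-growth. The minimizer $u_\varepsilon$ of $v\mapsto\int_{B_R}F_\varepsilon(\nabla v)+f_\varepsilon v$ with datum $U$ is unique and, by standard regularity for smooth uniformly elliptic functionals, belongs to $C^{1,\alpha}_{loc}\cap W^{2,2}_{loc}$, which is enough to justify the computations below; strict convexity together with $F_\varepsilon\to F$, $f_\varepsilon\to f$ gives $u_\varepsilon\to U$ in $W^{1,p}_{loc}(B_R)$. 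It thus suffices to prove $\|\nabla u_\varepsilon\|_{L^\infty(B_{R/2})}\le C$ with $C$ independent of $\varepsilon$ (depending only on $p$, $\delta_i$, $R$, $\|f\|_{W^{1,p'}(B_R)}$, $\|\nabla U\|_{L^p(B_R)}$). From now on I drop $\varepsilon$.

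\emph{Step 2 (partial Caccioppoli for powers of the gradient).} Differentiating the Euler--Lagrange equation in $x_k$ gives $\sum_i\partial_{x_i}(a_i\,\partial_{x_i}u_{x_k})=f_{x_k}$ with $a_i:=F^{ii}_\varepsilon(\nabla u)\ge 0$ and $a_i\asymp(|u_{x_i}|-\delta_i)_+^{p-2}$ away from the degeneracy set. For $\beta\ge 0$, testing the $k$-th equation with $\eta^2\,\Phi_\beta(u_{x_k})$, where $\Phi_\beta(t)\asymp\operatorname{sgn}(t)\,(|t|-\delta_k)_+^{1+2\beta}$ and $\eta$ is a cutoff, produces the nonnegative quantities $\int a_i\,\Phi_\beta'(u_{x_k})\,(u_{x_ix_k})^2\eta^2$; the terms carrying $\nabla\eta$ are absorbed by Young's inequality into reverse--H\"older right-hand sides $\int(1+|u_{x_1}|+|u_{x_2}|)^{q(\beta)}|\nabla\eta|^2$ (the mixed products being split, again by Young, into integrable powers of $|u_{x_1}|$ and $|u_{x_2}|$), and the forcing term is controlled using $f\in W^{1,p'}_{loc}$ after an integration by parts and an absorption. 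Writing $W_{i,\beta}:=(|u_{x_i}|-\delta_i)_+^{(p+2\beta)/2}$ (the case $\beta=0$ being essentially \eqref{sobolev}), the diagonal term $i=k$ reconstructs $\int|\partial_{x_i}W_{i,\beta}|^2\eta^2$. Thus one controls $\partial_{x_i}W_{i,\beta}$ everywhere and $\partial_{x_j}W_{i,\beta}$ ($j\ne i$) \emph{only on} $\{|u_{x_j}|>\delta_j\}$, since on the degeneracy set $\{|u_{x_j}|\le\delta_j\}$ the coefficient $a_j$ vanishes.

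\emph{Step 3 (the two-dimensional point: recovering the missing derivative).} This is the crux, and it is where $N=2$ enters. Because $\{|u_{x_j}|\le\delta_j\}$ is unbounded, the estimate of Step 2 does not yet control $\nabla W_{i,\beta}$. In $N=2$ there is a single mixed derivative, $u_{x_1x_2}=u_{x_2x_1}$, and on the slab $\{|u_{x_2}|<\delta_2\}$ the equation collapses to $\partial_{x_1}\!\big((|u_{x_1}|-\delta_1)_+^{p-1}\operatorname{sgn}u_{x_1}\big)=f$. Differentiating this identity in $x_2$, testing against $\eta^2\cdot(\text{cutoff of the slab})\cdot u_{x_2}\cdot(|u_{x_1}|-\delta_1)_+^{2\beta}$ --- here $u_{x_2}$ is \emph{bounded} on the slab, so the degeneracy now works in our favour --- and using the symmetry of second derivatives, one obtains a bound for $\int_{\{|u_{x_2}|\le\delta_2\}}(|u_{x_1}|-\delta_1)_+^{p+2\beta-2}(u_{x_1x_2})^2\,\eta^2$ in terms of $f$ and lower-order gradient integrals. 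Combined with Step 2, and with the symmetric statement (roles of $1$ and $2$ exchanged), this yields, for all $\beta\ge 0$ and concentric $B_\rho\Subset B_R$,
\[
\int_{B_\rho}|\nabla W_{i,\beta}|^2\,dx\ \le\ \frac{C\,(1+\beta)^{a}}{(R-\rho)^2}\int_{B_R}\big(1+|u_{x_1}|+|u_{x_2}|\big)^{q(\beta)}\,dx,\qquad i=1,2,
\]
with $q(\beta)$ affine in $\beta$ and $C,a$ independent of $\varepsilon$.

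\emph{Step 4 (iteration and conclusion).} By the Sobolev inequality in the plane, $W_{i,\beta}\in W^{1,2}_{loc}$ embeds into every $L^{2\chi}$, $1<\chi<\infty$, with a constant we may keep under control. Since in $N=2$ the gain $\chi$ may be taken as large as we please --- the one place the dimension enters the iteration (in general $N$ one is stuck with $\chi=N/(N-2)$, which is why a different argument, needing $p\ge 4$, is required there) --- choosing $\beta$ and $\chi$ appropriately at each stage turns the displayed inequality into a Moser scheme $q_n\to\infty$ with summable geometric losses, whence
\[
\|\nabla u_\varepsilon\|_{L^\infty(B_{R/2})}\le C\big(1+\|\nabla u_\varepsilon\|_{L^p(B_R)}+\|f\|_{W^{1,p'}(B_R)}\big),
\]
uniformly in $\varepsilon$. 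Letting $\varepsilon\to 0$ and covering any $\Omega'\Subset\Omega$ with such balls gives the local Lipschitz continuity of $U$. I expect Step 3 to be the main obstacle: making the recovery of the mixed derivative on the degenerate slabs rigorous at the level of the regularized problem, with constants independent of $\varepsilon$ and in a form that feeds the iteration of Step 4, is the technical heart of the proof, whereas Steps 1 and 4 are, by comparison, routine.
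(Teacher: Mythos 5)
Your Steps 1, 2, and 4 outline the correct global structure, but there are two genuine gaps, one minor and one serious.

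\emph{The convergence back to $U$.} You claim that strict convexity forces $u_\varepsilon\to U$. But the limit functional $\mathfrak{F}$ is \emph{not} strictly convex once some $\delta_i>0$; only the regularized $\mathfrak{F}_\varepsilon$ is. So $u_\varepsilon$ converges (up to a subsequence) to \emph{some} minimizer $\widetilde u$ of the Dirichlet problem with datum $U$, but a priori $\widetilde u\ne U$. The paper closes this gap with a "propagation of regularity" lemma: if $u_1,u_2$ are two minimizers, then $\big||(u_1)_{x_i}|-|(u_2)_{x_i}|\big|\le 2\delta_i$ a.e., because on the set where this fails $g_i$ is strictly convex along the relevant segment, contradicting minimality of the midpoint. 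Hence a Lipschitz bound for $\widetilde u$ transfers to $U$. You need some such device.

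\emph{The missing derivative (Step~3) --- this is the real problem.} Your idea is to recover control of $\partial_{x_2}W_1$ on the slab $\{|u_{x_2}|\le\delta_2\}$ by "restricting" the equation there and exploiting boundedness of $u_{x_2}$. This does not work. The slab $\{|u_{x_2}|\le\delta_2\}$ is merely a measurable subset of $B$, not a subdomain on which the PDE can be restricted, differentiated in $x_2$, and integrated by parts; the proposed "cutoff of the slab" is a truncation in the dependent variable, not a test function, and moving derivatives across it produces uncontrolled measure-theoretic boundary terms. Moreover, boundedness of $u_{x_2}$ on the slab gives no handle whatsoever on $u_{x_1x_2}$ or on $\partial_{x_2}W_1^{p/4+s/2}$ there --- the quantity you are trying to bound lives on the level sets of $u_{x_2}$ but depends on $u_{x_1}$, which is unconstrained on the slab. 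Also note that because of the $\varepsilon|z|^2/2$ regularization, $a_2=g'_{2,\varepsilon}(u_{x_2})=\varepsilon u_{x_2}$ does not even vanish on the slab, so "the equation collapses" is false at the regularized level where all computations take place.

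The paper's actual mechanism is quite different and does not attempt an $L^2$ recovery of $\partial_{x_2}W_1^{p/4+s/2}$ at all. It uses the Sobolev-type estimate \eqref{sobolev}, i.e.\ an $\varepsilon$-uniform $W^{1,2}_{loc}$ bound on $W_1^{p/4}$, which gives $\partial_{x_2}W_1^{p/4}\in L^2$. Writing $\partial_{x_2}W_1^{p/4+s/2}=\tfrac{p+2s}{p}\,\big(\partial_{x_2}W_1^{p/4}\big)\,W_1^{s/2}$ and applying H\"older with exponents $2/q$ and $2/(2-q)$ for a suitable $q\in(1,2)$ yields only an $L^q$ (not $L^2$) bound for the missing derivative. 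The diagonal Caccioppoli estimate gives a genuine $L^2$ bound for $\partial_{x_1}W_1^{p/4+s/2}$. These mismatched integrabilities are then combined via Troisi's anisotropic Sobolev inequality
\[
\Big(\int|v|^{\overline q^*}\Big)^{2/\overline q^*}\lesssim\Big(\int|v_{x_1}|^2\Big)^{1/2}\Big(\int|v_{x_2}|^q\Big)^{1/q},\qquad \overline q^*=\frac{4q}{2-q},
\]
which produces the gain needed for Moser iteration. The specific choice $q=2p/(p+1)$ is what makes the exponents close up. It is precisely this use of an \emph{anisotropic} Sobolev inequality --- even though the functional has the same $p$-growth in all directions --- that makes the argument two-dimensional: in $N\ge3$ dimensions the compatibility condition becomes $q<2/(N-1)$, incompatible with $q>1$.

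Finally, your Step~4 claim that "the gain $\chi$ may be taken as large as we please" in $N=2$ is too casual: the constants in $W^{1,2}\hookrightarrow L^{2\chi}$ blow up as $\chi\to\infty$, and a Moser iteration needs a fixed gain with controlled constants, which is exactly what the Troisi inequality supplies. With your current Step~3 there is no reverse-H\"older inequality to iterate, so Step~4 cannot get off the ground.
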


\begin{teoB}[Higher dimensional case]
\label{teo:B}
Let $N\ge 2$ and $p\ge 4$. Let $f\in W^{1,\infty}_{loc}(\Omega)$. Then every local minimizer $U\in W^{1,p}_{loc}(\Omega)$ of the functional $\mathfrak{F}$ is a locally Lipschitz continuous function.
\end{teoB}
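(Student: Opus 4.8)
The plan is to derive Theorem B from a uniform a priori gradient bound on a regularized problem. First I would replace the integrand $F(z)=\sum_{i=1}^N (|z_i|-\delta_i)_+^p/p$ by a sequence of smooth, strictly convex, orthotropic integrands $F_\varepsilon(z)=\sum_{i=1}^N g_{i,\varepsilon}(z_i)$, where $g_{i,\varepsilon}$ coincides with $(|t|-\delta_i)_+^p/p$ for $|t|$ large, satisfies $g_{i,\varepsilon}''(t)\ge (p-1)\,(|t|-\delta_i)_+^{p-2}$ and $g_{i,\varepsilon}''(t)\le C\,(1+|t|)^{p-2}$ uniformly in $\varepsilon$, is uniformly convex for each fixed $\varepsilon>0$, and satisfies $g_{i,\varepsilon}''\to (p-1)(|\cdot|-\delta_i)_+^{p-2}$ as $\varepsilon\to 0$. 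By the standard regularity theory for nondegenerate functionals with $p$-growth, and since $f\in W^{1,\infty}_{loc}$, the local minimizer $u_\varepsilon$ of $\mathfrak F_\varepsilon$ is of class $W^{2,2}_{loc}\cap C^{1,\alpha}_{loc}$, and a comparison argument gives $u_\varepsilon\to U$ in $W^{1,p}_{loc}$ along a subsequence. It therefore suffices to prove, uniformly in $\varepsilon$, an estimate of the form $\|\nabla u_\varepsilon\|_{L^\infty(B_{r/2})}\le C\big(N,p,r,\|\nabla u_\varepsilon\|_{L^p(B_r)},\|f\|_{W^{1,\infty}(B_r)}\big)$, after which lower semicontinuity transfers the bound to $U$, yielding $U\in W^{1,\infty}_{loc}$.

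For the a priori estimate itself, I would differentiate the Euler--Lagrange equation of $\mathfrak F_\varepsilon$ with respect to a coordinate $x_s$, obtaining the linear equation $\sum_i\big(a_i^\varepsilon(u_{x_i})\,(u_{x_s})_{x_i}\big)_{x_i}=f_{x_s}$ with $a_i^\varepsilon=g_{i,\varepsilon}''\ge (p-1)(|u_{x_i}|-\delta_i)_+^{p-2}$ and right-hand side bounded since $f\in W^{1,\infty}_{loc}$. Testing with functions built from the gradient — e.g. $u_{x_s}\,\Phi(\nabla u_\varepsilon)\,\eta^2$ with $\eta$ a cut-off and $\Phi$ a power of $\sum_i(|u_{x_i}|-\delta_i)_+^2$, a convex increasing function that vanishes on the degeneracy set $\{|z_i|\le\delta_i\ \forall i\}$ — and summing over $s$, one is led to Caccioppoli-type inequalities that control $\sum_s\big\|\nabla\big((|u_{x_s}|-\delta_s)_+^{(p+2\beta)/2}\,\eta\big)\big\|_{L^2}^2$ (together with the $\varepsilon$-weighted full second derivatives) in terms of $\|\nabla f\|_{L^\infty}$ and powers of $(|u_{x_i}|-\delta_i)_+$ that are already controlled at the previous stage. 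Because these powers vanish where the equation degenerates, the degeneracy of $a_i^\varepsilon$ does no harm to the ``good'' coercive terms. One then inserts this into the Sobolev inequality and iterates over $\beta$, starting from the uniform bound $\nabla u_\varepsilon\in L^p_{loc}$, to reach $\nabla u_\varepsilon\in L^\infty_{loc}$ uniformly.

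The main obstacle is the clash between the \emph{orthotropic} structure and the \emph{wide} degeneracy: the coefficient $a_i^\varepsilon$ is coercive only in the single direction $x_i$ and only on $\{|u_{x_i}|>\delta_i\}$, whereas applying the Sobolev inequality to $(|u_{x_s}|-\delta_s)_+^{(p+2\beta)/2}$ requires controlling every mixed second derivative $u_{x_s x_j}$ on $\{|u_{x_s}|>\delta_s\}$, including in directions $j$ where the equation supplies no ellipticity at all. The error terms generated when $\Phi$ or $\eta$ is differentiated are not absorbable by a pointwise Young inequality, and must be tamed by combining the equations differentiated in several directions, by a further integration by parts, and by a delicate tracking of the exponents arising at each step of the iteration. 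It is this last point that forces the restriction $p\ge 4$ when $N\ge 3$ — in the planar case of Theorem A one can instead exploit the two-dimensional structure, which both allows $p\ge 2$ and weakens the assumption on $f$ to $f\in W^{1,p'}_{loc}$. A convenient preliminary is a uniform-in-$\varepsilon$ analogue of the Sobolev regularity \eqref{sobolev}, i.e.\ a bound for $(|u_{x_i}|-\delta_i)_+^{p/2}\,u_{x_i}/|u_{x_i}|$ in $W^{1,2}_{loc}$ depending only on the data; this both initializes the iteration at a convenient level and is what makes the $\varepsilon$-dependent remainder terms negligible as $\varepsilon\to0$.

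Once the uniform gradient bound is established, I would let $\varepsilon\to 0$: using $u_\varepsilon\to U$ in $W^{1,p}_{loc}$ (and, along a further subsequence, a.e.\ convergence of the gradients), the bound is inherited by $U$, so that $U\in W^{1,\infty}_{loc}(\Omega)$ and hence $U$ is locally Lipschitz, which is the assertion of Theorem B.
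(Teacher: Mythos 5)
Your outer framework (regularize, prove a uniform a priori Lipschitz bound, pass to the limit) matches the paper's, but the a priori estimate you propose is not the paper's and, as sketched, does not close. You propose a Moser-type scheme: differentiate the Euler--Lagrange equation, test with $u_{x_s}\,\Phi(\nabla u_\varepsilon)\,\eta^2$, extract Caccioppoli inequalities for powers $(|u_{x_s}|-\delta_s)_+^{(p+2\beta)/2}$, apply Sobolev, and iterate in $\beta$. This is exactly the integral technique the paper uses for Theorem~A when $N=2$, and the paper's Remark~\ref{oss:2d?} explains precisely why it fails for $N\ge 3$: to handle the $N-1$ missing directional derivatives one is forced into a H\"older step that produces the term $\left(\int W_1^{\frac{q}{2-q}s}\right)^{(2-q)/q}$ on the right, and for the anisotropic Sobolev exponent $\overline q^*$ to dominate $\frac{q}{2-q}$ one needs $q<\frac{2}{N-1}$, incompatible with $q>1$ as soon as $N\ge 3$. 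You acknowledge the obstacle --- ``the error terms \dots must be tamed by combining the equations differentiated in several directions, by a further integration by parts, and by a delicate tracking of the exponents,'' and ``this last point forces the restriction $p\ge 4$'' --- but you never supply the mechanism. In this form it is a wish, not a proof, and the needed mechanism does not exist within the iteration scheme you outline.

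The paper's actual route for Theorem~B is entirely different: a pointwise Bernstein / maximum-principle argument, not an integral one. Using the $C^3_{loc}$ regularity of $u_\varepsilon$ (Lemma~\ref{lm:approximation}), one writes the differentiated equation with the linear operator $L[\psi]=\sum_i\big[g_{i,\varepsilon}'''(u_{x_i})u_{x_ix_i}\psi_{x_i}+g_{i,\varepsilon}''(u_{x_i})\psi_{x_ix_i}\big]$, computes $L[\zeta|\nabla u|^2+\lambda u^2]$ for a cut-off $\zeta$ and a large parameter $\lambda$, and shows that at an interior maximum of $\zeta|\nabla u|^2+\lambda u^2$ the sign condition $L[\cdot]\le 0$ forces a bound on $|\nabla u(x_0)|$. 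The positive sponge $\mathcal{G}_1=\sum_{i,j}g_{i,\varepsilon}''(u_{x_i})u_{x_jx_i}^2$ absorbs Hessian contributions, while $2\lambda\mathcal{G}_3=2\lambda\sum_i g_{i,\varepsilon}''(u_{x_i})u_{x_i}^2\gtrsim\lambda|\nabla u|^p$ defeats the remaining negative terms once $\lambda$ is large. The condition $p\ge4$ enters at a single concrete spot: in estimating $\mathcal{G}_4=\sum_i g_{i,\varepsilon}'''(u_{x_i})u_{x_ix_i}[2\lambda u u_{x_i}+|\nabla u|^2\zeta_{x_i}]$, a Cauchy--Schwarz step produces $\big(\sum_i(|u_{x_i}|-\delta_i)_+^{p-4}\big)^{1/2}$, and one needs $p-4\ge 0$ to bound $(|u_{x_i}|-\delta_i)_+^{p-4}\le|u_{x_i}|^{p-4}$. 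Also essential is the uniform $L^\infty$ bound $\|u_\varepsilon\|_{L^\infty}\le M$ from Lemma~\ref{lm:approximationII} (this is where $f\in L^\infty_{loc}$, hence $W^{1,\infty}_{loc}$, is used); it controls $\lambda u^2$ and the lower-order part of $\mathcal{G}_4$. None of this --- the maximum principle, the test quantity $\zeta|\nabla u|^2+\lambda u^2$, the role of $M$, the precise origin of $p\ge4$ --- appears in your proposal. A secondary inaccuracy: since $\mathfrak{F}$ is not strictly convex when some $\delta_i>0$, you cannot claim $u_\varepsilon\to U$; Lemma~\ref{lm:convergence} only gives convergence to \emph{some} minimizer $\widetilde u$, and Lemma~\ref{propagationofregularity} is then needed to transfer the Lipschitz bound to $U$.
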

Let us now spend some words about the methods of proofs. The preliminary step in both cases is an approximantion argument. Namely, the functional $\mathfrak{F}$ is replaced by a regularized version $\mathfrak{F}_\varepsilon$, for a small parameter $\varepsilon>0$. This permits to infer the necessary regularity on the solutions $u_\varepsilon$ of the regularized problem, in order to justify the manipulations needed to obtain a priori Lipschitz estimates uniform in $\varepsilon$.
Then one aims at taking these estimates to the limit as $\varepsilon$ goes to $0$. However, one should pay attention to the fact that $\mathfrak{F}$ is not strictly convex when at least one $\delta_i\not =0$. Thus a sequence of solutions $u_\varepsilon$ may not necessarily converge to the selected local minimizer. In \cite{BraCar} a penalization argument was used to fix this issue. Here on the contrary, we use a simpler argument, based on the fact that the lack of strict convexity of $t\mapsto (|t|-\delta_i)_+^p$ is ``confined'' (see Lemma \ref{propagationofregularity}).
\vskip.2cm
The core of the proof of Theorem A is the a priori Lipschitz estimate of Proposition \ref{lm:2d}. Such an estimate is achieved by means of a Moser's iteration technique applied to the equation solved by the partial derivatives $u_{x_j}$ of the local minimizer. More precisely, we look at power-type subsolutions  of this equation, i.e. quantities like $|u_{x_j}|^s$ for $s\ge 1$.
This is a standard strategy for equations having a $p-$Laplacian-type structure, but as already said our operator does not have such a structure and this entails several additional difficulties.
\par
As explained in the introduction of \cite{BraCar}, the main difficulty of this method is that the Caccioppoli inequality we get for $|u_{x_j}|^s$ is quite involved.
Indeed, due to the particular structure of $D^2 F$, in principle we have a control only on a ``weighted'' norm of $\nabla |u_{x_j}|^s$, the weights being dependent on {\it all the other components $u_{x_i}$ of the gradient} (see Lemma \ref{lm:caccioweight} below). Roughly speaking, what we control in the Caccioppoli inequality is a quantity like
\[
\sum_{i=1}^N \int |u_{x_i}|^{p-2}\, \left|\left(|u_{x_j}|^{s+1}\right)_{x_i}\right|^2.
\]
For the diagonal term, i.e. when $i=j$, we can combine the $x_j-$derivative of $u_{x_j}$ with the weigth $|u_{x_j}|^{p-2}$ and simply recognize the $x_j-$derivative of yet another power of $u_{x_j}$. Since we would like to have a control on the full gradient of such a power of $u_{x_j}$, we still miss all the $x_i-$derivatives ($i\not=j$) of this function. To overcome this difficulty, we use in a crucial way the Sobolev property \eqref{sobolev} together with H\"older inequality, in order to ``cook-up'' suitable Caccioppoli inequalities for all these {\it missing terms}. Surprisingly enough, even if the functional $\mathfrak{F}$ has $p-$growth in every direction, we rely on the {\it anisotropic Sobolev inequality} due to Troisi (see \cite{Tr}) in order to produce an iterative scheme of reverse H\"older inequalities.
This procedure works for $N=2$, but it seems to be limited just to the two dimensional case (see Remark \ref{oss:2d?} below).
\vskip.2cm
In contrast Theorem B is valid in every dimension, but we need the restriction $p\ge 4$. 
This second result partially superposes with the already mentioned \cite[Theorem 1]{UU} by Uralt'seva and Urdaletova. However, it should be noticed that the monotonicity assumptions on the operator\footnote{See equation (8) of the paper \cite{UU}.} made in \cite{UU} does not allow for $\delta_i>0$. Moreover, the result in \cite{UU} is stated for $p>3$, but a careful inspection of the proof reveals that the same condition $p\ge 4$ is needed there as well\footnote{This comes from hypothesis (5) in \cite{UU}. Also observe that this condition contains a small typo, $m_{i-2}$ should be replaced by $m_i-2$.}. 
\par
Both the proofs of Theorem B and that of \cite[Theorem 1]{UU} are based on a priori Lipschitz bounds, obtained by means of pointwise estimates in the vein of Bernstein method.
However, computations are not the same and we believe ours to be slightly simpler. In \cite{UU} the first step is to look at the equation solved by a {\it concave} power of $u$, given by the function
\[
w=(u+\|u\|_{L^\infty}+1)^\gamma,\qquad 0<\gamma<1.
\] 
Then they consider the equation solved by (some function of) $\nabla w$. There is an extra term in this new equation coming from the concave power which crucially leads to the result.
\par 
Here on the contrary we obtain the Lipschitz estimate by directly attacking equation \eqref{equazione}. The main point is to consider the equation satisfied by the quantity 
\[
|\nabla u|^2+\lambda\, u^2,
\]
for a suitably large paramater $\lambda$. We notice that this is exactly the same test function used to prove classical gradient estimates for {\it linear} uniformly elliptic equations (see for example \cite[Proposition 2.19]{HL}).  
\vskip.2cm
One of the drawbacks of these two strategies is the assumption on $f$, which does not seem to be optimal. Indeed, we expect the result to be true under the natural hypothesis $f\in L^{q}_{loc}(\Omega)$ with $q>N$.

\subsection{Plan of the paper} In Section \ref{sec:preliminaries} we set notations and preliminary results needed throughout the whole paper. In particular, we introduce there a regularized version of the problem which will be useful in order to get the desired Lipschitz estimate.
Then Section \ref{sec:caccioppoli} is devoted to prove some Caccioppoli-type inequalities for the gradient of the solution of the regularized problem. The proof of Theorem A is contained in \ref{sec:2d}, while Section \ref{sec:d} contains the proof of Theorem B. Two appendices containing some technical results complement the paper.

\begin{ack}
%The authors gratefully acknowledge useful conversations with Giovanni Cupini, Guido De Philippis, Nicola Fusco, Tuomo Kuusi, Paolo Marcellini and Giuseppe Mingione. 
A quick but stimulating discussion with Nina Uralt'seva in June 2012 led to a better understanding of the paper \cite{UU}, we thank her. Guillaume Carlier is warmly thanked for his interest in this work. Part of this paper has been written during the conferences ``{\it Journ\'ees d'Analyse Appliqu\'ee Nice-Toulon-Marseille}'' held in Porquerolles in May 2014, ``{\it Nonlinear partial differential equations and stochastic methods}'' held in Jyv\"askyl\"a in June 2014 and ``{\it Existence and Regularity for Nonlinear Systems of Partial Differential Equations}'' held in Pisa in July 2014. Organizers and hosting institutions are gratefully acknowledged.
\end{ack}

\section{Preliminaries}
\label{sec:preliminaries}

\subsection{Definitions and basic results}
Let $\Omega\subset\mathbb{R}^N$ be an open set and $p\ge 2$. In what follows we set for simplicity
\[
g_i(t)=\frac{1}{p}\, (|t|-\delta_i)^p_+,\qquad t\in\mathbb{R},\ i=1,\dots,N,
\]
where $0\le \delta_1,\dots,\delta_N$ are given real numbers. We will also define
\begin{equation}
\label{delta}
\delta=1+\max\{\delta_i\, :\, i=1,\dots,N\}.
\end{equation}
\begin{oss}[Smoothness of $g_i$]
When \(p\) is an integer and $\delta_i>0$, \(g_i\) is of class \(C^{p-1,1}\). When \(p\not\in \mathbb{N}\), then \(g_i\in C^{[p], p-[p]}(\mathbb{R})\)  where \([\,\cdot\,]\) denotes the integer part. 
\end{oss}
\begin{oss}[The limit case $p=2$]
Observe that for $p=2$ and $\delta_i>0$, we have $g_i\in C^{1,1}(\mathbb{R})\cap C^\infty(\mathbb{R}\setminus\{1,-1\})$, but $g_i\not\in C^2(\mathbb{R})$. In this case, like in \cite{BraCar} a smoothing around $|t|=\delta_i$ would be necessary, notably for the result of Lemma \ref{lm:approximation} below. However, in order not to overburden the presentation, for the sequel we will assume for simplicity $p>2$ (see \cite[Section 2]{BraCar} for more details).
\end{oss}

We are interested in local minimizers of the following variational integral
\begin{equation}
\label{F}
\mathfrak{F}(u;\Omega')=\sum_{i=1}^N \int_{\Omega'} g_i(u_{x_i})\, dx+\int_{\Omega'} f\, u\, dx,\qquad u\in W^{1,p}_{loc}(\Omega),
\end{equation}
where $f\in L^{p'}_{loc}(\Omega)$ and $\Omega'\Subset\Omega$. We recall that $u\in W^{1,p}_{loc}(\Omega)$ is said to be a {\it local minimizer} of $\mathfrak{F}$ if for every $\Omega'\Subset\Omega$ we have
\[
\mathfrak{F}(u;\Omega')\le \mathfrak{F}(u+\varphi;\Omega'),\qquad \mbox{ for every }\ \varphi\in W^{1,p}_0(\Omega').
\]
We first observe that $\mathfrak{F}$ is not strictly convex, unless $\delta=1$.
Thus minimizers are not unique in general. The following result guarantees that it will be sufficient to prove the desired result for one minimizer.
\begin{lm}[Propagation of regularity]
\label{propagationofregularity}
Let $B\Subset\Omega$ be a ball and $\varphi\in W^{1,p}(B)$. Let $u_1,u_2\in W^{1,p}(\Omega)$ be two solutions of
\begin{equation}
\label{mininitial}
\min\left\{\mathfrak{F}(v;B)\, :\, v-\varphi\in W^{1,p}_0(B)\right\}.
\end{equation}
Then it holds
\begin{equation}
\label{stimina}
\Big||(u_1)_{x_i}|-|(u_2)_{x_i}|\Big|\le 2\,\delta_i,\qquad \mbox{ a.\,e. in } B,\ i=1,\dots,N.
\end{equation}
In particular, if a minimizer of \eqref{mininitial} is (locally) Lipschitz, then this remains true for all the other minimizers.
\end{lm}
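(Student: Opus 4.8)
The plan is to exploit convexity of $\mathfrak{F}$ together with the structure of the functions $g_i$, whose only defect of strict convexity occurs on the interval $[-\delta_i,\delta_i]$ where $g_i$ vanishes identically. The starting point is the standard observation that if $u_1,u_2$ both solve the minimization problem \eqref{mininitial}, then so does the midpoint $v=(u_1+u_2)/2$ (since $\mathfrak{F}(\,\cdot\,;B)$ is convex and $v-\varphi\in W^{1,p}_0(B)$), and moreover
\[
\mathfrak{F}\!\left(\tfrac{u_1+u_2}{2};B\right)=\frac12\,\mathfrak{F}(u_1;B)+\frac12\,\mathfrak{F}(u_2;B).
\]
Because the linear term $\int_B f\,u\,dx$ is affine, this forces equality in the convexity inequality for the nonlinear part, i.e.
\[
\sum_{i=1}^N\int_B g_i\!\left(\frac{(u_1)_{x_i}+(u_2)_{x_i}}{2}\right)dx=\sum_{i=1}^N\int_B\frac{g_i((u_1)_{x_i})+g_i((u_2)_{x_i})}{2}\,dx,
\]
and since each $g_i$ is convex, each integrand must satisfy equality pointwise a.e.\ in $B$:
\[
g_i\!\left(\frac{(u_1)_{x_i}+(u_2)_{x_i}}{2}\right)=\frac{g_i((u_1)_{x_i})+g_i((u_2)_{x_i})}{2},\qquad i=1,\dots,N.
\]

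The second step is purely one-dimensional: I claim that if $a,b\in\mathbb{R}$ satisfy $g_i((a+b)/2)=(g_i(a)+g_i(b))/2$, then $\big||a|-|b|\big|\le 2\delta_i$. This is where the specific shape of $g_i(t)=\frac1p(|t|-\delta_i)_+^p$ enters. On each of the two half-lines $[\delta_i,\infty)$ and $(-\infty,-\delta_i]$ the function $t\mapsto(|t|-\delta_i)_+^p$ is strictly convex (its second derivative $(p-1)(|t|-\delta_i)^{p-2}$ is positive there, using $p>2$), and it vanishes on $[-\delta_i,\delta_i]$. Equality in the convexity inequality for $g_i$ therefore means the segment joining $(a,g_i(a))$ to $(b,g_i(b))$ lies on the graph; by strict convexity on each half-line this is possible only if the segment is ``flat'' in the sense that it does not genuinely cross a region of strict convexity, which in turn forces $a$ and $b$ to lie in an interval of the form $[-\delta_i,c]$ or $[c,\delta_i]$ on which $g_i$ is affine — but $g_i$ is affine only on subintervals of $[-\delta_i,\delta_i]$. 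Hence both $a$ and $b$ must belong to $[-\delta_i,\delta_i]$, or else $a=b$. In either case $\big||a|-|b|\big|\le 2\delta_i$, which is \eqref{stimina}. (A cleaner way to phrase the one-dimensional lemma: writing $G(t)=(|t|-\delta_i)_+^p$, a case analysis on the positions of $a,b$ relative to $\pm\delta_i$ shows that equality $G((a+b)/2)=(G(a)+G(b))/2$ with $G$ strictly convex off $[-\delta_i,\delta_i]$ and $\equiv 0$ on it is impossible unless $a,b\in[-\delta_i,\delta_i]$ or $a=b$.)

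Finally, the ``in particular'' clause follows by a localization argument. Suppose $u_1$ is a local minimizer of $\mathfrak{F}$ on $\Omega$ that is locally Lipschitz, and let $u_2$ be any other local minimizer. For each ball $B\Subset\Omega$, both $u_1$ and $u_2$ solve \eqref{mininitial} with boundary datum $\varphi$ equal to their own trace — but to compare them we note that $u_1$ and $u_2$ need not share boundary data on $B$. This is handled as in \cite{BraCar}: one shows that the bound \eqref{stimina} is actually a consequence of comparing each $u_j$ with the midpoint of $u_1$ and $u_2$ restricted to appropriate subdomains, or more simply, one observes that \eqref{stimina} is the statement that needs to be applied to $u_1$ and $u_2$ viewed as minimizers with \emph{their own} boundary data on the \emph{same} ball, which does follow from the convexity argument above once one checks that two local minimizers on $\Omega$ remain minimizers of \eqref{mininitial} for a common $\varphi$ — and indeed they do not in general, so the correct statement is that $\big||(u_1)_{x_i}|-|(u_2)_{x_i}|\big|\le 2\delta_i$ holds because on every ball $B$, $u_2$ solves \eqref{mininitial} with $\varphi=u_2|_{\partial B}$, while $u_1$ solves it with $\varphi=u_1|_{\partial B}$; applying the midpoint argument to the pair $(u_1,\tilde u_2)$ where $\tilde u_2$ is the minimizer of $\mathfrak{F}(\,\cdot\,;B)$ with $u_1$'s boundary data, together with a uniqueness-up-to-the-flat-region argument for $\tilde u_2$ versus $u_2$, yields the claim. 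Once \eqref{stimina} holds, $|(u_2)_{x_i}|\le|(u_1)_{x_i}|+2\delta_i\le L_i+2\delta_i$ locally, so $u_2$ has locally bounded gradient, hence is locally Lipschitz.

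The main obstacle I anticipate is the precise formulation and proof of the one-dimensional equality case — namely pinning down that pointwise equality in Jensen/convexity for $g_i$ forces both arguments into $[-\delta_i,\delta_i]$ (up to the trivial case $a=b$) — and, relatedly, correctly handling the fact that two distinct local minimizers of $\mathfrak{F}$ on $\Omega$ do not automatically solve the \emph{same} fixed-boundary problem \eqref{mininitial}, so that the midpoint trick must be applied with some care (comparing each minimizer to an auxiliary minimizer sharing the other's boundary data). The strict convexity of $t\mapsto(|t|-\delta_i)_+^p$ on each half-line away from $[-\delta_i,\delta_i]$, which requires $p>2$, is the key structural input and matches the standing assumption $p>2$ adopted just before the statement.
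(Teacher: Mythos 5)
Your main argument (first two paragraphs) is correct and is essentially the paper's proof: the paper takes a general convex combination $u_s=(1-s)u_1+su_2$, argues by contradiction on a set of positive measure, and invokes Lemma~\ref{lm:convessofuori}, which is exactly the contrapositive of your one-dimensional equality case (equality in the midpoint Jensen inequality forces $g_i$ to be affine on the segment $[\min(a,b),\max(a,b)]$, hence $a,b\in[-\delta_i,\delta_i]$ or $a=b$). Your direct equality argument and the paper's contradiction argument carry identical content, and the choice of $s=1/2$ versus general $s\in(0,1)$ is immaterial for a convex function. One small remark: you do not actually need the stronger conclusion that both $a,b\in[-\delta_i,\delta_i]$; the weaker statement $|a-b|\le 2\delta_i$ (as in Lemma~\ref{lm:convessofuori}) already gives $\big||a|-|b|\big|\le|a-b|\le 2\delta_i$ by the reverse triangle inequality.

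Your third paragraph, however, is based on a misreading of the statement. The lemma assumes $u_1$ and $u_2$ are \emph{both} solutions of the same Dirichlet problem \eqref{mininitial} on the same ball $B$ with the same boundary datum $\varphi$; they are not arbitrary local minimizers of $\mathfrak{F}$ on $\Omega$. Consequently the entire discussion of mismatched boundary data, auxiliary minimizers $\tilde u_2$, and localization is a phantom difficulty you have introduced: once \eqref{stimina} is proved for two solutions of the same problem \eqref{mininitial}, the ``in particular'' clause is immediate exactly as in your last sentence, because $|(u_2)_{x_i}|\le|(u_1)_{x_i}|+2\delta_i$ a.e.\ in $B$. (The way the lemma is actually used in Section~\ref{sec:2d} is that the limit $\widetilde u$ of the regularized minimizers $u_\varepsilon$ and the original local minimizer $U$ both solve \eqref{local}, i.e.\ the same fixed-boundary problem with datum $U$, so the lemma applies directly with $\varphi=U$.)
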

\begin{proof}
Let us suppose that \eqref{stimina} is not true. Then there exists $i_0\in\{1,\dots,N\}$ such that
\[
E_{i_0}:=\left\{x\in B\, :\, \Big||(u_1)_{x_{i_0}}|-|(u_2)_{x_{i_0}}|\Big|> 2\, \delta_i\right\},
\]
has strictly positive measure. We then set $u_s=(1-s)\, u_0+s\, u_1$ for some $s\in(0,1)$ and observe that this is admissible in \eqref{mininitial}. In view of Lemma \ref{lm:convessofuori} in Appendix A,
\[
g_{i_0}\left((1-s)\,(u_1)_{x_{i_0}}+s\, (u_2)_{x_{i_0}}\right)<(1-s)\,g_{i_0}((u_1)_{x_{i_0}})+s\,g_{i_0}((u_2)_{x_{i_0}}),\quad \mbox{ a.\,e. in} E_{i_0}.
\]
Thus we get 
\[
\mathfrak{F}(u_s)<(1-s)\,\mathfrak{F}(u_1)+s\,\mathfrak{F}(u_2)=\mathfrak{F}(u_1)=\mathfrak{F}(u_2),
\]
which gives the desired contradiction.
\end{proof}
We will also need the following regularity result, which is reminiscent of \cite{St}. A more general result of this type can be found in \cite{BB}.
\begin{teo}[\cite{BC}]
\label{teo:boucla}
Let $B\subset\mathbb{R}^N$ be a ball, $\varphi\in C^2(\overline B)$ and $f\in L^\infty(B)$. Let $u\in W^{1,1}(B)\cap L^\infty(B)$ be a solution of
\[
\min\left\{\int_B H(\nabla v)\, dx+\int_B f\, v\, dx\, :\, v-\varphi\in W^{1,1}_0(B)\right\},
\]
where $H:\mathbb{R}^N\to [0,\infty)$ is a $C^2$ convex function such that  for some \(\mu>0\)
\begin{equation}
\label{dessous}
\langle D^2\, H(z)\, \xi,\xi\rangle\ge \mu\, |\xi|^2,\qquad \xi,z\in\mathbb{R}^N.
\end{equation}
Then $u\in W^{1,\infty}_{loc}(B)$. 
\end{teo}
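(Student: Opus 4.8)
The plan is to reduce the statement to an a priori Lipschitz estimate for smooth minimizers of a regularized problem, and then pass to the limit. Since \eqref{dessous} says precisely that $H-\tfrac{\mu}{2}|z|^2$ is convex, $H$ is $\mu$-strongly convex, so the minimization problem in the statement has a \emph{unique} solution, namely the given $u$. First I would replace $H$ by a convex $H_\varepsilon$ which coincides with $H$ on $\{|z|\le 1/\varepsilon\}$, has standard $p$-growth for large $|z|$, and still satisfies $\mu\,\mathrm{Id}\le D^2 H_\varepsilon\le \Lambda_\varepsilon\,\mathrm{Id}$; I would likewise mollify $f$ into $f_\varepsilon$ with $\|f_\varepsilon\|_{L^\infty}\le\|f\|_{L^\infty}$ and $f_\varepsilon\to f$ in $L^q_{loc}$ for every $q<\infty$. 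By the classical theory for uniformly elliptic, controlled-growth functionals, the minimizer $u_\varepsilon$ (with boundary datum $\varphi$) is smooth in $\overline B$; by uniqueness $u_\varepsilon\to u$ as $\varepsilon\to 0$. Hence it suffices to bound $\|\nabla u_\varepsilon\|_{L^\infty(B'')}$ for $B''\Subset B$ independently of $\varepsilon$.

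The next step is a boundary gradient estimate. Since $B$ is a ball and $\varphi\in C^2(\overline B)$, $\varphi$ satisfies the bounded slope condition: for every $x_0\in\partial B$ there are affine $\ell^{\pm}_{x_0}$ with $\ell^-_{x_0}\le\varphi\le\ell^+_{x_0}$ on $\partial B$, $\ell^{\pm}_{x_0}(x_0)=\varphi(x_0)$, and $|\nabla\ell^{\pm}_{x_0}|\le Q=Q(\|\varphi\|_{C^2},B)$. Writing $B=B_R(0)$ and $T(x)=\tfrac{R^2-|x|^2}{2N}$, the functions $w^{\pm}:=\ell^{\pm}_{x_0}\pm\tfrac{\|f\|_{L^\infty}}{\mu}\,T$ are, respectively, a classical super- and subsolution of $\mathrm{div}(DH_\varepsilon(\nabla v))=f_\varepsilon$: indeed $D^2 w^{\pm}=\mp\tfrac{\|f\|_{L^\infty}}{\mu N}\,\mathrm{Id}$, so $\mathrm{div}(DH_\varepsilon(\nabla w^{+}))=-\tfrac{\|f\|_{L^\infty}}{\mu N}\,\mathrm{tr}\,D^2H_\varepsilon(\nabla w^{+})\le-\|f\|_{L^\infty}\le f_\varepsilon$, using only the lower ellipticity bound (and symmetrically for $w^-$). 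As $w^-\le u_\varepsilon\le w^+$ on $\partial B$, the comparison principle gives $w^-\le u_\varepsilon\le w^+$ in $B$; evaluating at $x_0$ (where $T=0$ and $\ell^{\pm}_{x_0}=\varphi=u_\varepsilon$) and using $R^2-|x|^2\le 2R\,|x-x_0|$, we obtain
\[
|u_\varepsilon(x)-u_\varepsilon(x_0)|\le L\,|x-x_0|,\qquad x\in\overline B,\ x_0\in\partial B,
\]
with $L=Q+\tfrac{R\|f\|_{L^\infty}}{\mu N}$ independent of $\varepsilon$.

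The interior estimate I would obtain by comparing $u_\varepsilon$ with its translates. Fix a small $h$ and set $D=B\cap(B-h)$. Using the boundary estimate on the portion of $\partial D$ lying on $\partial B$, one checks that $u_\varepsilon(\cdot)-u_\varepsilon(\cdot+h)\le L|h|$ on $\partial D$. Testing the Euler--Lagrange equations of $u_\varepsilon$ and of $x\mapsto u_\varepsilon(x+h)$ against the admissible function $\psi:=(u_\varepsilon-u_\varepsilon(\cdot+h)-L|h|)_+\in W^{1,2}_0(D)$, using the monotonicity of $DH_\varepsilon$ (ellipticity $\ge\mu$) on the left-hand side and the elementary bound $\bigl|\int_D\bigl(f_\varepsilon-f_\varepsilon(\cdot+h)\bigr)\psi\bigr|=\bigl|\int_D f_\varepsilon\bigl(\psi-\psi(\cdot-h)\bigr)\bigr|\le\|f\|_{L^\infty}\,|h|\,\|\nabla\psi\|_{L^1}$ on the right, yields $\|\nabla(u_\varepsilon-u_\varepsilon(\cdot+h))_+\|_{L^2(D')}\le C|h|$, and symmetrically; i.e.\ the difference quotients of $u_\varepsilon$ are bounded in $W^{1,2}_{loc}$ uniformly in $\varepsilon$ and in $h$. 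Feeding this improved Sobolev integrability back into the same scheme, in the spirit of Stampacchia \cite{St} (a finite Nirenberg-type bootstrap, localized by means of the boundary estimate so that the datum never reappears), upgrades it to a uniform bound $\|\nabla u_\varepsilon\|_{L^\infty(B'')}\le C\bigl(B'',B,\|\varphi\|_{C^2},\|f\|_{L^\infty},\mu\bigr)$. Letting $\varepsilon\to 0$ gives $u\in W^{1,\infty}_{loc}(B)$.

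The main obstacle is the complete absence of an upper growth bound on $H$: $DH(\nabla u)$ need not even be locally integrable, so one cannot argue directly on the Euler--Lagrange equation for $u$ and must run the whole argument at the level of the regularized problems, with every constant independent of $\varepsilon$, relying on variational comparison (truncation/lattice) arguments rather than on PDE estimates. A secondary but genuine difficulty is that $f$ lies only in $L^\infty$: since $\|f(\cdot+h)-f\|_{L^\infty}$ does not tend to $0$, the translation comparison must be set up (via the shift of the test function above, and via the torsion-type corrector $T$ used for the barriers) so as to still extract an $O(|h|)$ gain, which is exactly the point that forces the bootstrap instead of a one-shot estimate.
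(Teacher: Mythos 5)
The paper itself does not prove this result; it is quoted as a citation to Bousquet--Clarke \cite{BC}, so there is no in-paper proof to compare against. Judged as a blind reconstruction, your framework is the right one (regularization with uniform lower ellipticity, boundary gradient estimate via the bounded slope condition and torsion-type barriers, then propagation inward by comparison with translates), and the two genuinely clever steps are done correctly: the barriers $w^{\pm}=\ell^{\pm}_{x_0}\pm\tfrac{\|f\|_{\infty}}{\mu}\,T$ use only the lower bound on $D^2H_\varepsilon$, and the rewriting $\int_D(f_\varepsilon-f_\varepsilon(\cdot+h))\psi=\int f_\varepsilon(\psi-\psi(\cdot-h))$ is exactly what extracts an $O(|h|)$ factor from a merely bounded $f$ while avoiding any upper ellipticity bound.

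The gap is in what the truncated test function gives you and in how you close from there. Testing with $\psi_L:=(u_\varepsilon-u_\varepsilon(\cdot+h)-L|h|)_+$ yields $\|\nabla\psi_L\|_{L^2(D)}\le C|h|$; this is a statement about the super-level set $A_L=\{u_\varepsilon-u_\varepsilon(\cdot+h)>L|h|\}$, \emph{not} a bound on $\nabla(u_\varepsilon-u_\varepsilon(\cdot+h))_+$ and not a $W^{1,2}$ bound on the full difference quotient: on $\{|u_\varepsilon-u_\varepsilon(\cdot+h)|\le L|h|\}$ you have no control at all. So the claim ``$u_\varepsilon\in W^{2,2}_{loc}$ uniformly'' does not follow, and there is no Nirenberg/Sobolev bootstrap available either: any Caccioppoli inequality for second derivatives would need the upper ellipticity constant $\Lambda_\varepsilon$, which blows up as $\varepsilon\to 0$. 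The correct way to close is a De~Giorgi iteration in the truncation level $L$: from $\mu\|\nabla\psi_L\|_{L^2}^2\le\|f\|_\infty|h|\,\|\nabla\psi_L\|_{L^1}\le\|f\|_\infty|h|\,|A_L|^{1/2}\|\nabla\psi_L\|_{L^2}$ and the Sobolev inequality $\|\psi_L\|_{L^{2^*}}\le C\|\nabla\psi_L\|_{L^2}$, one gets for $L'>L\ge L_*$
\[
(L'-L)\,|A_{L'}|^{1/2^*}\le C\,\frac{\|f\|_\infty}{\mu}\,|A_L|^{1/2},
\]
and since $2^*/2>1$ the standard iteration lemma gives $|A_{L_0}|=0$ for an explicit $L_0=L_*+c(N,\mu,\|f\|_\infty,|B|)$, i.e.\ the pointwise bound $|u_\varepsilon(x)-u_\varepsilon(x+h)|\le L_0|h|$ uniformly in $\varepsilon$ and $h$. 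That \emph{is} the Lipschitz estimate; no bootstrap from $W^{2,2}$ is needed or possible here. A minor further point: $u_\varepsilon\to u$ does not follow from uniqueness alone, since both $H_\varepsilon$ and $f_\varepsilon$ vary with $\varepsilon$; one should add a brief $\Gamma$-convergence or direct energy-comparison argument (painless under $\mu$-strong convexity, but worth stating).
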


\subsection{Approximation scheme}

We now introduce a regularized version of the original problem. We set
\begin{equation}
\label{gepsilon}
g_{i.\varepsilon}(t)=g_i(t)+\frac{\varepsilon}{2}\, t^2=\frac{1}{p}\, (|t|-\delta_i)_+^{p}+\frac{\varepsilon}{2}\, |t|^2,\qquad t\in\mathbb{R}.
\end{equation}
Let $U$ be a local minimizer of $\mathfrak{F}$. We also fix a ball 
\[
B \Subset \Omega\quad \mbox{ such that }\quad 2\,B\Subset\Omega \mbox{ as well}.
\] 
Here $2\,B$ denotes the ball having the same center as $B$ scaled by a factor $2$.
\par
For every $0<\varepsilon\ll 1$ and every \(x\in \overline{B}\), we set $U_\varepsilon(x)=U\ast \varrho_\varepsilon(x)$, where $\varrho_\varepsilon$ is a smooth convolution kernel, supported in a ball of radius $\varepsilon$ centered at the origin. 
\par
Then by definition of $U_\varepsilon$ there exists $0<\varepsilon_0<1$ such that for every $0<\varepsilon\le \varepsilon_0$
\begin{equation}
\label{Uepsilon}
\|U_\varepsilon\|_{W^{1,p}(B)}= \|\nabla U_\varepsilon\|_{L^p(B)}+\|U_\varepsilon\|_{L^p(B)}\le \|\nabla U\|_{L^p(2\,B)}+\|U\|_{L^p(2\,B)}=:C_1.
\end{equation}
Finally, we define
\[
\mathfrak{F}_\varepsilon(v;B)=\sum_{i=1}^N \int_B g_{i,\varepsilon}(v_{x_i})\, dx+\int_B f_{\varepsilon}\, v\, dx,
\]
where \(f_{\varepsilon}=f\ast\varrho_{\varepsilon}\). The following preliminary result is standard.
\begin{lm}[Basic energy estimate]
There exists a  unique solution $u_\varepsilon$ to the problem 
\begin{equation}
\label{approximated}
\min\left\{\mathfrak{F}_\varepsilon(v;B)\, :\, v-U_\varepsilon\in W^{1,p}_0(B)\right\}.
\end{equation}
The following uniform energy estimate holds
\begin{equation}
\label{uniformeg}
\int_B |\nabla u_\varepsilon|^p\, dx\le C_2,
\end{equation}
for some constant $C_2=C_2(N,\delta,p,|B|,C_1,\|f\|_{L^{p'}(2\,B)})>0$. 
\end{lm}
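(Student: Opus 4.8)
\emph{Strategy.} The plan is to get existence and uniqueness of $u_\varepsilon$ from the direct method combined with strict convexity, and to get the uniform bound \eqref{uniformeg} by testing the minimality of $u_\varepsilon$ against the explicit competitor $U_\varepsilon$. Everything rests on the following elementary two--sided control of the regularized integrand \eqref{gepsilon}: since we assumed $p>2$ and $0<\varepsilon\le\varepsilon_0<1$, for every $t\in\mathbb{R}$ and every $i$ one has
\[
\frac{2^{1-p}}{p}\,|t|^p-\frac{\delta^p}{p}\ \le\ g_{i,\varepsilon}(t)\ \le\ C(p)\,\bigl(|t|^p+1\bigr),
\]
where $\delta$ is as in \eqref{delta}; the lower bound follows from $|t|^p\le 2^{p-1}\bigl((|t|-\delta_i)_+^p+\delta_i^p\bigr)$ and the upper bound from $t^2\le |t|^p+1$. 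Moreover each $g_{i,\varepsilon}$ is convex, and strictly convex because of the term $\tfrac{\varepsilon}{2}t^2$, so that $v\mapsto\sum_i\int_B g_{i,\varepsilon}(v_{x_i})\,dx$ is sequentially weakly lower semicontinuous on $W^{1,p}(B)$ and $\mathfrak{F}_\varepsilon(\,\cdot\,;B)$ is strictly convex on the affine class $U_\varepsilon+W^{1,p}_0(B)$.

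\emph{Existence and uniqueness.} Using the lower bound on $g_{i,\varepsilon}$ together with Poincar\'e's inequality on $B$ (applied to $v-U_\varepsilon\in W^{1,p}_0(B)$) and H\"older's inequality to estimate $\int_B f_\varepsilon\,v\,dx$, one checks that $\mathfrak{F}_\varepsilon(v;B)\to+\infty$ as $\|\nabla v\|_{L^p(B)}\to\infty$ inside the admissible class; since $\mathfrak{F}_\varepsilon(U_\varepsilon;B)<\infty$, any minimizing sequence is bounded in $W^{1,p}(B)$. Extracting a weakly convergent subsequence, whose limit $u_\varepsilon$ remains in the weakly closed affine class, and combining weak lower semicontinuity of the gradient term with weak continuity of the linear term produces a minimizer. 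If $u_\varepsilon$ and $\tilde u_\varepsilon$ were two solutions of \eqref{approximated}, then $(u_\varepsilon+\tilde u_\varepsilon)/2$ would be admissible with strictly smaller energy unless $\nabla u_\varepsilon=\nabla\tilde u_\varepsilon$ a.e.\ in $B$, which together with the common boundary datum $U_\varepsilon$ forces $u_\varepsilon=\tilde u_\varepsilon$; this gives uniqueness.

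\emph{Energy estimate.} Since $U_\varepsilon$ itself is admissible in \eqref{approximated}, minimality yields $\mathfrak{F}_\varepsilon(u_\varepsilon;B)\le\mathfrak{F}_\varepsilon(U_\varepsilon;B)$. By the upper bound on $g_{i,\varepsilon}$, H\"older's inequality and \eqref{Uepsilon}, the right--hand side is bounded by a constant $C_3$ depending only on $N,p,|B|,C_1$ and $\|f\|_{L^{p'}(2B)}$. For the left--hand side, the lower bound on $g_{i,\varepsilon}$ gives
\[
\sum_{i=1}^N\int_B g_{i,\varepsilon}\bigl((u_\varepsilon)_{x_i}\bigr)\,dx\ \ge\ \frac{2^{1-p}}{p}\,\|\nabla u_\varepsilon\|_{L^p(B)}^p-C(N,\delta,p,|B|),
\]
whereas H\"older's inequality, Poincar\'e's inequality applied to $u_\varepsilon-U_\varepsilon$ and \eqref{Uepsilon} give
$\bigl|\int_B f_\varepsilon\,u_\varepsilon\,dx\bigr|\le \|f\|_{L^{p'}(2B)}\bigl(C_P\,\|\nabla u_\varepsilon\|_{L^p(B)}+C(C_P,C_1)\bigr)$,
with $C_P$ the Poincar\'e constant of $B$. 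Writing $X=\|\nabla u_\varepsilon\|_{L^p(B)}$, these estimates combine into an inequality of the form $c\,X^p\le A+B\,X$, with $c,A,B>0$ depending only on the quantities listed in the statement; absorbing $B\,X$ into $c\,X^p$ by Young's inequality (allowed since $p\ge 2$) yields $X^p\le C_2$, which is exactly \eqref{uniformeg}.

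\emph{Main point.} There is no genuine difficulty here; the only things that require a little care are the uniformity in $\varepsilon$ of all the constants---ensured by $\varepsilon\le\varepsilon_0<1$ and, above all, by \eqref{Uepsilon}, which bounds $U_\varepsilon$ independently of $\varepsilon$---and the systematic use of Poincar\'e's inequality on the zero--trace function $u_\varepsilon-U_\varepsilon$ rather than on $u_\varepsilon$.
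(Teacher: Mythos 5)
Your argument is correct and follows essentially the same route as the paper: direct method plus strict convexity for existence and uniqueness, then testing minimality against $U_\varepsilon$ and using the two-sided bound on $g_{i,\varepsilon}$ (the paper's \eqref{gi}), with H\"older and Poincar\'e on $u_\varepsilon-U_\varepsilon$ to absorb the linear term. You only spell out the ``standard computations'' that the paper leaves implicit; as a minor aside, the absorption of $B\,X$ into $c\,X^p$ by Young's inequality needs only $p>1$, not $p\ge 2$.
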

\begin{proof}
We start by observing that a solution $u_\varepsilon$ exists, by a standard application of the Direct Methods. Uniqueness then follows from strict convexity of the integrand
\begin{equation}
\label{integrand}
L_\varepsilon(x,u,z)=\sum_{i=1}^N g_i(z_i)+\frac{\varepsilon}{2}\, |z|^2+f_{\varepsilon}(x)\, u,
\end{equation}
in the gradient variable. 
In order to prove \eqref{uniformeg}, we use the minimality of $u_\varepsilon$, which implies \(\mathfrak{F}_{\varepsilon}(u_{\varepsilon};B)\le \mathfrak{F}_{\varepsilon}(U_{\varepsilon};B)\). This gives
\[
\sum_{i=1}^N \int_B g_{i,\varepsilon}((u_{\varepsilon})_{x_i})\,dx \leq \sum_{i=1}^N \int_B g_{i,\varepsilon}((U_{\varepsilon})_{x_i})\,dx + \int_B |f_{\varepsilon}|\,|u_{\varepsilon}-U_{\varepsilon}|\,dx.
\] 
We now use the fact that 
\begin{equation}
\label{gi}
\frac{1}{p}\left( \frac{1}{2^{p-1}}|t|^p -\delta^p\right) \leq g_{i,\varepsilon}(t) \leq \frac{2}{p}|t|^p +\frac{p-2}{2p}.
\end{equation}
The lower bound in \eqref{gi} follows from 
\[
|t|^p\leq 2^{p-1}\,((|t|-\delta_i)_{+}^p+\delta_{i}^p),
\]
while the upper bound is a consequence of Young inequality. This implies
\[
\sum_{i=1}^N \int_B |(u_{\varepsilon})_{x_i}|^p \,dx \leq C\sum_{i=1}^N \int_B|(U_{\varepsilon})_{x_i}|^p + \int_B |f_{\varepsilon}||u_{\varepsilon}-U_{\varepsilon}|\,dx + C
\]
where $C=C(N,p,\delta,|B|)>0$ depends on \(N, p, \delta\) and \(|B|\) only. By using \(\|f_{\varepsilon}\|_{L^{p'}(B)}\le \|f\|_{L^{p'}(2\,B)}\) and \eqref{Uepsilon},
standard computations lead to the desired conclusion.
\end{proof}
\begin{lm}[Regularity of the minimizer I]
\label{lm:approximationII}
Let $u_\varepsilon$ still denote the unique minimizer of \eqref{approximated}. Then we have $u_\varepsilon\in L^\infty(B)$. 
\par
Moreover, if $f\in L^\infty_{loc}(\Omega)$, then there exists a constant \(M\) independent of $\varepsilon$ such that
\begin{equation}
\label{max}
\|u_\varepsilon\|_{L^\infty(B)}\le M.
\end{equation}
\end{lm}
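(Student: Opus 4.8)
\emph{Proof strategy.} The plan is to obtain both assertions by the classical truncation technique of De Giorgi and Stampacchia, the difference between the two being merely whether the data entering the estimate are controlled uniformly in $\varepsilon$. Observe first that for a \emph{fixed} $\varepsilon$ both $U_\varepsilon=U\ast\varrho_\varepsilon$ and $f_\varepsilon=f\ast\varrho_\varepsilon$ are smooth on $\overline B$, hence belong to $L^\infty(B)$ with bounds that may a priori depend on $\varepsilon$; this already makes the qualitative statement $u_\varepsilon\in L^\infty(B)$ a particular case of the uniform argument below.

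The core computation goes as follows. Set $k_0:=\|U_\varepsilon\|_{L^\infty(B)}$ and, for $k\ge k_0$, let $A_k:=\{x\in B:\ u_\varepsilon(x)>k\}$ and take the competitor $v:=\min\{u_\varepsilon,k\}$ in \eqref{approximated}; this is admissible since $(u_\varepsilon-k)_+\le(u_\varepsilon-U_\varepsilon)_+$ and $(u_\varepsilon-U_\varepsilon)_+\in W^{1,p}_0(B)$, so that $w:=(u_\varepsilon-k)_+\in W^{1,p}_0(B)$. On $A_k$ we have $v\equiv k$, and since $g_{i,\varepsilon}(0)=0\le g_{i,\varepsilon}(t)$ for every $t$ (see \eqref{gepsilon}), replacing $\nabla u_\varepsilon$ by $0$ there cannot increase the elliptic part of the energy; hence minimality of $u_\varepsilon$ yields
\[
\sum_{i=1}^N\int_{A_k}g_{i,\varepsilon}\big((u_\varepsilon)_{x_i}\big)\,dx\le\int_{A_k}f_\varepsilon\,(k-u_\varepsilon)\,dx\le\int_{A_k}|f_\varepsilon|\,w\,dx.
\]
Using the lower bound in \eqref{gi} together with $(u_\varepsilon)_{x_i}=w_{x_i}$ a.e.\ on $A_k$, this gives
\[
\int_B|\nabla w|^p\,dx\le C\left(|A_k|+\int_{A_k}|f_\varepsilon|\,w\,dx\right),\qquad C=C(N,p,\delta).
\]
I would then combine this with the Sobolev--Poincar\'e inequality $\|w\|_{L^{q}(B)}\le C_S\,\|\nabla w\|_{L^p(B)}$, valid for $w\in W^{1,p}_0(B)$ with $q=Np/(N-p)$ when $p<N$ and with any fixed $q>p$ otherwise, and with H\"older's inequality $\int_{A_k}|f_\varepsilon|\,w\le\|f_\varepsilon\|_{L^\infty(B)}\,\|w\|_{L^q(B)}\,|A_k|^{1-1/q}$. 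After absorbing, this produces $\|w\|_{L^q(B)}^{q}\le C'\,|A_k|^{q/p}$ with $C'=C'(N,p,\delta,|B|,\|f_\varepsilon\|_{L^\infty(B)})$, and since $w\ge h-k$ on $A_h$ for $h>k$, one deduces $|A_h|\le C'\,(h-k)^{-q}\,|A_k|^{q/p}$ with exponent $q/p>1$. The classical lemma of Stampacchia then gives $|A_{k_0+d}|=0$, i.e.\ $u_\varepsilon\le k_0+d$ a.e., where $d$ depends only on $C'$, $q/p$ and $|B|$; applying the same reasoning to $-u_\varepsilon$ (competitor $\max\{u_\varepsilon,-k\}$) gives the matching lower bound, whence $u_\varepsilon\in L^\infty(B)$.

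For the uniform bound, assume $f\in L^\infty_{loc}(\Omega)$. Then $\|f_\varepsilon\|_{L^\infty(B)}\le\|f\|_{L^\infty(2\,B)}$ for every $0<\varepsilon\le\varepsilon_0$, and, since $\mathfrak{F}$ satisfies the same $p$-growth lower bound, the local minimizer $U$ is itself locally bounded (this is contained in \cite{BraCar}, or follows from the very same truncation argument carried out on $U$ with an additional cut-off), so that $\|U_\varepsilon\|_{L^\infty(B)}\le\|U\|_{L^\infty(2\,B)}=:k_0$ as well. Consequently all the constants $C$, $C_S$, $C'$, $d$ above depend only on $N,p,\delta,|B|,\|f\|_{L^\infty(2\,B)}$ and $\|U\|_{L^\infty(2\,B)}$, and in particular not on $\varepsilon$, which yields \eqref{max} with $M:=k_0+d$. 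I do not expect any genuine difficulty here: the structural facts doing all the work are $g_{i,\varepsilon}(0)=0$, the lower bound in \eqref{gi}, and the $\varepsilon$-uniform control of $f_\varepsilon$ and $U_\varepsilon$; the only points requiring some care are the admissibility (trace) of the truncated competitors and checking that the Stampacchia constant does not degenerate as $\varepsilon\to0$, and, for the uniform estimate, having the a priori local boundedness of $U$ at hand.
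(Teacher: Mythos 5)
Your proof is correct, and the mechanism is the one that underlies the paper's own argument: the paper simply delegates to Giusti's a priori $L^\infty$ bound (\cite[Theorem 7.5 \& Remark 7.6]{Gi}), which is itself a De Giorgi/Stampacchia truncation, whereas you carry it out explicitly. The admissibility of the competitor $\min\{u_\varepsilon,k\}$, the use of $g_{i,\varepsilon}(0)=0$, the lower bound in \eqref{gi}, the Sobolev--H\"older--Young absorption and the Stampacchia iteration lemma are all sound; in particular the exponent $(1-1/q)p'$ produced by Young does exceed $1$ precisely because $q>p$, so the decay exponent $q/p>1$ is attained.

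The one genuine point of divergence from the paper is the starting level. You initialize the iteration at $k_0=\|U_\varepsilon\|_{L^\infty(B)}$, which for the $\varepsilon$-uniform estimate \eqref{max} forces you to know that the original local minimizer $U$ is locally bounded (so that $\|U_\varepsilon\|_{L^\infty(B)}\le\|U\|_{L^\infty(2B)}$). You flag this and sketch how to obtain it, so it is a detour rather than a gap; still, it is an extra input. The paper's cited route does not need $U\in L^\infty_{loc}$: the constant in Giusti's a priori estimate is controlled by the $\varepsilon$-uniform growth conditions \eqref{growthL} together with the $L^p(B)$ norm of the minimizer, and the latter is uniformly bounded by the energy estimate \eqref{uniformeg} combined with Poincar\'e and \eqref{Uepsilon}. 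In other words, the paper trades boundary-level control of the datum for integral (energy) control, which is strictly weaker information and is already available. If you replaced $k_0=\|U_\varepsilon\|_{L^\infty(B)}$ by the interior version of the De Giorgi estimate on balls $B'\Subset B$, feeding in $\|u_\varepsilon\|_{L^p(B)}$ instead of the boundary sup, you would recover exactly the paper's shorter path and dispense with the auxiliary boundedness of $U$.
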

\begin{proof}
We use again \eqref{gi}.
This implies that the integrand \eqref{integrand} satisfies
\begin{equation}
\label{growthL}
c\,|z|^p-\|f_\varepsilon\|_{L^\infty(B)}\, |u|-C'\le L_\varepsilon(x,u,z)\le \frac{1}{c}\,|z|^p+\|f_\varepsilon\|_{L^\infty(B)}\, |u|+C',
\end{equation}
with $c=c(N,p)>0$, $C=C(N,p)>0$ and $C'=C'(p,\delta)>0$. Thus $u_\varepsilon\in L^\infty(B)$ by \cite[Theorem 7.5 \& Remark 7.6]{Gi}.
\vskip.2cm
If $f\in L^\infty_{loc}(\Omega)$, we have $\|f_\varepsilon\|_{L^\infty(B)}\le \|f\|_{L^\infty(2\,B)}$. By \eqref{growthL} we thus get that $L_\varepsilon$ satisfies growth conditions independent of $\varepsilon$.
Then by using again the a priori estimate of \cite[Theorem 7.5]{Gi} and \eqref{uniformeg} we get the desired conclusion.
\end{proof}
\begin{oss}
The previous $L^\infty$ estimate uniform in $\varepsilon$ will be needed in the proof of Theorem B.
\end{oss}
The following result is not optimal, but it is suitable to our needs.
\begin{lm}[Regularity of the minimizer II]
\label{lm:approximation} 
Let $u_\varepsilon$ still denote the unique minimizer of \eqref{approximated}. 
We have $u_\varepsilon\in C^{k}_{loc}(B)$, where
\[
k=\left\{\begin{array}{rl}
2,& \mbox{ if } 2<p\le 3,\\
3,& \mbox{ if } p>3.
\end{array}
\right.
\]
\end{lm}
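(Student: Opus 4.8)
The plan is to exploit the fact that, unlike $\mathfrak F$ itself, the regularized functional is genuinely uniformly elliptic, and then to bootstrap. Observe first that, since $p>2$, each $g_{i,\varepsilon}$ belongs to $C^2(\mathbb R)$ and satisfies
\[
\varepsilon\le g_{i,\varepsilon}''(t)=(p-1)\,(|t|-\delta_i)_+^{p-2}+\varepsilon\le (p-1)\,(|t|+\delta)^{p-2}+\varepsilon,\qquad t\in\mathbb R,
\]
so the integrand $L_\varepsilon$ of \eqref{integrand} is $C^2$ and strictly convex in the gradient variable, with $D^2_{zz}L_\varepsilon$ bounded below by $\varepsilon\,\mathrm{Id}$ and of at most $(p-2)$-polynomial growth, while $f_\varepsilon=f\ast\varrho_\varepsilon\in C^\infty(\overline B)$. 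Hence $u_\varepsilon$ is a weak solution on $B$ of the uniformly elliptic quasilinear equation $\sum_{i=1}^N\big(g_{i,\varepsilon}'((u_\varepsilon)_{x_i})\big)_{x_i}=f_\varepsilon$. First I would invoke the classical interior regularity theory for such equations: difference quotients to get $u_\varepsilon\in W^{2,2}_{loc}(B)$, and then the De Giorgi--Nash--Moser theorem applied to the equation solved by each partial derivative $(u_\varepsilon)_{x_j}$, to conclude that $u_\varepsilon\in C^{1,\alpha}_{loc}(B)$ for some $\alpha\in(0,1)$ (see, e.g., \cite{Gi} and the references therein); equivalently, that $\nabla u_\varepsilon$ is locally bounded in $B$.

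Next, once $\nabla u_\varepsilon$ is known to be locally bounded, I would freeze a local gradient bound and bootstrap via Schauder theory. Fix a compact subset of $B$ on which $|\nabla u_\varepsilon|\le K$; there the function $v=(u_\varepsilon)_{x_j}\in W^{1,2}_{loc}$ solves the \emph{linear} equation, obtained by differentiating the Euler--Lagrange equation in the $x_j$ direction,
\[
\sum_{i=1}^N\Big(a_i\,v_{x_i}\Big)_{x_i}=(f_\varepsilon)_{x_j},\qquad a_i:=g_{i,\varepsilon}''\big((u_\varepsilon)_{x_i}\big),
\]
which is diagonal and uniformly elliptic, since $\varepsilon\le a_i\le(p-1)(K+\delta)^{p-2}+\varepsilon$. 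The gain of regularity is then dictated by the modulus of continuity of $t\mapsto g_{i,\varepsilon}''(t)$, i.e. of $t\mapsto(|t|-\delta_i)_+^{p-2}$ near $|t|=\delta_i$. If $2<p\le 3$ this map is only $C^{0,\min(1,p-2)}_{loc}$, so the coefficients $a_i$ — being the composition of a Hölder function with the $C^{0,\alpha}$ map $(u_\varepsilon)_{x_i}$ — are Hölder continuous, and divergence-form Schauder estimates give $v\in C^{1,\beta}_{loc}$, that is $u_\varepsilon\in C^{2,\beta}_{loc}\subset C^2_{loc}(B)$. If $p>3$, one first obtains $u_\varepsilon\in C^{2,\beta}_{loc}$ exactly as before; then $\nabla u_\varepsilon\in C^{1,\beta}_{loc}$ and, since now $g_{i,\varepsilon}''\in C^{1,\min(1,p-3)}_{loc}$, the coefficients $a_i$ belong to $C^{1,\gamma}_{loc}$ for some $\gamma>0$. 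Rewriting the equation in non-divergence form, $\sum_i a_i\,v_{x_ix_i}+\sum_i(a_i)_{x_i}\,v_{x_i}=(f_\varepsilon)_{x_j}$, with $C^{0,\gamma}_{loc}$ coefficients and smooth right-hand side, interior Schauder estimates yield $v\in C^{2,\gamma}_{loc}$, hence $u_\varepsilon\in C^{3,\gamma}_{loc}\subset C^3_{loc}(B)$.

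The only genuinely non-elementary step — and the expected main obstacle — is the first one, namely the passage from the variational formulation to the local boundedness (and Hölder continuity) of $\nabla u_\varepsilon$: here one must handle simultaneously the $(p-2)$-growth of $D^2_{zz}L_\varepsilon$ and the fact that, prior to any continuity of $\nabla u_\varepsilon$, the coefficients of the differentiated equation are merely measurable, so this requires combining a $W^{2,2}_{loc}$ difference-quotient estimate with a De Giorgi--Nash--Moser argument. Everything after that is a routine Schauder bootstrap; the ceiling $k=2$ for $2<p\le3$ (resp. $k=3$ for $p>3$) is exactly the obstruction coming from the lack of smoothness of $(|t|-\delta_i)_+^{p-2}$ at $|t|=\delta_i$, which is also why no further smoothing of the integrand is needed here under the standing assumption $p>2$. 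Finally, note that the Hölder exponents and constants produced along the way depend on $\varepsilon$; this is immaterial, since Lemma \ref{lm:approximation} only serves to legitimate, at each fixed $\varepsilon$, the differentiations performed in the a priori estimates of the next sections.
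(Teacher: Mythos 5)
Your bootstrap from $C^{1,\alpha}_{loc}$ to $C^2_{loc}$ (resp. $C^3_{loc}$) via Schauder theory is fine and is in the same spirit as the paper's appeal to \cite[Theorem 10.18]{Gi}. The gap is in your first step.

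You claim that a difference-quotient argument gives $u_\varepsilon\in W^{2,2}_{loc}(B)$ and that the De Giorgi--Nash--Moser theorem, applied to the equation $\sum_i\bigl(a_i\,v_{x_i}\bigr)_{x_i}=(f_\varepsilon)_{x_j}$ with $a_i=g''_{i,\varepsilon}\bigl((u_\varepsilon)_{x_i}\bigr)$ solved by $v=(u_\varepsilon)_{x_j}$, yields $\nabla u_\varepsilon\in C^{0,\alpha}_{loc}$, ``equivalently, that $\nabla u_\varepsilon$ is locally bounded.'' But De Giorgi--Nash--Moser requires the coefficients to be measurable and \emph{bounded above and below}. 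Here the only lower bound is $a_i\ge\varepsilon$, while $a_i\le(p-1)\bigl(|(u_\varepsilon)_{x_i}|+\delta\bigr)^{p-2}+\varepsilon$ is controlled by $|\nabla u_\varepsilon|^{p-2}$, which is only known to be in $L^{p/(p-2)}_{loc}$ at this stage. So you cannot invoke the linear De Giorgi theorem to bound $\nabla u_\varepsilon$ without first knowing that $\nabla u_\varepsilon$ is bounded: the step is circular. (The regularized integrand has $(2,p)$-type growth, quadratic from below and $p$-growth from above, so the linear theory with $L^\infty$ coefficients does not apply off the shelf; what is needed is either a genuinely nonlinear Moser/De Giorgi scheme tailored to this structure, or an independent route to the Lipschitz bound.)

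The paper sidesteps this by appealing to Theorem \ref{teo:boucla} (Bousquet--Clarke), which gives $u_\varepsilon\in W^{1,\infty}_{loc}(B)$ directly from the lower bound $\langle D^2 F_\varepsilon\,\xi,\xi\rangle\ge\varepsilon|\xi|^2$ and the boundedness of $u_\varepsilon$, with \emph{no} hypothesis of an upper bound on $D^2F_\varepsilon$; the proof of that theorem proceeds via an entirely different mechanism (bounded slope / dual arguments), not De Giorgi. Once the local Lipschitz bound $\ell=\|\nabla u_\varepsilon\|_{L^\infty(\widetilde B)}$ is available, the coefficients $a_i$ become bounded, $\varepsilon\le a_i\le\varepsilon+(p-1)\ell^{p-2}$, and everything you wrote afterwards --- the difference-quotient step for $W^{2,2}_{loc}$, then De Giorgi for $C^{0,\sigma}$ of $\nabla u_\varepsilon$, then the Schauder bootstrap depending on whether $2<p\le 3$ or $p>3$ --- goes through. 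So the fix is to put the Lipschitz step first, via a result of the Bousquet--Clarke / Stampacchia type, rather than trying to extract it from the linearized equation.
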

\begin{proof}
We divide the proof in two parts.
\vskip.2cm\noindent
{\it Local Lipschitz regularity.} By Lemma \ref{lm:approximationII}, we know that $u_\varepsilon$ is bounded. Then the local Lipschitz continuity is a plain consequence of Theorem \ref{teo:boucla}, applied with
\begin{equation}
\label{Fepsilon}
F_\varepsilon(z)=\sum_{i=1}^N g_{i}(z_i)+\frac{\varepsilon}{2}\, |z|^2,\qquad z\in\mathbb{R}^N,
\end{equation}
which verifies \eqref{dessous} with $\mu=\varepsilon>0$. 
\vskip.2cm\noindent
{\it Local higher regularity.} Let $\widetilde B\Subset B$ be a ball and set $\ell=\|\nabla u_\varepsilon\|_{L^\infty(\widetilde B)}$, which is finite thanks to the previous step.
By optimality, we have that $u_\varepsilon$ solves the  
elliptic equation
\begin{equation}
\label{DGNM}
\mathrm{div}(\nabla F_\varepsilon(\nabla u_\varepsilon))=f_\varepsilon,\qquad \mbox{ in } \widetilde B,
\end{equation}
where $F_\varepsilon$ is as in \eqref{Fepsilon}. Since we have
\[
\varepsilon\,|\xi|^2\le \langle D^2 F_\varepsilon(\nabla u_\varepsilon)\,\xi,\xi\rangle\le \left(\varepsilon+(p-1)\, \ell^{p-2}\right)\, |\xi|^2,\qquad \mbox{ on }\widetilde B, 
\]
we can infer $u_\varepsilon\in W^{2,2}_{loc}(\widetilde B)$ by a standard differential quotients argument (see for example \cite[Theorem 8.1]{Gi}). This in turn permits to find the equation locally solved by $\nabla u_\varepsilon$, by differentiating \eqref{DGNM}. Thus $\nabla u_\varepsilon\in C^{0,\sigma}_{loc}(\widetilde B)$ by the celebrated De Giorgi--Moser--Nash Theorem, for some $\sigma>0$. It remains to observe that $F_\varepsilon\in C^{k,\alpha}$, where 
$k$ is as in the statement and 
\[
\alpha=\left\{\begin{array}{rl}
\min\{p-2,1\},& \mbox{ if } 2<p\le 3,\\
\min\{p-3,1\},& \mbox{ if } p>3.
\end{array}
\right.
\]
Then \cite[Theorem 10.18]{Gi} implies that $u_\varepsilon$ has the claimed regularity properties.
\end{proof}
\begin{lm}[Convergence to a minimizer]
\label{lm:convergence}
With the same notation as before, we have
\[
\lim_{\varepsilon\to 0} \|u_\varepsilon-\widetilde u\|_{L^p(B)}=0,
\]
where $\widetilde u$ is a solution of
\begin{equation}
\label{local}
\min\left\{\mathfrak{F}(\varphi;B)\, :\, \varphi-U\in W^{1,p}_0(B)\right\}.
\end{equation}
\end{lm}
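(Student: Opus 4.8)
The plan is to exploit the uniform energy bound \eqref{uniformeg} together with the variational characterization of $u_\varepsilon$, and to use Lemma \ref{propagationofregularity} to identify the limit. First I would note that $(u_\varepsilon - U_\varepsilon)_\varepsilon$ is bounded in $W^{1,p}_0(B)$: indeed $\|\nabla u_\varepsilon\|_{L^p(B)} \le C_2^{1/p}$ by \eqref{uniformeg}, and $\|\nabla U_\varepsilon\|_{L^p(B)} \le C_1$ by \eqref{Uepsilon}, so by Poincaré the sequence is bounded in $W^{1,p}_0(B)$. Hence, up to a subsequence, $u_\varepsilon \rightharpoonup u_*$ weakly in $W^{1,p}(B)$ for some $u_* \in W^{1,p}(B)$; moreover $U_\varepsilon \to U$ strongly in $W^{1,p}(B)$ (standard property of mollification, using $2B\Subset\Omega$), so $u_* - U \in W^{1,p}_0(B)$. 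By Rellich, $u_\varepsilon \to u_*$ strongly in $L^p(B)$ along this subsequence.

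Next I would show that any such weak limit $u_*$ is a solution of \eqref{local}. Fix a competitor $\varphi \in W^{1,p}(B)$ with $\varphi - U \in W^{1,p}_0(B)$, and set $\varphi_\varepsilon = \varphi + (U_\varepsilon - U)$, which is admissible for \eqref{approximated}. By minimality of $u_\varepsilon$,
\[
\sum_{i=1}^N \int_B g_{i,\varepsilon}((u_\varepsilon)_{x_i})\,dx + \int_B f_\varepsilon\, u_\varepsilon\,dx \le \sum_{i=1}^N \int_B g_{i,\varepsilon}((\varphi_\varepsilon)_{x_i})\,dx + \int_B f_\varepsilon\, \varphi_\varepsilon\,dx.
\]
On the right-hand side, since $(\varphi_\varepsilon)_{x_i} \to \varphi_{x_i}$ in $L^p(B)$ and $g_{i,\varepsilon}(t) = g_i(t) + \tfrac\varepsilon2 t^2$ with the extra term controlled by $\tfrac\varepsilon2 C$ (using the $L^p$-bound on $\nabla\varphi_\varepsilon$), one passes to the limit to get $\sum_i \int_B g_i(\varphi_{x_i})\,dx + \int_B f\varphi\,dx$; here $f_\varepsilon \to f$ in $L^{p'}_{loc}$ and $\varphi_\varepsilon \to \varphi$ in $L^p$ handle the linear term. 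On the left-hand side, $\int_B f_\varepsilon u_\varepsilon\,dx \to \int_B f u_*\,dx$ by strong $L^p$ convergence of $u_\varepsilon$ and $L^{p'}$ convergence of $f_\varepsilon$; the nonnegative terms $\tfrac\varepsilon2\int_B |(u_\varepsilon)_{x_i}|^2\,dx \ge 0$ can be dropped; and $\liminf_\varepsilon \sum_i \int_B g_i((u_\varepsilon)_{x_i})\,dx \ge \sum_i \int_B g_i((u_*)_{x_i})\,dx$ by weak lower semicontinuity of the convex functionals $v \mapsto \int_B g_i(v_{x_i})\,dx$. Combining, $\mathfrak{F}(u_*;B) \le \mathfrak{F}(\varphi;B)$ for every admissible $\varphi$, so $u_*$ solves \eqref{local}.

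Finally I would upgrade subsequential convergence to full convergence. This is where Lemma \ref{propagationofregularity} enters: although \eqref{local} need not have a unique solution, any two solutions $\widetilde u$, $u_*$ satisfy $\big||\widetilde u_{x_i}| - |(u_*)_{x_i}|\big| \le 2\delta_i$ a.e., and more to the point, the lemma's proof shows that the nonuniqueness is confined to the ``flat'' regions. To conclude it suffices to observe that $u_* - \widetilde u \in W^{1,p}_0(B)$ and both minimize the \emph{convex} functional $\mathfrak{F}(\cdot;B)$, so $\tfrac12(u_* + \widetilde u)$ is also a minimizer; comparing energies and using Lemma \ref{lm:convessofuori} as in the proof of Lemma \ref{propagationofregularity} forces $(u_*)_{x_i}$ and $\widetilde u_{x_i}$ to agree wherever $g_i$ is strictly convex. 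A cleaner route, which I would prefer to write up: since the statement only asserts $L^p$-convergence to \emph{a} solution $\widetilde u$, it is enough to show that \emph{every} subsequence of $(u_\varepsilon)$ has a further subsequence converging in $L^p(B)$ to \emph{some} solution of \eqref{local} — which is exactly what the argument above gives. If one additionally wants a single fixed $\widetilde u$, one selects it as the limit of one subsequence and notes that $\|u_\varepsilon - \widetilde u\|_{L^p(B)} \to 0$ along \emph{every} subsequence by the standard subsequence-of-subsequence principle together with the fact that all limit points solve \eqref{local} and any two solutions have the same $L^p$ class on the strict-convexity region; the possible discrepancy on the flat region contributes to the gradient but, combined with the shared boundary datum and the energy identity $\mathfrak{F}(u_*;B) = \mathfrak{F}(\widetilde u;B)$, one checks $u_* = \widetilde u$ in $L^p$. \textbf{The main obstacle} is precisely this last identification step: because $\mathfrak{F}$ is not strictly convex, one cannot immediately conclude that all subsequential limits coincide, and one must argue carefully that the freedom in the minimizer does not affect the $L^p$ limit — this is the technical heart and should be handled by the convexity trick from Lemma \ref{propagationofregularity} rather than by any compactness estimate.
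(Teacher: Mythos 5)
Your subsequential argument is correct and takes essentially the same route as the paper: you extract a weak $W^{1,p}(B)$ (strong $L^p(B)$) limit along a sequence $\varepsilon_k\to 0$, verify that the limit inherits the boundary condition, and then prove minimality by passing to the limit in the variational inequality. The only genuine difference is the competitor step: you test $u_\varepsilon$ against an arbitrary admissible $\varphi_\varepsilon = \varphi + (U_\varepsilon - U)$ and pass to the limit for each fixed $\varphi$, whereas the paper only compares $u_{\varepsilon_k}$ with $U_{\varepsilon_k}$, shows $\mathfrak{F}_{\varepsilon_k}(U_{\varepsilon_k};B)\to \mathfrak{F}(U;B)$ via the Lipschitz estimate \eqref{lipschitz}, and then uses the hypothesis that $U$ is a local minimizer, so that $\mathfrak{F}(U;B)$ is already the minimum value in \eqref{local}. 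The paper's route is a bit leaner because it avoids the ``for every $\varphi$'' passage to the limit, but both are valid and at the same level of difficulty.

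Your attempt to upgrade from subsequential to full convergence, however, contains a gap. You claim that any two solutions of \eqref{local} ``have the same $L^p$ class'' and that shared boundary data and the energy identity force $u_*=\widetilde u$ in $L^p(B)$. This does not follow and is in fact false in general: Lemma~\ref{propagationofregularity} only gives $\big||(u_1)_{x_i}|-|(u_2)_{x_i}|\big|\le 2\,\delta_i$ a.e., which is far from forcing $u_1=u_2$. When some $\delta_i>0$, \eqref{local} genuinely admits distinct minimizers in $L^p$ with the same Dirichlet datum; a one-dimensional model with $f\equiv 0$, $g(t)=(|t|-\delta)_+^p/p$ and zero boundary values already has a whole family of distinct Lipschitz minimizers, all with the same (zero) energy. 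Consequently the ``subsequence-of-subsequences'' principle cannot be invoked as you use it, since it requires all cluster points to coincide, and that is precisely what is not established. Note that the paper's own proof also only produces a subsequence $\{\varepsilon_k\}$, and this is exactly how the lemma is invoked later in the proof of Theorem~A (``we can find a sequence $\{\varepsilon_k\}$ converging to $0$\ldots''), so the stronger full-limit phrasing in the statement is a slight looseness already present in the source; but the repair you propose relies on an identification of limit points that is unproved and in general untrue.
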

\begin{proof}
By \eqref{uniformeg}, there exists a sequence $\{\varepsilon_k\}_{k\in\mathbb{N}}$ converging to $0$ as $k$ goes to $\infty$ and a function $\widetilde u\in W^{1,p}(B)$ such that $\{u_{\varepsilon_k}\}_{k\in\mathbb{N}}$ converges weakly to $\widetilde u$ in $W^{1,p}(B)$ and strongly in \(L^{p}(B)\).
The function $U_\varepsilon=U\ast\varrho_\varepsilon$ is of course admissible for the approximated problem and thus
\[
\liminf_{k\to\infty}\mathfrak{F}_{\varepsilon_k}(U_{\varepsilon_k};B)\ge \liminf_{k\to \infty}\mathfrak{F}_{\varepsilon_k}(u_
{\varepsilon_k};B)\ge \liminf_{k\to \infty} \mathfrak{F}(u_{\varepsilon_k};B)\ge \mathfrak{F}(\widetilde u;B),
\]
where we used the  weak lower semicontinuity of $\mathfrak{F}$. We then observe that by using the strong convergence of $U_\varepsilon$ to $U$ and inequality \eqref{lipschitz} in Appendix \ref{sec:A}, we get
\[
\begin{split}
\lim_{k\to \infty}|\mathfrak{F}_{\varepsilon_k}(U_{\varepsilon_k};B)-\mathfrak{F}(U;B)|&\le \lim_{k\to \infty} \sum_{i=1}^N \int_B |g_i((U_{\varepsilon_k})_{x_i})-g_i(U_{x_i})|\, dx\\
&+\lim_{k\to \infty} \frac{\varepsilon_k}{2}\, \int_B |\nabla U_{\varepsilon_k}|^2+\lim_{k\to \infty} \int_B |f_{\varepsilon_k}\, U_{\varepsilon_k}-f\,U|\, dx=0,
\end{split}
\]
and thus 
\[
\mathfrak{F}(U;B)=\lim_{k\to \infty} \mathfrak{F}_{\varepsilon_k}(U_{\varepsilon_k};B)\ge \mathfrak{F}(\widetilde u;B).
\]
By definition of local minimizer, the function $U$ itself is a solution of \eqref{local}, then the previous inequality implies that $\widetilde u$ is a minimizer.
\end{proof}

\section{Local energy estimates for the approximating problem}

\label{sec:caccioppoli}

For the ball $B\Subset\Omega$ we consider the regularized problem \eqref{approximated}. We still denote by $u_\varepsilon$ its unique solution, which verifies the Euler-Lagrange equation
\begin{equation}
\label{regolareg}
\sum_{i=1}^N \int g'_{i,\varepsilon}((u_\varepsilon)_{x_i})\, \varphi_{x_i}\, dx+\int f_\varepsilon\, \varphi\, dx=0,\qquad \varphi\in W^{1,p}_0(B).
\end{equation}
From now on, in order to simplify the notation, we will systematically forget the subscript $\varepsilon$ on $u_\varepsilon$ and simply write $u$.
\par
We now insert a test function of the form $\varphi=\psi_{x_j}\in W^{1,p}_0(B)$ in \eqref{regolareg}, compactly supported in $B$. Then an integration by parts lead us to
\begin{equation}
\label{derivatag}
\sum_{i=1}^N \int g_{i,\varepsilon}''(u_{x_i})\, u_{x_i\,x_j}\, \psi_{x_i}\, dx-\int f_\varepsilon\,\psi_{x_j}\,dx=0,
\end{equation}
for $j=1,\dots,N$. This is the equation solved by $u_{x_j}$. 

\subsection{Caccioppoli-type inequalities}
In what follows we use the parameter $\delta$ defined in \eqref{delta}.
The general Caccioppoli inequality for an important class of subsolutions is given by the following result.
\begin{lm}
Let $\Phi:\mathbb{R}\to\mathbb{R}^+$ be a $C^2$ convex function 
such that
\begin{equation}
\label{astuce}
\Phi'(t)\equiv 0\quad \mbox{ for } |t|\le \delta.
\end{equation}
Then there exists a constant $C_3=C_3(p)>0$ such that for every Lipschitz function $\eta$ with compact support in $B$, we have
\begin{equation}
\label{motherg}
\begin{split}
\sum_{i=1}^N \int_{A_j} &g''_{i,\varepsilon}(u_{x_i})\,\left|\left(\Phi(u_{x_j})\right)_{x_i}\right|^2\, \eta^2\, dx\\
&\le C_3\,\sum_{i=1}^N \int_{A_j} g''_{i,\varepsilon}(u_{x_i})\,|\Phi(u_{x_j})|^2\, |\eta_{x_i}|^2\, dx\\
&+C_3\,\int_{A_j} |f_\varepsilon|^2\, \Big[\Phi'(u_{x_j})^2+\Phi''(u_{x_j})\, \Phi(u_{x_j})\Big]\,\eta^2\, dx+C_3\,\int_{A_j} \Phi(u_{x_j})^2\, |\eta_{x_j}|^2\, dx,\\
\end{split}
\end{equation}
where we set $A_j=\{x\in B\, :\, |u_{x_j}|\ge \delta\}$.
\end{lm}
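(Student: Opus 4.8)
The plan is to test the differentiated Euler--Lagrange equation \eqref{derivatag} with a test function tailored to produce the square of the derivatives of $\Phi(u_{x_j})$. Since $\Phi$ vanishes identically on $\{|t|\le\delta\}$ and $\delta>\delta_j$, the function $\Phi'(u_{x_j})\,\eta^2$ is supported in $A_j$ and belongs to $W^{1,p}_0(B)$ by the regularity of $u=u_\varepsilon$ established in Lemma \ref{lm:approximation} (so $u\in C^2$, and on $A_j$ the integrand $g_{j,\varepsilon}$ is smooth away from $|t|=\delta_j$, which is excluded there). Thus I would insert $\psi_{x_j}=\Phi'(u_{x_j})\,\eta^2$ into \eqref{derivatag}, i.e. take $\psi$ so that its $x_j$-derivative is this function. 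Expanding $\left(\Phi'(u_{x_j})\,\eta^2\right)_{x_i} = \Phi''(u_{x_j})\,u_{x_jx_i}\,\eta^2 + 2\,\Phi'(u_{x_j})\,\eta\,\eta_{x_i}$ and using $\Phi''(u_{x_j})\,u_{x_jx_i}^2 = |(\Phi(u_{x_j}))_{x_i}|^2/\Phi''$ when $\Phi''\neq0$ — more robustly, using the pointwise identity $(\Phi(u_{x_j}))_{x_i}=\Phi'(u_{x_j})\,u_{x_jx_i}$ — I obtain on the left-hand side a term $\sum_i \int g''_{i,\varepsilon}(u_{x_i})\,\Phi''(u_{x_j})\,u_{x_jx_i}^2\,\eta^2$, which controls $\sum_i\int g''_{i,\varepsilon}(u_{x_i})\,|(\Phi(u_{x_j}))_{x_i}|^2\,\eta^2/(\text{something})$; the cleaner route is to observe $\Phi''\,\Phi'{}^2 \ge$ comparison is awkward, so instead I would directly bound $|(\Phi(u_{x_j}))_{x_i}|^2 = \Phi'(u_{x_j})^2 u_{x_jx_i}^2$ against $\Phi''(u_{x_j})\,\Phi(u_{x_j})\,u_{x_jx_i}^2$ is false in general — therefore the correct test function is $\psi$ with $\psi_{x_j}=\left(\Phi(u_{x_j})\,\Phi'(u_{x_j})/\text{?}\right)$; let me reconsider.

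A cleaner and standard choice: test with $\psi$ whose $x_j$-derivative is $\Phi'(u_{x_j})\,\eta^2$ but then on the left one gets $\sum_i\int g''_{i,\varepsilon}(u_{x_i})\,\Phi''(u_{x_j})\,u_{x_jx_i}^2\,\eta^2$, and one uses Cauchy--Schwarz on the cross term $2\sum_i\int g''_{i,\varepsilon}(u_{x_i})\,\Phi'(u_{x_j})\,u_{x_jx_i}\,\eta\,\eta_{x_i}$, absorbing $\varepsilon_0\sum_i\int g''_{i,\varepsilon}\Phi''\,u_{x_jx_i}^2\eta^2$ — but this requires $\Phi'{}^2\le \Phi''\,\Phi$-type control to pass from $\Phi''\,u_{x_jx_i}^2$ back to $|(\Phi(u_{x_j}))_{x_i}|^2 = \Phi'{}^2 u_{x_jx_i}^2$, which does NOT hold for general convex $\Phi$. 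Hence the genuinely correct test function, and what I expect the authors use, is $\psi$ with $\psi_{x_j} = \Phi(u_{x_j})\,\Phi'(u_{x_j})\,\eta^2$ (note $\Phi\ge0$), so that the chain rule produces $\Phi'{}^2 u_{x_jx_i}^2 = |(\Phi(u_{x_j}))_{x_i}|^2$ on the left directly (plus a $\Phi\,\Phi''\,u_{x_jx_i}^2\ge 0$ term which is discarded), while the $\eta_{x_i}$ cross terms give $2\,g''_{i,\varepsilon}(u_{x_i})\,\Phi(u_{x_j})\,\Phi'(u_{x_j})\,u_{x_jx_i}\,\eta\,\eta_{x_i}$, handled by Young's inequality to absorb $\tfrac12 g''_{i,\varepsilon}|(\Phi(u_{x_j}))_{x_i}|^2\eta^2$ and leave $C\,g''_{i,\varepsilon}\,\Phi(u_{x_j})^2\,\eta_{x_i}^2$. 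This gives the first term on the right of \eqref{motherg}.

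For the forcing term, $-\int f_\varepsilon\,\psi_{x_j}\,dx = -\int f_\varepsilon\,\big(\Phi(u_{x_j})\,\Phi'(u_{x_j})\big)_{x_j}\eta^2\,dx$; integrating by parts in $x_j$ moves the derivative onto $f_\varepsilon\,\eta^2$, but since we only assume $f\in L^{p'}$ (so $f_\varepsilon$ smooth but with no uniform derivative bound) one instead keeps $\left(\Phi(u_{x_j})\,\Phi'(u_{x_j})\right)_{x_j} = \big(\Phi'(u_{x_j})^2 + \Phi(u_{x_j})\,\Phi''(u_{x_j})\big)\,u_{x_jx_j}$ and estimates $|f_\varepsilon|\,\big(\Phi'{}^2+\Phi\,\Phi''\big)\,|u_{x_jx_j}|\,\eta^2$ by Young's inequality: one factor $\Phi'{}^2+\Phi\,\Phi''$ is paired with $u_{x_jx_j}$ to reconstruct a piece controlled by $g''_{j,\varepsilon}(u_{x_j})\,|(\Phi(u_{x_j}))_{x_j}|^2\,\eta^2$ (using $g''_{j,\varepsilon}\ge c>0$ on $A_j$ since $p>2$... actually $g''_{j,\varepsilon}=\varepsilon+(p-1)(|t|-\delta_j)^{p-2}_+$, which on $A_j$ where $|t|\ge\delta>\delta_j$ is bounded below by $(p-1)(\delta-\delta_j)^{p-2}=(p-1)$, a positive constant), and the other factor leaves $C|f_\varepsilon|^2\big(\Phi'(u_{x_j})^2+\Phi''(u_{x_j})\,\Phi(u_{x_j})\big)\eta^2$ after also using $\Phi'{}^2+\Phi\Phi''$ being comparable to its own square root times itself — more precisely Young with exponents $2,2$ gives $|f_\varepsilon|\,(\cdots)\,|u_{x_jx_j}|\eta^2 \le \tfrac12 g''_{j}|(\Phi)_{x_j}|^2\eta^2 + C|f_\varepsilon|^2 g_j''^{-1}(\Phi'{}^2+\Phi\Phi'')^2/\Phi'{}^2\,\eta^2$ and the square is reabsorbed since $\Phi\Phi''$ can be bounded... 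This is the step I expect to be the main obstacle: carefully extracting exactly the combination $\Phi'(u_{x_j})^2+\Phi''(u_{x_j})\,\Phi(u_{x_j})$ in the forcing estimate without extraneous factors, which forces a judicious split in Young's inequality and use of the uniform lower bound on $g''_{j,\varepsilon}$ on $A_j$. The final term $C\int \Phi(u_{x_j})^2|\eta_{x_j}|^2$ on the right of \eqref{motherg} presumably also arises from an integration by parts in $x_j$ of the $f_\varepsilon$ term or from separating the diagonal contribution; I would track it through the same Young's inequality bookkeeping. All integrals are legitimately restricted to $A_j$ because every term carries a factor $\Phi$, $\Phi'$ or $\Phi''$ vanishing off $A_j$ by \eqref{astuce}, and the manipulations are justified by $u_\varepsilon\in C^2_{loc}(B)$.
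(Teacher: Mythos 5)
Your proposal lands on the same test function as the paper (namely $\psi=\eta^2\,\Phi(u_{x_j})\,\Phi'(u_{x_j})$ plugged into \eqref{derivatag}), and the overall Young-inequality strategy is correct, but there is a genuine gap at the point you yourself flag as ``the main obstacle''. When you expand $\psi_{x_i}$ on the left-hand side you get
\[
\sum_i\int g''_{i,\varepsilon}(u_{x_i})\,u_{x_ix_j}^2\,\Phi'(u_{x_j})^2\,\eta^2
\;+\;\sum_i\int g''_{i,\varepsilon}(u_{x_i})\,u_{x_ix_j}^2\,\Phi(u_{x_j})\,\Phi''(u_{x_j})\,\eta^2
\]
plus the $\eta\,\eta_{x_i}$ cross terms. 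You keep the first sum (it is $\sum_i\int g''_{i,\varepsilon}|(\Phi(u_{x_j}))_{x_i}|^2\eta^2$) and \emph{discard} the second as ``nonnegative, hence harmless''. But that second sum is exactly what the paper calls the \emph{sponge term} $\mathcal{S}(\eta)$, and it is indispensable: when you later expand $(\eta^2\Phi\Phi')_{x_j}$ in the forcing integral, the piece $\Phi(u_{x_j})\Phi''(u_{x_j})\,u_{x_jx_j}$ appears, and Young's inequality on $|f_\varepsilon|\,\Phi\Phi''\,|u_{x_jx_j}|\,\eta^2$ produces a term of the form $\tau\int u_{x_jx_j}^2\,\Phi\Phi''\,\eta^2$, which can only be absorbed by $\mathcal{S}(\eta)$ (using $g''_{j,\varepsilon}\ge p-1$ on $A_j$). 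It cannot be absorbed into $\int g''_{j,\varepsilon}\Phi'^2\,u_{x_jx_j}^2\,\eta^2$, because for general convex $\Phi$ there is no bound $\Phi\Phi''\le C\,\Phi'^2$ with a constant $C$ depending only on $p$ --- precisely the obstruction you noticed. So the fix is not a clever re-split of Young's inequality; it is simply to \emph{not} throw away $\mathcal{S}(\eta)$.

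Two smaller points. First, you are notationally loose about $\psi$ versus $\psi_{x_j}$: in \eqref{derivatag} the test function is $\psi$ itself (one sets $\psi=\zeta\,\Phi'(u_{x_j})$ and then $\zeta=\eta^2\Phi(u_{x_j})$), not a function whose $x_j$-derivative is prescribed. Second, you write the forcing term as $-\int f_\varepsilon\,(\Phi\Phi')_{x_j}\,\eta^2$, omitting the derivative of $\eta^2$; the full expansion $(\eta^2\Phi\Phi')_{x_j}$ contains a $2\,\eta\,\eta_{x_j}\,\Phi\,\Phi'$ piece, and it is precisely that piece (after Young with $|f_\varepsilon|$) that produces the last term $C_3\int_{A_j}\Phi(u_{x_j})^2|\eta_{x_j}|^2$ in \eqref{motherg}, whose origin you left unexplained.
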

\begin{proof}
In \eqref{derivatag} we take the test function\footnote{Observe that this is an admissible test function by Lemma \ref{lm:approximation}.} $\psi=\zeta\, \Phi'(u_{x_j})$,
with $\Phi:\mathbb{R}\to\mathbb{R}^+$ as in the statement and $\zeta$  nonnegative Lipschitz function with support in $B$. We thus obtain
\[
\begin{split}
\sum_{i=1}^N \int_{A_j} g''_{i,\varepsilon}(u_{x_i})\, \left(\Phi(u_{x_j})\right)_{x_i}\, \zeta_{x_i}\, dx&+\sum_{i=1}^N \int_{A_j} g''_{i,\varepsilon}(u_{x_i})\, u_{x_i\,x_j}^2\, \Phi''(u_{x_j})\,\zeta\, dx=\int_{A_j} f_\varepsilon\, \left(\zeta\,\Phi'(u_{x_j})\right)_{x_j}\, dx.
\end{split}
\] 
Finally, we test the previous equation against $\zeta=\eta^2\, \Phi(u_{x_j})$, where $\eta$ is again a Lipschitz function with support in $B$. Then we get
\[
\begin{split}
\sum_{i=1}^N \int_{A_j}& g''_{i,\varepsilon}(u_{x_i})\, \left|\left(\Phi(u_{x_j})\right)_{x_i}\right|^2\, \eta^2\, dx+\mathcal{S}(\eta)\\
&\le 2\, \sum_{i=1}^N\int_{A_j} g''_{i,\varepsilon}(u_{x_i})\, \left|\left(\Phi(u_{x_j})\right)_{x_i}\right|\,\Phi(u_{x_j})\,\eta\,|\eta_{x_i}| dx+\int_{A_j} |f_\varepsilon|\, \left|\left(\eta^2\,\Phi(u_{x_j})\,\Phi'(u_{x_j})\right)_{x_j}\right|\, dx.
\end{split}
\]
where we have introduced the {\it sponge term}
\[
\mathcal{S}(\eta)=\sum_{i=1}^N \int_{A_j} g''_{i,\varepsilon}(u_{x_i})\, u_{x_i\,x_j}^2\, \Phi''(u_{x_j})\,\Phi(u_{x_j})\,\eta^2\, dx.
\]
From the previous inequality, by Young inequality in the first term on the right-hand side
\begin{equation}
\label{conti}
\begin{split}
\sum_{i=1}^N &\int_{A_j}  g''_{i,\varepsilon}(u_{x_i})\, \left|\left(\Phi(u_{x_j})\right)_{x_i}\right|^2\, \eta^2\, dx+2\,\mathcal{S}(\eta)\\
&\le 4\, \sum_{i=1}^N\int_{A_j} g''_{i,\varepsilon}(u_{x_i})\, \Phi(u_{x_j})^2\,|\eta_{x_i}|^2\, dx+2\,\int_{A_j} |f_\varepsilon|\, \left|\left(\eta^2\,\Phi(u_{x_j})\,\Phi'(u_{x_j})\right)_{x_j}\right|\, dx.
\end{split}
\end{equation}
We now estimate the term containing $f_\varepsilon$. We first observe
\[
\begin{split}
\int_{A_j} |f_\varepsilon|\,\Big|\left(\eta^2\,\Phi(u_{x_j})\!\right.&\left.\Phi'(u_{x_j})\right)_{x_j}\Big|\, dx\le \int_{A_j} |f_\varepsilon|\, |\Phi'(u_{x_j})|\left|\left(\Phi(u_{x_j})\right)_{x_j}\right|\,\eta^2\, dx\\
&+2\int_{A_j} |f_\varepsilon|\, |\Phi'(u_{x_j})|\, \Phi(u_{x_j})\, \eta\, |\eta_{x_j}|\, dx+\int_{A_j} |f_\varepsilon|\, \left|\left(\Phi'(u_{x_j})\right)_{x_j}\right|\, \Phi(u_{x_j})\,\eta^2\, dx.
\end{split}
\]
On the set $A_j$ we have
\begin{equation}
\label{cazzata}
g''_{j,\varepsilon}(u_{x_j})\ge  (p-1).
\end{equation}
Let us consider the first term above containing $f_\varepsilon$:
\[
\begin{split}
\int_{A_j} |f_\varepsilon|\, |\Phi'(u_{x_j})|\, \left|\left(\Phi(u_{x_j})\right)_{x_j}\right|\,\eta^2\, dx&\le \frac{1}{2\,\tau}\, \int_{A_j} |f_\varepsilon|^2\, |\Phi'(u_{x_j})|^2\, \eta^2\, dx\\
&+\frac{\tau}{2}\,\int_{A_j} \left|\left(\Phi(u_{x_j})\right)_{x_j}\right|^2\, \eta^2\,dx\\
&\le \frac{1}{2\,\tau}\, \int_{A_j} |f_\varepsilon|^2\, |\Phi'(u_{x_j})|^2\, \eta^2\, dx\\
&+\frac{\tau}{2\,(p-1)}\,\int_{A_j} g''_{j,\varepsilon}(u_{x_j})\,\left|\left(\Phi(u_{x_j})\right)_{x_j}\right|^2\, \eta^2\,dx.
\end{split}
\]
The last term can be absorbed in the left-hand side of \eqref{conti}, by taking $\tau=(p-1)/2$. The second term containing $f_\varepsilon$ is simply estimated by Young inequality
\[
\int_{A_j} |f_\varepsilon|\, |\Phi'(u_{x_j})|\, \Phi(u_{x_j})\, \eta\, |\eta_{x_j}|\, dx\le \frac{1}{2}\, \int_{A_j} |f_\varepsilon|^2\, |\Phi'(u_{x_j})|^2\, \eta^2\, dx+\frac{1}{2} \int_{A_j} \Phi(u_{x_j})^2\, |\eta_{x_j}|^2\, dx,
\]
while for the last one we use the sponge term $\mathcal{S}(\eta)$ to absorb the Hessian of u. Namely, we have
\[
\begin{split}
\int_{A_j} |f_\varepsilon|\, \left|\left(\Phi'(u_{x_j})\right)_{x_j}\right|\, \Phi(u_{x_j})\,\eta^2\, dx&=\int_{A_j} |f_\varepsilon|\, |u_{x_j\,x_j}|\, \Phi''(u_{x_j})\, \Phi(u_{x_j})\,\eta^2\, dx\\
& \le \tau\,\int_{A_j} u_{x_j\,x_j}^2\, \Phi''(u_{x_j})\, \Phi(u_{x_j})\,\eta^2\, dx\\
&+\frac{1}{\tau}\,\int_{A_j}|f_\varepsilon|^2\,\Phi''(u_{x_j})\, \Phi(u_{x_j})\,\eta^2\, dx\\
&\le \frac{\tau}{p-1}\, \mathcal{S}(\eta)+\frac{1}{\tau}\,\int_{A_j}|f_\varepsilon|^2\,\Phi''(u_{x_j})\, \Phi(u_{x_j})\,\eta^2\, dx.
\end{split}
\]
In the last estimate we used again \eqref{cazzata}.
The term $\tau/(p-1)\,\mathcal{S}(\eta)$ can then be absorbed in the left-hand side. This concludes the proof.
\end{proof}
If we allow for derivatives of $f_\varepsilon$ on the right-hand side of \eqref{motherg}, the previous estimate is simpler to get. In this case we can allow for more general subsolutions.
\begin{lm}[Right-hand side in a Sobolev space]
Let $\Phi:\mathbb{R}\to\mathbb{R}$ be a $C^1$ convex function. 
Then there exists a constant $C_3=C_3(p)>0$ such that for every Lipschitz function $\eta$ with compact support in $B$, we have
\begin{equation}
\label{mothergsob}
\begin{split}
\sum_{i=1}^N &\int g''_{i,\varepsilon}(u_{x_i})\,\left|\left(\Phi(u_{x_j})\right)_{x_i}\right|^2\, \eta^2\, dx\\
&\le C_3\,\sum_{i=1}^N \int g''_{i,\varepsilon}(u_{x_i})\,|\Phi(u_{x_j})|^2\, |\eta_{x_i}|^2\, dx+C_3\,\int |(f_\varepsilon)_{x_j}|\, |\Phi'(u_{x_j})|\, |\Phi(u_{x_j})|\,\eta^2\, dx.\\
\end{split}
\end{equation}
\end{lm}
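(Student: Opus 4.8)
The plan is to imitate the proof of the previous lemma, taking advantage of the fact that here a derivative falls on the datum $f_\varepsilon$: this allows us to integrate the forcing term in \eqref{derivatag} by parts, which eliminates the need for the sponge term $\mathcal{S}(\eta)$ and, with it, for the structural hypothesis \eqref{astuce} and for the $C^2$ regularity of $\Phi$ used before. Thus the present estimate is genuinely easier than \eqref{motherg}.

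\emph{Step 1 (choice of test function).} In \eqref{derivatag} I would take $\psi=\eta^2\,\Phi(u_{x_j})\,\Phi'(u_{x_j})$, which by Lemma \ref{lm:approximation} (and, if $\Phi$ is only $C^1$, after the regularization of Step 3 below) is an admissible test function, being Lipschitz with compact support in $B$. Since $f_\varepsilon=f\ast\varrho_\varepsilon$ is smooth and $\psi$ has compact support, the forcing term can be rewritten as
\[
-\int f_\varepsilon\,\psi_{x_j}\,dx=\int (f_\varepsilon)_{x_j}\,\psi\,dx=\int (f_\varepsilon)_{x_j}\,\eta^2\,\Phi(u_{x_j})\,\Phi'(u_{x_j})\,dx,
\]
so that \eqref{derivatag} reads $\sum_i\int g''_{i,\varepsilon}(u_{x_i})\,u_{x_i\,x_j}\,\psi_{x_i}\,dx=-\int (f_\varepsilon)_{x_j}\,\psi\,dx$.

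\emph{Step 2 (Caccioppoli computation).} Expanding $\psi_{x_i}$ by the chain rule gives three contributions: $\eta^2\,\Phi'(u_{x_j})^2\,u_{x_i\,x_j}$, the second-order term $\eta^2\,\Phi(u_{x_j})\,\Phi''(u_{x_j})\,u_{x_i\,x_j}$, and the boundary term $2\,\eta\,\eta_{x_i}\,\Phi(u_{x_j})\,\Phi'(u_{x_j})$. Inserting this into the identity of Step 1, the first contribution produces exactly $\sum_i\int g''_{i,\varepsilon}(u_{x_i})\,\bigl|(\Phi(u_{x_j}))_{x_i}\bigr|^2\,\eta^2\,dx$ (using $\Phi'(u_{x_j})\,u_{x_i\,x_j}=(\Phi(u_{x_j}))_{x_i}$), the second contribution produces $\sum_i\int g''_{i,\varepsilon}(u_{x_i})\,\Phi(u_{x_j})\,\Phi''(u_{x_j})\,u_{x_i\,x_j}^2\,\eta^2\,dx\ge 0$ — nonnegative because $g''_{i,\varepsilon}\ge 0$ and $\Phi$ is convex and nonnegative (as in the previous lemma; this is the only place where the sign of $\Phi$ is used) — and hence is simply discarded, and the boundary term together with the forcing term are moved to the right and estimated. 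For the boundary term one writes $g''_{i,\varepsilon}(u_{x_i})\,u_{x_i\,x_j}\,\Phi'(u_{x_j})=g''_{i,\varepsilon}(u_{x_i})\,(\Phi(u_{x_j}))_{x_i}$ and applies Young's inequality, absorbing one half of $\sum_i\int g''_{i,\varepsilon}(u_{x_i})\,|(\Phi(u_{x_j}))_{x_i}|^2\,\eta^2\,dx$ into the left side and leaving a multiple of $\sum_i\int g''_{i,\varepsilon}(u_{x_i})\,|\Phi(u_{x_j})|^2\,|\eta_{x_i}|^2\,dx$; the forcing term is already of the form $\int |(f_\varepsilon)_{x_j}|\,|\Phi'(u_{x_j})|\,|\Phi(u_{x_j})|\,\eta^2\,dx$. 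Rearranging yields \eqref{mothergsob}.

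\emph{Step 3 (removing the smoothness of $\Phi$).} The computation above used $\Phi\in C^2$. For a general $C^1$ convex $\Phi$ one mollifies, setting $\Phi_k=\Phi\ast\varrho_{1/k}$: these are smooth and convex and converge to $\Phi$ in $C^1_{loc}(\mathbb{R})$. Inequality \eqref{mothergsob} holds for each $\Phi_k$, and since $u_{x_j}$ is bounded on $\operatorname{supp}\eta$ (Lemma \ref{lm:approximation}) all the integrands are uniformly bounded and converge pointwise as $k\to\infty$ — in particular $(\Phi_k(u_{x_j}))_{x_i}=\Phi_k'(u_{x_j})\,u_{x_i\,x_j}\to\Phi'(u_{x_j})\,u_{x_i\,x_j}$ — so dominated convergence gives \eqref{mothergsob} for $\Phi$. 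The only delicate points are recognizing that the second-order term now has a sign and may be dropped (rather than absorbed against a sponge term, as before), and checking admissibility of the test function; both are routine given Lemma \ref{lm:approximation}, and the mollification step is standard. I do not expect any serious obstacle here.
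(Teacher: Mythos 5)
Your proposal is correct and is essentially the paper's own argument: the paper too integrates the forcing term in \eqref{derivatag} by parts to obtain \eqref{derivatagsob}, then tests with $\psi=\zeta\,\Phi'(u_{x_j})$ and $\zeta=\eta^2\,\Phi(u_{x_j})$, drops the nonnegative term $\sum_i g''_{i,\varepsilon}(u_{x_i})\,u_{x_ix_j}^2\,\Phi''(u_{x_j})\,\zeta$, and closes with Young's inequality — your single substitution $\psi=\eta^2\,\Phi(u_{x_j})\,\Phi'(u_{x_j})$ is the composition of the paper's two steps. Your remark that the sign argument uses $\Phi\ge 0$ (so the stated hypothesis $\Phi:\mathbb{R}\to\mathbb{R}$ should implicitly carry $\Phi\ge 0$ or $\Phi\,\Phi''\ge 0$, as it does in every application) and your explicit mollification in Step~3 fill in details the paper waves away, but the route is the same.
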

\begin{proof}
Let us suppose for simplicity that $\Phi\in C^2$. If this were not the case, a standard smoothing argument will be needed, we leave the details to the reader.
\par
We start observing that equation \eqref{derivatag} can also be written as
\begin{equation}
\label{derivatagsob}
\begin{split}
\sum_{i=1}^N \int g''_{i,\varepsilon}(u_{x_i})\, u_{x_i\,x_j}\, \psi_{x_i}\, dx+\int (f_\varepsilon)_{x_j}\,\psi\,dx=0,\quad j=1,\dots,N.
\end{split}
\end{equation}
Then we take in \eqref{derivatagsob} the test function $\psi=\zeta\, \Phi'(u_{x_j})$ as before,
with $\Phi$ as in the statement and $\zeta$ a nonnegative Lipschitz function supported in $B$. We obtain
\[
\begin{split}
\sum_{i=1}^N \int g''_{i,\varepsilon}(u_{x_i})\, \left(\Phi(u_{x_j})\right)_{x_i}\, \zeta_{x_i}\, dx\le -\int (f_\varepsilon)_{x_j}\, \Phi'(u_{x_j})\, \zeta\, dx.
\end{split}
\] 
Finally, we take again $\zeta=\eta^2\, \Phi(u_{x_j})$,
to get
\[
\begin{split}
\sum_{i=1}^N \int g''_{i,\varepsilon}(u_{x_i})\, \left|\left(\Phi(u_{x_j})\right)_{x_i}\right|^2\, \eta^2\, dx&\le 2\, \sum_{i=1}^N\int g''_{i,\varepsilon}(u_{x_i})\, \left|\left(\Phi(u_{x_j})\right)_{x_i}\right|\,\Phi(u_{x_j})\,\eta\,|\eta_{x_i}|\, dx\\
&+ \int \left|(f_\varepsilon)_{x_j}\right|\, |\Phi'(u_{x_j})|\, |\Phi(u_{x_j})|\,\eta^2\, dx.\\
\end{split}
\]
By using Young inequality as before, we get
\[
\begin{split}
\sum_{i=1}^N \int g''_{i,\varepsilon}(u_{x_i})\, \left|\left(\Phi(u_{x_j})\right)_{x_i}\right|^2\, \eta^2\, dx&\le 4\, \sum_{i=1}^N\int g''_{i,\varepsilon}(u_{x_i})\, \Phi(u_{x_j})^2\,|\eta_{x_i}|^2\, dx\\
&+2\, \int |(f_\varepsilon)_{x_j}|\, |\Phi'(u_{x_j})|\, |\Phi(u_{x_j})|\,\eta^2\, dx.
\end{split}
\]
This concludes the proof.
\end{proof}

\subsection{A Sobolev estimate}
In what follows we set
\[
W_j=\delta^2+(|u_{x_j}|-\delta)^2_+.
\]
\begin{lm}
There exists a constant $C_4=C_4(p,\delta)>0$ such that for every Lipschitz function $\eta$ with compact support in $B$, we have
\begin{equation}
\label{sobog}
\begin{split}
\sum_{i=1}^N \int \left|\nabla W_i^\frac{p}{4}\right|^2\, \eta^2\, dx
&\le C_4\sum_{i,j=1}^N \int W_i^\frac{p-2}{2}\, W_j\,|\eta_{x_i}|^2\,dx+C_4\sum_{j=1}^N\int |(f_\varepsilon)_{x_j}|\, \sqrt{W_j}\, \eta^2\, dx.
\end{split}
\end{equation}
\end{lm}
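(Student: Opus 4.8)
The plan is to bypass both Caccioppoli inequalities \eqref{motherg} and \eqref{mothergsob} and instead test the differentiated equation \eqref{derivatagsob} directly with $\psi=\eta^2\,u_{x_j}$, thereby reducing \eqref{sobog} to an estimate for $\sum_{i}\int g''_{i,\varepsilon}(u_{x_i})\,|\nabla u_{x_i}|^2\,\eta^2\,dx$. The conceptual point is that the ``missing'' mixed second derivatives are controlled only \emph{after} summing over all directions, by exchanging the order of summation and using the symmetry $u_{x_ix_j}=u_{x_jx_i}$.

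I would first record two elementary pointwise inequalities, for which the normalization $\delta=1+\max_k\delta_k$ in \eqref{delta} is essential (it forces $\delta-\delta_i\ge 1$). From $(|u_{x_i}|-\delta_i)_+\le\sqrt2\,\sqrt{W_i}$ (immediate from the definition of $W_i$) and $\varepsilon\le 1\le W_i^{(p-2)/2}$ one gets $g''_{i,\varepsilon}(u_{x_i})=(p-1)\,(|u_{x_i}|-\delta_i)_+^{p-2}+\varepsilon\le C(p)\,W_i^{\frac{p-2}{2}}$ pointwise; while on the set $\{|u_{x_i}|\ge\delta\}$ one has $|u_{x_i}|-\delta_i=(|u_{x_i}|-\delta)+(\delta-\delta_i)\ge\frac1\delta\,\sqrt{W_i}$, hence there $g''_{i,\varepsilon}(u_{x_i})\ge(p-1)\,\delta^{-(p-2)}\,W_i^{\frac{p-2}{2}}$. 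Since $\nabla W_i^{p/4}$ vanishes a.e.\ outside $\{|u_{x_i}|\ge\delta\}$ and a direct computation gives $|\nabla W_i^{p/4}|^2=\frac{p^2}{4}\,(|u_{x_i}|-\delta)_+^2\,W_i^{\frac p2-2}\,|\nabla u_{x_i}|^2\le\frac{p^2}{4}\,W_i^{\frac{p-2}{2}}\,|\nabla u_{x_i}|^2$ a.e.\ (using $(|u_{x_i}|-\delta)_+^2\le W_i$), the lower bound above yields the pointwise estimate
\[
|\nabla W_i^{p/4}|^2\le C(p,\delta)\,g''_{i,\varepsilon}(u_{x_i})\,|\nabla u_{x_i}|^2\qquad\text{a.e.\ in }B,\qquad i=1,\dots,N.
\]

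Next, for each fixed $j$ I would insert the admissible test function $\psi=\eta^2\,u_{x_j}$ in \eqref{derivatagsob} (admissible since $u=u_\varepsilon\in C^2_{loc}(B)$ by Lemma \ref{lm:approximation}). Writing $(\eta^2 u_{x_j})_{x_i}=2\,\eta\,\eta_{x_i}\,u_{x_j}+\eta^2\,u_{x_i x_j}$ and using $u_{x_ix_j}=u_{x_jx_i}$, a Young inequality absorbing the term $\tfrac12\sum_i\int g''_{i,\varepsilon}(u_{x_i})\,u_{x_ix_j}^2\,\eta^2$ into the left-hand side gives
\[
\sum_{i=1}^N\int g''_{i,\varepsilon}(u_{x_i})\,u_{x_i x_j}^2\,\eta^2\,dx\le 4\sum_{i=1}^N\int g''_{i,\varepsilon}(u_{x_i})\,u_{x_j}^2\,|\eta_{x_i}|^2\,dx+2\int|(f_\varepsilon)_{x_j}|\,|u_{x_j}|\,\eta^2\,dx .
\]
The upper bound for $g''_{i,\varepsilon}$ together with $u_{x_j}^2\le 2\,W_j$ and $|u_{x_j}|\le\sqrt2\,\sqrt{W_j}$ (again immediate from the definition of $W_j$) bounds the right-hand side by $C(p,\delta)\,\big(\sum_{i}\int W_i^{\frac{p-2}{2}}W_j\,|\eta_{x_i}|^2\,dx+\int|(f_\varepsilon)_{x_j}|\,\sqrt{W_j}\,\eta^2\,dx\big)$.

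Finally I would sum over $j=1,\dots,N$. Exchanging the order of summation and using once more $u_{x_ix_j}=u_{x_jx_i}$ one finds $\sum_{j}\sum_{i}g''_{i,\varepsilon}(u_{x_i})\,u_{x_i x_j}^2=\sum_{i}g''_{i,\varepsilon}(u_{x_i})\,|\nabla u_{x_i}|^2$; combining this with the pointwise bound of the second step gives $\sum_{i}\int|\nabla W_i^{p/4}|^2\,\eta^2\,dx\le C(p,\delta)\sum_{i}\int g''_{i,\varepsilon}(u_{x_i})\,|\nabla u_{x_i}|^2\,\eta^2\,dx$, and then the estimate of the third step (summed over $j$) yields \eqref{sobog} with $C_4=C_4(p,\delta)$. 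The main obstacle is conceptual rather than computational: one cannot estimate a single $|\nabla W_j^{p/4}|^2$ through ``its own'' Caccioppoli inequality, because the mixed derivative $u_{x_ix_j}$ appearing in it lives on the degeneracy set $\{|u_{x_i}|<\delta\}$, where the natural weight $g''_{i,\varepsilon}(u_{x_i})$ collapses to $\varepsilon$; summing over $j$ first and relabelling reassigns to each $u_{x_ix_j}^2$ the non-degenerate weight $g''_{i,\varepsilon}(u_{x_i})\sim W_i^{(p-2)/2}$ valid on $\{|u_{x_i}|\ge\delta\}$, at which point the pointwise bound closes the estimate.
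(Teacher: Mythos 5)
Your proof is correct and takes essentially the same route as the paper: choosing $\Phi(t)=t$ in the paper's general lemma \eqref{mothergsob} amounts precisely to testing \eqref{derivatagsob} with $\psi=\eta^2 u_{x_j}$, so you have not actually bypassed that lemma but rederived its special case, after which the pointwise bounds relating $g''_{i,\varepsilon}$ to powers of $W_i$ and the final summation over $j$ are the same computations the paper performs, only reorganised (you sum over $j$ first and then invoke one global pointwise inequality, whereas the paper applies the pointwise inequality termwise and then sums). The reorganisation is harmless and the normalisation $\delta-\delta_i\ge 1$ is used correctly; your concluding remark about why a fixed-$j$ estimate alone cannot control $|\nabla W_j^{p/4}|^2$ is also accurate.
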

\begin{proof}
We choose the function $\Phi(t)=t$ in \eqref{mothergsob}. First observe that
\[
\left|\left(\Phi(u_{x_j})\right)_{x_i}\right|^2=u_{x_i\,x_j}^2,
\]
and that by Lemma \ref{lm:second_dessous} for $|t|\ge \delta$
\begin{equation}
\label{basic}
\begin{split}
g''_{i,\varepsilon}(t)&\ge (p-1)\,\left(\frac{\delta-\delta_i}{\delta}\right)^{p-2}\,\left(\delta^2+(|t|-\delta)_+^2\right)^\frac{p-2}{2}\\
&\ge  \frac{1}{\delta^{p-2}}\,\left(\delta^2+(|t|-\delta)_+^2\right)^\frac{p-4}{2}\,(|t|-\delta)_+^2.
\end{split}
\end{equation}
In the second inequality above we also used that $p\ge 2$ and $\delta-\delta_i\ge 1$.
Then, by \eqref{basic} we have\footnote{Observe that the inequality holds true everywhere, not only on $A_i$, since $W_i$ is constant outside $A_i$.}
\[
\begin{split}
g''_{i,\varepsilon}(u_{x_i})\,\left|\left(\Phi(u_{x_j})\right)_{x_i}\right|^2
&\ge \frac{1}{\delta^{p-2}}\,W_i^\frac{p-4}{2}\,(|u_{x_i}|-\delta)_+^2\, u_{x_i\,x_j}^2=c\, \left|\partial_{x_j} W_i^\frac{p}{4}\right|^2,
\end{split}
\]
where $c=c(\delta,p)>0$. We further observe that
\[
|\Phi(u_{x_j})|=|u_{x_j}|\le \sqrt{2}\,\sqrt{W_j},
\]
and\footnote{We use that $t-\delta_i=(t-\delta)+(\delta-\delta_i)\le (t-\delta)_++\delta$, which implies
\[
(t-\delta_i)_+\le(t-\delta)_++\delta.
\]}
\begin{equation}
\label{kiven}
g''_{i,\varepsilon}(u_{x_i})= (p-1)\, (|u_{x_i}|-\delta_{i})_+^{p-2}+\varepsilon\le c\,W_i^\frac{p-2}{2}, 
\end{equation}
where $c=c(p)>0$. Then we get the desired result by summing \eqref{mothergsob} over $j=1,\dots,N$.
\end{proof}
In what follows, we will use for simplicity the notation
\[
\fint_E \varphi\, dx:=\frac{1}{|E|}\,\int_E\varphi\, dx.
\]
\begin{coro}
\label{coro:sobolev_coro}
There exists a constant $C'_4=C'_4(p,\delta,N)>0$ such that for every pair of concentric balls $B_{R_0}\Subset B_{\rho_0}\Subset B$, we have
\begin{equation}
\label{sobolev_coro}
\begin{split}
\sum_{j=1}^N \frac{1}{R_0^{N-2}}\,\int_{B_{R_0}} \left|\nabla W_j^\frac{p}{4}\right|^2\, dx&\le C\,\left(\frac{\rho_0}{R_0}\,\right)^{N-2}\,\left(\frac{\rho_0}{\rho_0-R_0}\right)^2\,\sum_{j=1}^N\fint_{B_{\rho_0}} W^\frac{p}{2}_j\,dx\\
&+C\,\rho_0^{\frac{2}{p-1}-N+2}\, \sum_{j=1}^N \int_{B_{\rho_0}} |(f_\varepsilon)_{x_j}|^{p'}\, dx.
\end{split}
\end{equation}
\end{coro}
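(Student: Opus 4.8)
The plan is to specialize the Sobolev estimate \eqref{sobog} to a cut-off function tailored to the pair $B_{R_0}\Subset B_{\rho_0}$ and then to estimate separately the two terms on its right-hand side, dividing the resulting inequality by $R_0^{N-2}$ at the end. So I first choose a Lipschitz cut-off $\eta$ with $0\le\eta\le 1$, $\eta\equiv 1$ on $B_{R_0}$, $\mathrm{supp}\,\eta\subset B_{\rho_0}$, $\mathrm{supp}\,\nabla\eta\subset B_{\rho_0}\setminus B_{R_0}$, and $|\nabla\eta|\le 2/(\rho_0-R_0)$, and I plug it into \eqref{sobog}. Since $\eta\equiv 1$ on $B_{R_0}$ and $\eta^2\le\mathbf{1}_{B_{\rho_0}}$, the left-hand side of \eqref{sobog} dominates $\sum_{j=1}^N\int_{B_{R_0}}|\nabla W_j^{p/4}|^2\,dx$, so the whole task reduces to bounding the two right-hand terms of \eqref{sobog} by $R_0^{N-2}$ times the right-hand side of \eqref{sobolev_coro}.

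For the first term I would use $|\eta_{x_i}|^2\le 4/(\rho_0-R_0)^2$ on $B_{\rho_0}$ together with the elementary Young inequality $W_i^{(p-2)/2}W_j\le\frac{p-2}{p}W_i^{p/2}+\frac{2}{p}W_j^{p/2}$ (trivial when $p=2$), obtaining a bound of the form $\frac{C(N,p,\delta)}{(\rho_0-R_0)^2}\sum_{j=1}^N\int_{B_{\rho_0}}W_j^{p/2}\,dx$. Writing $\int_{B_{\rho_0}}=|B_{\rho_0}|\fint_{B_{\rho_0}}$ with $|B_{\rho_0}|=|B_1|\,\rho_0^N$, and then dividing the whole inequality by $R_0^{N-2}$, this turns into exactly $C\left(\frac{\rho_0}{R_0}\right)^{N-2}\left(\frac{\rho_0}{\rho_0-R_0}\right)^2\sum_{j=1}^N\fint_{B_{\rho_0}}W_j^{p/2}\,dx$, the first term in \eqref{sobolev_coro}.

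For the second term I would peel off a copy of $W_j^{p/2}$ that can be absorbed into the previous estimate: writing $\sqrt{W_j}=(W_j^{p/2})^{1/p}$ and applying Young's inequality with exponents $p$ and $p'$, for every $\gamma>0$ one has $|(f_\varepsilon)_{x_j}|\sqrt{W_j}\le\frac{\gamma^p}{p}W_j^{p/2}+\frac{\gamma^{-p'}}{p'}|(f_\varepsilon)_{x_j}|^{p'}$. Choosing $\gamma^p$ of the order of $(\rho_0-R_0)^{-2}$, the $W_j^{p/2}$-term is of the same size as the term treated above, while $\gamma^{-p'}$ is then of the order of $(\rho_0-R_0)^{2p'/p}=(\rho_0-R_0)^{2/(p-1)}\le\rho_0^{2/(p-1)}$; after dividing by $R_0^{N-2}$ this produces the factor $\rho_0^{\frac{2}{p-1}-N+2}$ in front of $\sum_{j=1}^N\int_{B_{\rho_0}}|(f_\varepsilon)_{x_j}|^{p'}\,dx$, since $2p'/p=\frac{2}{p-1}$. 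Adding the two estimates yields \eqref{sobolev_coro}.

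I expect the only point requiring care to be the bookkeeping of the radial powers: one must calibrate the Young parameter $\gamma$ so that it simultaneously renders the spurious $W_j^{p/2}$-contribution of the $f_\varepsilon$-term absorbable into the first term and produces precisely the exponent $\frac{2}{p-1}-N+2$ on $\rho_0$, and one must track how the normalization by $R_0^{N-2}$ on the left converts the integrals over $B_{\rho_0}$ into the averages $\fint_{B_{\rho_0}}$ together with the geometric factors $(\rho_0/R_0)^{N-2}$ and $(\rho_0/(\rho_0-R_0))^2$. Apart from invoking \eqref{sobog}, there is no analytic difficulty: everything comes down to Young's inequality and the scaling $|B_{\rho_0}|=|B_1|\,\rho_0^N$.
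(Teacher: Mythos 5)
Your proof is correct and takes essentially the same route as the paper: insert the standard Lipschitz cut-off between $B_{R_0}$ and $B_{\rho_0}$ into \eqref{sobog}, and control the two resulting right-hand-side terms by Young's inequality. The paper handles the first term with H\"older's inequality on the integrals rather than pointwise Young, and calibrates the Young parameter on the $f_\varepsilon$-term so that the spurious $W_j^{p/2}$ contribution carries $1/\rho_0^2$ rather than your $1/(\rho_0-R_0)^2$; both choices are fine. One small bookkeeping remark, which applies to the paper as well as to your write-up: after dividing by $R_0^{N-2}$ the $f_\varepsilon$-term comes out with the factor $\rho_0^{2/(p-1)}/R_0^{N-2}$, which is $\rho_0^{2/(p-1)-N+2}$ only up to a multiplicative factor $(\rho_0/R_0)^{N-2}$. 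This is harmless in the application (where $\rho_0/R_0$ is a fixed ratio, $\rho_0=3r_0$, $R_0=2r_0$), but strictly speaking the constant in \eqref{sobolev_coro} absorbs a power of $\rho_0/R_0$.
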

\begin{proof}
It is sufficient to insert the test function
\[
\eta(x)=\min\left\{1,\frac{(\rho_0-|x|)_+}{\rho_0-R_0}\right\},
\]
in \eqref{sobog} and then use H\"older and Young inequalities in the right-hand side. These give
\[
\sum_{i,j=1}^N \int W_i^\frac{p-2}{2}\, W_j\,|\eta_{x_i}|^2\,dx\le \frac{1}{(\rho_0-R_0)^2}\,\sum_{i,j=1}^N\,\left(\int_{B_{\rho_0}} W_i^\frac{p}{2}\,dx\right)^\frac{p-2}{p}\,\left(\int_{B_{\rho_0}} W_j^\frac{p}{2}\,dx\right)^\frac{2}{p},
\]
and
\[
\sum_{j=1}^N\int |(f_\varepsilon)_{x_j}|\, \sqrt{W_j}\, \eta^2\, dx\le C\,\sum_{j=1}^N \rho_0^\frac{2}{p-1}\, \int_{B_{\rho_0}} |(f_\varepsilon)_{x_j}|^{p'}\, dx+\frac{C}{\rho_0^2}\,\sum_{j=1}^N \int_{B_{\rho_0}} W_j^\frac{p}{2}\, dx,
\]
which concludes the proof.
\end{proof}
\begin{oss}[Uniform Sobolev estimate]
\label{oss:sobolev}
From the previous result, we obtain that if $f\in W^{1,p'}_{loc}(\Omega)$, then for every $i=1,\dots,N$ the function $W_i^{p/4}$ enjoys a $W^{1,2}_{loc}(B)$ estimate independent of $\varepsilon$, thanks to \eqref{uniformeg} and 
\[
\|f_\varepsilon\|_{W^{1,p'}(B_{\rho_0})}\le \|f\|_{W^{1,p'}(2\,B)}.
\]
\end{oss}
\subsection{Power-type subsolutions}
We still use the notation 
\[
W_j=\delta^2+(|u_{x_j}|-\delta)^2_+.
\]
Then we have the following result.
\begin{lm}
\label{lm:caccioweight}
There exists a constant $C_5=C_5(p)>0$ such that for every $s\ge 0$ and every Lipschitz function $\eta$ with compact support in $B$, we have
\begin{equation}
\label{baseg}
\begin{split}
\sum_{i=1}^N \int g''_{i,\varepsilon}(u_{x_i})\,\left|\left( W_j^\frac{s+1}{2}\right)_{x_i}\right|^2\, \eta^2\, dx
&\le C_5\,\sum_{i=1}^N \int W_i^\frac{p-2}{2}\, W_j^{s+1}\,|\nabla\eta|^2\,dx\\
&+C_5\,(s+1)^2\, \int |f_\varepsilon|^2\, W_j^{s}\, \eta^2\, dx,\qquad j=1,\dots,N.
\end{split}
\end{equation}
\end{lm}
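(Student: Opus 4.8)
The strategy is to apply the Caccioppoli-type inequality \eqref{motherg} with the choice $\Phi(t)=\Phi_s(t)$ equal to (a smoothed, truncated version of) $W_j^{(s+1)/2}$ viewed as a function of $t=u_{x_j}$, and then estimate the three terms on the right-hand side of \eqref{motherg}. More precisely, for $|t|\ge\delta$ one sets $\Phi_s(t)=\bigl(\delta^2+(|t|-\delta)^2\bigr)^{(s+1)/2}$ and extends it to be constant $\equiv\delta^{s+1}$ for $|t|\le\delta$; this $\Phi_s$ is convex, nonnegative, $C^2$ (for $p>2$ and after the usual smoothing near $|t|=\delta$ if $s$ is small), and satisfies the flatness condition \eqref{astuce}, so \eqref{motherg} applies. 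Note that on $A_j=\{|u_{x_j}|\ge\delta\}$ we have $\Phi_s(u_{x_j})=W_j^{(s+1)/2}$ and $\bigl(\Phi_s(u_{x_j})\bigr)_{x_i}=\bigl(W_j^{(s+1)/2}\bigr)_{x_i}$, while off $A_j$ the function $W_j$ is constant so both sides of \eqref{baseg} receive no contribution there; hence it suffices to bound the right-hand side of \eqref{motherg}.

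\textbf{Estimating the three terms.} For the first term on the right of \eqref{motherg}, one uses the upper bound \eqref{kiven}, namely $g''_{i,\varepsilon}(u_{x_i})\le c\,W_i^{(p-2)/2}$, together with $|\Phi_s(u_{x_j})|^2=W_j^{s+1}$ and $|\eta_{x_i}|^2\le|\nabla\eta|^2$, to produce exactly the first term on the right of \eqref{baseg}, with a constant depending only on $p$. The third term $\int_{A_j}\Phi_s(u_{x_j})^2\,|\eta_{x_j}|^2\,dx=\int_{A_j}W_j^{s+1}\,|\eta_{x_j}|^2\,dx$ is absorbed into the first term of \eqref{baseg} as well, since $W_i^{(p-2)/2}\ge\delta^{p-2}\ge1$ (recall $\delta\ge1$, $p\ge2$), so $W_j^{s+1}\le\delta^{-(p-2)}\,W_i^{(p-2)/2}\,W_j^{s+1}$; taking $i=j$ (or summing) controls it by the $\nabla\eta$ term on the right of \eqref{baseg} up to a $p$-dependent constant. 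The middle term requires computing $\Phi_s'$ and $\Phi_s''$: for $|t|\ge\delta$ one has $|\Phi_s'(t)|\le C(s+1)\,W_j^{(s+1)/2-1}(|t|-\delta)\le C(s+1)\,W_j^{s/2}$ (using $(|t|-\delta)^2\le W_j$), so $\Phi_s'(u_{x_j})^2\le C(s+1)^2\,W_j^{s}$; similarly $\Phi_s''(u_{x_j})\,\Phi_s(u_{x_j})\le C(s+1)^2\,W_j^{s}$ by an analogous elementary estimate on the second derivative of $W_j^{(s+1)/2}$. Plugging these bounds into $C_3\int_{A_j}|f_\varepsilon|^2\,[\Phi_s'(u_{x_j})^2+\Phi_s''(u_{x_j})\Phi_s(u_{x_j})]\,\eta^2\,dx$ yields precisely the second term $C_5(s+1)^2\int|f_\varepsilon|^2\,W_j^{s}\,\eta^2\,dx$ of \eqref{baseg}.

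\textbf{Main obstacle.} The genuinely delicate point is the verification of the elementary pointwise inequalities for $\Phi_s'$ and $\Phi_s''$ uniformly in $s\ge0$, and more importantly the smoothness issue: for small $s$ the function $t\mapsto\bigl(\delta^2+(|t|-\delta)_+^2\bigr)^{(s+1)/2}$ is only $C^{1,1}$ near $|t|=\delta$, not $C^2$, so $\Phi_s$ does not literally satisfy the hypotheses of the lemma giving \eqref{motherg}. This is handled, as is standard, by approximating $\Phi_s$ by a sequence of genuinely $C^2$ convex functions still satisfying \eqref{astuce} (e.g. by rounding off the corner at $|t|=\delta$), applying \eqref{motherg} to each, and passing to the limit — the convergences being justified by the uniform $p$-growth estimates on $u_{x_i\,x_j}$ coming from Lemma \ref{lm:approximation} and the energy bound \eqref{uniformeg}. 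Once this is in place, assembling the three estimates above and summing nothing further (the statement is for fixed $j$) gives \eqref{baseg} with $C_5=C_5(p)$.
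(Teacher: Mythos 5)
Your proposal is correct and follows essentially the same route as the paper: the same choice $\Phi(t)=\bigl(\delta^2+(|t|-\delta)_+^2\bigr)^{(s+1)/2}$ in \eqref{motherg}, the same use of \eqref{kiven} for the first term, the same elementary bound $\Phi'(t)^2+\Phi''(t)\Phi(t)\le C(s+1)^2 W_j^s$ for the $f_\varepsilon$-term, the same absorption of the $|\eta_{x_j}|^2$-term via $W_i\ge 1$, and the same remark (present in a footnote of the paper) that $\Phi$ is only $C^{1,1}$ near $|t|=\delta$ and one passes to the limit through a smoothing.
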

\begin{proof}
In equation \eqref{motherg} we make the choice\footnote{Observe that this function is not $C^2$, but only $C^{1,1}$ near $t=\delta$ ot $t=-\delta$. This is not a big issue, since in any case $\Phi''$ stays bounded as $|t|\to \delta$, thus we can use \eqref{motherg} for a regularization of $\Phi$ and then pass to the limit at the end.}
\[
\Phi(t)=\Big(\delta^2+(|t|-\delta)^2_+\Big)^\frac{s+1}{2},
\]
for $s\ge 0$ which satisfies hypothesis \eqref{astuce}.
Observe that by definition we have
\[
\Phi(u_{x_j})=W_j^\frac{s+1}{2},
\]
so that
\[
\left|\left(\Phi(u_{x_j})\right)_{x_i}\right|^2=\left|\left( W_j^\frac{s+1}{2}\right)_{x_i}\right|^2.
\]
Thus the left-hand side of \eqref{motherg} coincides with
\[
\begin{split}
\sum_{i=1}^N \int g_{i,\varepsilon}''(u_{x_i})\, \left|\left( W_j^\frac{s+1}{2}\right)_{x_i}\right|^2\, \eta^2\,dx.
\end{split}
\]
We now come to the right-hand side: 
\[
\begin{split}
\sum_{i=1}^N \int g''_{i,\varepsilon}(u_{x_i})\,|\Phi(u_{x_j})|^2\, |\eta_{x_i}|^2\, dx&=\sum_{i=1}^N \int g''_{i,\varepsilon}(u_{x_i})\, W_j^{s+1}\,|\eta_{x_i}|^2\, dx\\
&\le C\, \sum_{i=1}^N \int W_i^\frac{p-2}{2}\, W_j^{s+1}\,|\eta_{x_i}|^2\, dx,
\end{split}
\]
thanks to \eqref{kiven}.
For the other two terms, by using the definition of $\Phi$ we simply have
\[
\begin{split}
\int_{A_j} |f_\varepsilon|^2\, \Big[\Phi'(u_{x_j})^2+\Phi''(u_{x_j})\, \Phi(u_{x_j})\Big]\,\eta^2\, dx&+\int_{A_j} \Phi(u_{x_j})^2\, |\eta_{x_j}|^2\, dx\\
&\le C\,(s+1)^2\,\int |f_\varepsilon|^2\, W_j^{s}\, \eta^2\, dx\\
&+\sum_{i=1}^N \int W_i^\frac{p-2}{2}\,W_j^{s+1}\, |\eta_{x_j}|^2\, dx,
\end{split}
\]
where we used that
\[
\Big[\Phi'(t)^2+\Phi''(t)\,\Phi(t)\Big]\le C\,(s+1)^2\, \Big(\delta^2+(|t|-\delta)^2_+\Big)^{s}=C\,(1+s)^2\, \Phi(t)^\frac{2\,s}{s+1}, 
\]
and $W_j^{s+1}\le \sum_{i=1}^N\, W_i^\frac{p-2}{2}\, W_j^{s+1}$, which follows from $W_i\ge 1$.
\end{proof}
In particular, we get an estimate for the {\it diagonal terms}, corresponding to $i=j$.
\begin{coro}
There exists a constant $C_6=C_6(p,\delta)>0$ such that for every $s\ge 0$ and every Lipschitz function $\eta$ with compact support in $\Omega$, we have
\begin{equation}
\label{basegg}
\begin{split}
\int \left|\left(W_j^{\frac{p}{4}+\frac{s}{2}}\right)_{x_j}\right|^2\, \eta^2\, dx
&\le C_6\,\sum_{i=1}^N \int W_i^\frac{p-2}{2}\, W_j^{s+1}\,|\nabla\eta|^2\,dx\\
&+C_6\,(s+1)^2\, \int |f_\varepsilon|^2\, W_j^{s}\, \eta^2\, dx,\qquad j=1,\dots,N.
\end{split}
\end{equation}
\end{coro}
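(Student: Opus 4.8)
The plan is to isolate the diagonal summand $i=j$ in the Caccioppoli inequality \eqref{baseg} and to check that, on this single term, the weight $g''_{j,\varepsilon}(u_{x_j})$ already supplies the missing power of $W_j$ needed to reconstruct $(W_j^{p/4+s/2})_{x_j}$. First I would record the elementary chain-rule identity: on the set $A_j=\{|u_{x_j}|\ge\delta\}$ one has $(W_j)_{x_j}=2\,(|u_{x_j}|-\delta)_+\,(u_{x_j}/|u_{x_j}|)\,u_{x_jx_j}$, hence for every exponent $\alpha>0$
\[
\left|\left(W_j^{\alpha}\right)_{x_j}\right|^2=4\,\alpha^2\,W_j^{2\alpha-2}\,(|u_{x_j}|-\delta)_+^2\,u_{x_jx_j}^2,
\]
while outside $A_j$ the function $W_j$ is constant and both sides vanish. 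Taking $\alpha=(s+1)/2$ and $\alpha=p/4+s/2$ and dividing, one finds that wherever the quantities are nonzero
\[
\left|\left(W_j^{p/4+s/2}\right)_{x_j}\right|^2=\frac{(p/2+s)^2}{(s+1)^2}\,W_j^{(p-2)/2}\,\left|\left(W_j^{(s+1)/2}\right)_{x_j}\right|^2.
\]

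The key point is then the lower bound for the diagonal weight coming from the first inequality in \eqref{basic}: since $\delta-\delta_j\ge 1$ and $p\ge 2$, one gets $g''_{j,\varepsilon}(u_{x_j})\ge \delta^{-(p-2)}\,W_j^{(p-2)/2}$ on $A_j$. Combining this with the identity above, and using that the map $s\mapsto (s+1)/(s+p/2)$ is nondecreasing on $[0,\infty)$ for $p\ge 2$ (so that $(s+1)/(s+p/2)\ge 2/p$ for all $s\ge 0$), I obtain the pointwise estimate
\[
\left|\left(W_j^{p/4+s/2}\right)_{x_j}\right|^2\le \frac{p^2\,\delta^{p-2}}{4}\;g''_{j,\varepsilon}(u_{x_j})\,\left|\left(W_j^{(s+1)/2}\right)_{x_j}\right|^2,
\]
valid a.e.\ in $B$ (trivially so off $A_j$, where $W_j$ is constant).

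To finish, I would integrate this inequality against $\eta^2$, bound the single term $g''_{j,\varepsilon}(u_{x_j})\,|(W_j^{(s+1)/2})_{x_j}|^2$ from above by the full sum $\sum_{i=1}^N g''_{i,\varepsilon}(u_{x_i})\,|(W_j^{(s+1)/2})_{x_i}|^2$ (all the added terms being nonnegative), and then apply \eqref{baseg} of Lemma \ref{lm:caccioweight}. This yields \eqref{basegg} with $C_6=(p^2\delta^{p-2}/4)\,C_5$. I do not anticipate any real obstacle: the whole argument is bookkeeping of exponents, and the only mild subtlety --- the non-$C^2$ behaviour of $\Phi(t)=(\delta^2+(|t|-\delta)_+^2)^{(s+1)/2}$ at $|t|=\delta$ --- is already absorbed in Lemma \ref{lm:caccioweight} and is in any case harmless here, since every quantity under consideration vanishes on the complement of $A_j$.
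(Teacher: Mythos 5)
Your proof is correct and follows essentially the same route as the paper: isolate the diagonal term $i=j$ on the left of \eqref{baseg}, use the lower bound on $g''_{j,\varepsilon}$ from Lemma \ref{lm:second_dessous} (equivalently, the first inequality in \eqref{basic}) to produce the weight $W_j^{(p-2)/2}$, and combine it with the chain-rule identity $W_j^{(p-2)/2}\,|(W_j^{(s+1)/2})_{x_j}|^2=\bigl(\tfrac{s+1}{p/2+s}\bigr)^2\,|(W_j^{p/4+s/2})_{x_j}|^2$ together with the bound $(s+1)/(p/2+s)\ge 2/p$. The bookkeeping and the constant $C_6=(p^2\delta^{p-2}/4)\,C_5$ all check out.
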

\begin{proof}
We fix $j$, by keeping only the term $i=j$ and dropping all the others in the left-hand side of \eqref{baseg}, we get
\[
\begin{split}
\int_{A_j} g_{j,\varepsilon}''(u_{x_j})\,\left|\left(W_j^\frac{s+1}{2}\right)_{x_j}\right|^2\, \eta^2\, dx&\le C_5\, \sum_{i=1}^N \int W_i^\frac{p-2}{2}\,W_j^{s+1}\,|\nabla\eta|^2\, dx\\
&+C_5\,(s+1)^2\, \int |f_\varepsilon|^2\, W_j^{s}\, \eta^2\, dx.
\end{split}
\]
We now observe that again by Lemma \ref{lm:second_dessous} on $A_j$ we have
\[
\begin{split}
g_{j,\varepsilon}''(u_{x_j})
&\ge \frac{p-1}{\delta^{p-2}}\,\left[\delta^2+(|u_{x_j}|-\delta)^2_+\right]^\frac{p-2}{2}=\frac{p-1}{\delta^{p-2}}\,W_j^\frac{p-2}{2},
\end{split}
\] 
and that
\[
W_j^\frac{p-2}{2}\,\left|\left(W_j^\frac{s+1}{2}\right)_{x_j}\right|^2=\left(\frac{2+2\,s}{p+2\,s}\right)^2\,\left|\left(W_j^{\frac{p}{4}+\frac{s}{2}}\right)_{x_j}\right|^2\ge \left(\frac{2}{p}\right)^2\, \left|\left(W_j^{\frac{p}{4}+\frac{s}{2}}\right)_{x_j}\right|^2,
\]
so that the conclusion follows.
\end{proof}

\section{Proof of Theorem A}
\label{sec:2d}

The core of the proof of Theorem A is the a priori estimate of Proposition \ref{lm:2d} below. We postpone its proof and proceed with that of Theorem A.
\begin{proof}
Let $\Omega'\Subset\Omega$ and set $d=\mathrm{dist}(\Omega',\partial \Omega)$. We take $r_0\le d/10$, then $\Omega'$ can be covered by a finite number of balls centered at points in $\Omega'$ and having radius $r_0$.
Let $B_{r_0}:=B_{r_0}(x_0)\Subset\Omega$ be one of these balls, it is clearly sufficient to show that
\[
\|\nabla U\|_{L^\infty(B_{r_0})}<+\infty.
\]
To this aim we take the solution $u_\varepsilon$ of the regularized problem \eqref{approximated} in the ball $B:=B_{4\,r_0}(x_0)$. Observe that by construction we have $2\,B=B_{8\,r_0}(x_0)\Subset\Omega$. Then there exists $\varepsilon_0=\varepsilon_0(d)>0$ such that for every $0<\varepsilon\le \varepsilon_0$
\[
\|f_\varepsilon\|_{W^{1,p'}(B)}\le \|f\|_{W^{1,p'}(2\,B)}.
\]
By using estimate \eqref{componentig} below with $R_0=2\,r_0$ and $\rho_0=3\,r_0$ we get
\begin{equation}
\label{quasi0}
\|\nabla u_\varepsilon\|_{L^\infty(B_{r_0})}\le C,\quad \mbox{ for every } 0<\varepsilon\le\varepsilon_0,
\end{equation}
where $C>0$ depends only on $p$, $\delta$, $r_0$, $\|f\|_{W^{1,p'}(2\,B)}$ and the constant $C_2$ in \eqref{uniformeg}. We then observe that by Lemma \ref{lm:convergence}, we can find a sequence $\{\varepsilon_k\}_{k\in\mathbb{N}}$ converging to $0$ and such that $\{u_{\varepsilon_k}\}$ converges strongly in $L^p(B)$ and weakly in $W^{1,p}(B)$ to a solution $\widetilde u$ of 
\[
\min\{\mathfrak{F}(\varphi;B)\, :\, \varphi-U\in W^{1,p}(B)\}.
\]
By lower semicontinuity we have that $\widetilde u$ still satisfies \eqref{quasi0}. It is now sufficient to use Lemma \ref{propagationofregularity} in order to transfer this Lipschitz estimate from $\widetilde u$ to the original local minimizer $U$. This concludes the proof.
\end{proof}
\begin{prop}[Uniform Lipschitz estimate, $N=2$]
\label{lm:2d}
Let $N=2$ and $p\ge 2$. 
Then for every triple of concentric balls $B_{r_0}\Subset B_{R_0}\Subset B_{\rho_0}\Subset B$ and $i=1,2$ we have
\begin{equation}
\label{componentig}
\|(u_\varepsilon)_{x_i}\|_{L^\infty(B_{r_0})}\le C_7\,\left(\frac{R_0}{R_0-r_0}\right)^4\,\mathcal{J}(u_\varepsilon,f_\varepsilon;R_0,\rho_0)^2\,\left[\left(\fint_{B_{R_0}} \left|(u_\varepsilon)_{x_i}\right|^p\, dx\right)^\frac{1}{p}+\delta\right],
\end{equation}
where $C_7=C_7(p,\,\delta)>0$ is a constant that only depends on $p$ and $\delta$ and
\begin{equation}
\label{J}
\begin{split}
\mathcal{J}(u_\varepsilon,f_\varepsilon;R_0,\rho_0)&=\left(\frac{\rho_0}{\rho_0-R_0}\right)^2\,\left[\fint_{B_{\rho_0}} |\nabla u_\varepsilon|^p\,dx+\delta^p\right]\\
&+\rho_0^{\frac{2}{p-1}}\, \int_{B_{\rho_0}} |\nabla f_\varepsilon|^{p'}\, dx+\rho_0^{\frac{2}{p}}\, \left(\int_{B_{\rho_0}} |f_\varepsilon|^{2\,p'}\, dx\right)^\frac{1}{p'}.
\end{split}
\end{equation}
\end{prop}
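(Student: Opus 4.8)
The plan is to prove \eqref{componentig} by a Moser iteration on equation \eqref{derivatag} for the partial derivatives of $u_\varepsilon$, fed by the Caccioppoli-type inequalities of Section \ref{sec:caccioppoli}. I would keep $j$ fixed and iterate the quantities $W_j^{\beta}$, $\beta\ge p/4$ (equivalently $s=2\beta-p/2\ge 0$ in Lemma \ref{lm:caccioweight}), simultaneously for all indices. The base step is the uniform Sobolev estimate of Corollary \ref{coro:sobolev_coro} (see Remark \ref{oss:sobolev}): together with \eqref{uniformeg} it bounds $\|\nabla W_j^{p/4}\|_{L^2(B_{R_0})}$ by (part of) $\mathcal J(u_\varepsilon,f_\varepsilon;R_0,\rho_0)$, and in dimension $N=2$ the embedding $W^{1,2}\hookrightarrow L^q$ (every finite $q$) gives a starting integrability $W_i\in L^{q_0}(B_{R_0})$, say with $q_0=p$, quantitatively controlled.

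For the induction step, assume $W_i\in L^{q_k}$ on $B_{\rho_k}$ with $R_0\ge\rho_k>\rho_{k+1}\ge r_0$ and let $\beta=\beta_k$ be a free parameter. The \emph{diagonal} derivative $\partial_{x_j}W_j^{\beta}$ is handled directly by \eqref{basegg}: estimating its right-hand side by H\"older against the inductive bound $W_i\in L^{q_k}$ (valid as long as $2\beta-p/2+1\lesssim q_k$) and against $f_\varepsilon\in L^{2p'}$ yields an $L^2(B_{\rho_{k+1}})$-bound on $\partial_{x_j}W_j^{\beta}$. For an \emph{off-diagonal} derivative $\partial_{x_i}W_j^{\beta}$, $i\ne j$, I would use the identity $\partial_{x_i}W_j^{\beta}=\frac{4\beta}{p}\,W_j^{\beta-p/4}\,\partial_{x_i}W_j^{p/4}$: since $\partial_{x_i}W_j^{p/4}\in L^2$ \emph{uniformly in $\varepsilon$} by the base step, H\"older's inequality with $W_j^{\beta-p/4}\in L^{2q_k/(2\beta-p/2)}$ gives an $L^{r}(B_{\rho_{k+1}})$-bound, where $r=r_k<2$ is fixed by $\frac1r=\frac12+\frac{2\beta-p/2}{2q_k}$. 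Thus $\nabla W_j^{\beta}$ lies in $L^2$ in the direction $x_j$ but only in $L^{r}$, $r<2$, in the orthogonal direction, and this is exactly where Troisi's anisotropic Sobolev inequality \cite{Tr} enters: in two dimensions, with exponents $(2,r)$, it produces $W_j^{\beta}\in L^{4r/(2-r)}$, i.e. $W_i\in L^{q_{k+1}}$. A short computation shows one can take $\beta_k\sim q_k$ and $r_k$ bounded away from $2/3$ and $2$ so that $q_{k+1}\ge 2\,q_k$, hence geometric growth of the exponents.

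What remains is bookkeeping: with $\rho_k=r_0+2^{-k}(R_0-r_0)$ the constant at step $k$ is controlled by a power of $q_k\sim 2^k$, by $(\rho_k-\rho_{k+1})^{-2}$, and by a fixed power of $\mathcal J(u_\varepsilon,f_\varepsilon;R_0,\rho_0)$; raising to the power $1/q_k$ and summing the (convergent, since $q_k$ grows geometrically) series bounds $\|W_i\|_{L^\infty(B_{r_0})}$ by the right-hand side of \eqref{componentig}, and then $|(u_\varepsilon)_{x_i}|\le\delta+\sqrt{(|(u_\varepsilon)_{x_i}|-\delta)_+^2}\le\delta+\sqrt{W_i}$ gives the claim. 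I expect the main obstacle to be the off-diagonal term: the weight $g_{i,\varepsilon}''((u_\varepsilon)_{x_i})$ in the natural Caccioppoli inequality \eqref{baseg} degenerates on $\{|(u_\varepsilon)_{x_i}|<\delta_i\}$, so no $L^2$-gradient estimate is available there; the rewriting trick recovers only an $L^r$ estimate with $r<2$, and feeding a single ``bad'' direction into Troisi's inequality is still harmless for $N=2$ but would accumulate fatally for $N\ge 3$ (cf. Remark \ref{oss:2d?}). Obtaining the precise powers of $R_0/(R_0-r_0)$ and of $\mathcal J$ displayed in \eqref{componentig} is then a matter of careful accounting in this last step.
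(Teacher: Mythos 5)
Your blind proposal follows the same overall strategy as the paper: Moser iteration on the Caccioppoli inequality \eqref{basegg} for the diagonal derivative, the rewriting $\partial_{x_i}W_j^{\beta}=\tfrac{4\beta}{p}W_j^{\beta-p/4}\,\partial_{x_i}W_j^{p/4}$ plus H\"older for the off-diagonal one (pivoting on the fixed $L^2$ bound of $\nabla W_j^{p/4}$ from Corollary~\ref{coro:sobolev_coro}), Troisi's anisotropic Sobolev inequality to close the reverse--H\"older estimate, and the elementary bound $|(u_\varepsilon)_{x_i}|\le\delta+\sqrt{W_i}$ to conclude. This is exactly the architecture of the paper's proof.

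There is, however, a concrete miscalibration in the exponent bookkeeping that would surface as soon as one actually ran the ``short computation''. With your choice $\tfrac1{r_k}=\tfrac12+\tfrac{2\beta_k-p/2}{2q_k}$, the constraint for Troisi's inequality is $r_k>1$, \emph{not} $r_k>2/3$: in the proof of \eqref{troisi} the auxiliary exponent $\beta=\tfrac{3q-2}{2-q}$ must be $>1$, which forces $q>1$. (The formula $\mathcal{T}_q=\tfrac{(2-q)^2}{4q^2-(2-q)^2}$ happens to be positive for $q>2/3$, but the inequality is only proved for $q>1$.) Imposing $r_k>1$ is equivalent to $\beta_k<q_k/2+p/4$, so ``$\beta_k\sim q_k$'' with proportionality constant close to $1$ is too aggressive --- it drives $r_k$ toward $2/3$ and kills the anisotropic Sobolev step. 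The paper sidesteps this delicate balancing altogether by \emph{fixing} the off-diagonal exponent $q=\tfrac{2p}{p+1}\in(1,2)$ throughout the iteration and iterating instead on $\vartheta=ps$, which is equivalent to choosing $\beta_k\sim q_k/(2p)$ (proportionality constant well below $1/2$); with this fixed $q$ the identity $q/(2-q)=p$ makes the right-hand side collapse to a single power $\int W_1^{sp}$ and the left-hand side dominate $\int W_1^{2sp}$, yielding clean doubling. A second, more cosmetic, issue: the base integrability should be taken as $W_i\in L^{p/2}$ (coming from the energy estimate \eqref{uniformeg}), rather than ``$q_0=p$'' via the embedding $W^{1,2}\hookrightarrow L^q$ in dimension $2$, whose constant blows up as $q\to\infty$ and therefore cannot be used to improve the starting point for free.
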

\begin{proof}
For notational simplicity, we write again $u$ in place of $u_\varepsilon$. We still use the notation
\[
W_j=\delta^2+\left(|u_{x_j}|-\delta\right)^2_+,\qquad j=1,2.
\]
We give the proof for $u_{x_1}$, the one for $u_{x_2}$ being exactly the same. By \eqref{basegg} we already know that
\[
\begin{split}
\int \left|\left(W_1^{\frac{p}{4}+\frac{s}{2}}\right)_{x_1}\right|^2\, \eta^2\, dx
&\le C_6\,\sum_{i=1}^2\int W_i^\frac{p-2}{2}\, W_1^{s+1}\,|\nabla\eta|^2\,dx+C_6\,(s+1)^2 \int |f_\varepsilon|^2\, W_1^{s}\, \eta^2\, dx,
\end{split}
\]
where $\eta$ is any Lipschitz function supported on $B$ and such that $0\le \eta \le 1$. We add the term
\[
\int \left|\eta_{x_1}\right|^2\, W_1^{\frac{p}{2}+s}\, dx,
\]
on both sides of the previous inequality
and observe that
\[
\int \left|\left(W_1^{\frac{p}{4}+\frac{s}{2}}\right)_{x_1}\right|^2\, \eta^2\, dx+\int  W_1^{\frac{p}{2}+s}\,|\eta_{x_1}|^2\, dx\ge \frac{1}{2}\,\int \left|\left(W_1^{\frac{p}{4}+\frac{s}{2}}\,\eta\right)_{x_1}\right|^2\, dx.
\]
We thus obtain
\begin{equation}
\label{x1}
\begin{split}
\int \left|\left(W_1^{\frac{p}{4}+\frac{s}{2}}\,\eta\right)_{x_1}\right|^2\, dx&\le C\,\sum_{i=1}^2\int W_i^\frac{p-2}{2}\, W_1^{s+1}\,|\nabla\eta|^2\,dx+C\,(s+1)^2\, \int |f_\varepsilon|^2\, W_1^{s}\, \eta^2\, dx,
\end{split}
\end{equation}
with $C=C(p,\delta)>0$.
\par
The main problem of the Caccioppoli inequality \eqref{baseg} is that apparently we can not use it to control the {\it missing term}
\[
\left(W_1^{\frac{p}{4}+\frac{s}{2}}\right)_{x_2}.
\]
Thus there is an obstruction to derive estimates for $\nabla W_1^{\frac{p}{4}+\frac{s}{2}}$ which could lead to an interative scheme of reverse H\"older inequalities. In order to overcome this problem, we observe that 
\[
\left|\left(W_1^{\frac{p}{4}+\frac{s}{2}}\right)_{x_2}\right|=\frac{p+2\,s}{p}\,\left|\left( W_1^{\frac{p}{4}}\right)_{x_2}\right|\, W_1^\frac{s}{2}.
\]
Then if we fix $1<q<2$, by H\"older inequality with exponents $2/q$ and $2/(2-q)$,
we have
\[
\begin{split}
\left(\int \left|\left(W_1^{\frac{p}{4}+\frac{s}{2}}\right)_{x_2}\right|^q\, \eta^q\, dx\right)^\frac{2}{q}&\le\left(\frac{p+2\,s}{p}\right)^2\left(\int \left|\left(W_1^{\frac{p}{4}}\right)_{x_2}\right|^2\eta^2\, dx\right) \left(\int_{\mathrm{spt}(\eta)} W_1^{\frac{q}{2-q}\,s}\, dx\right)^\frac{2-q}{q}\!\!.\\
\end{split}
\]
The precise value of $q$ will be specified later.  
We now add the term
\[
\left(\int W_1^{\frac{p\,q}{4}+\frac{s\,q}{2}}\, |\eta_{x_2}|^q\, dx\right)^\frac{2}{q},
\]
on both sides of the previous inequality and observe that by triangle inequality
\[
\left(\int \left|\left(W_1^{\frac{p}{4}+\frac{s}{2}}\right)_{x_2}\right|^q\, \eta^q\, dx\right)^\frac{2}{q}+\left(\int W_1^{\frac{p\,q}{4}+\frac{s\,q}{2}}\, |\eta_{x_2}|^q\, dx\right)^\frac{2}{q}\ge \frac{1}{2}\, \left(\int \left|\left(W_1^{\frac{p}{4}+\frac{s}{2}}\,\eta\right)_{x_2}\right|^q\, dx\right)^\frac{2}{q}.
\]
Thus we get
\begin{equation}
\label{x2}
\begin{split}
\left(\int \left|\left(\left(W_1^{\frac{p}{4}+\frac{s}{2}}\right)_{x_2}\eta\right)\right|^q\, dx\right)^\frac{2}{q}&\le C\,(1+s)^2\,\left(\int \left|\left(W_1^{\frac{p}{4}}\right)_{x_2}\right|^2\eta^2\, dx\right)\,\left(\int_{\mathrm{spt}(\eta)} W_1^{\frac{q}{2-q}\,s}\, dx\right)^\frac{2-q}{q}\\
&+C\,\left(\int W_1^{\frac{p\,q}{4}+\frac{s\,q}{2}}\, |\eta_{x_2}|^q\, dx\right)^\frac{2}{q},
\end{split}
\end{equation}
with $C=C(p)>0$. For $0<r<R<R_0$, we now take $\eta\in W^{1,\infty}_0(B_R)$ to be the standard cut-off function
\[
\eta(x)=\min\left\{1,\frac{(R-|x|)_+}{R-r}\right\},
\]
then by multiplying \eqref{x1} and \eqref{x2} we get
\begin{equation}
\label{bordel!}
\begin{split}
\left(\int \left|\left(W_1^{\frac{p}{4}+\frac{s}{2}}\,\eta\right)_{x_1}\right|^2\, dx\right)&\left(\int \left|\left(W_1^{\frac{p}{4}+\frac{s}{2}}\,\eta\right)_{x_2}\right|^q\, dx\right)^\frac{2}{q}\\
&\le C\,\left[\frac{1}{(R-r)^2}\,\sum_{i=1}^2 \int_{B_R} W_i^\frac{p-2}{2}\, W_1^{s+1}\, dx+(s+1)^2\, \int_{B_R} |f_\varepsilon|^2\, W_1^{s}\, dx\right]\\
&\times \left[(s+1)^2\,\left(\int_{B_R} \left|\left(W_1^{\frac{p}{4}}\right)_{x_2}\right|^2\, dx\right)\,\left(\int_{B_R} W_1^{\frac{q}{2-q}\,s}\, dx\right)^\frac{2-q}{q}\right.\\
&\left.+\frac{1}{(R-r)^2}\,\left(\int_{B_R} W_1^{\frac{p\,q}{4}+\frac{s\,q}{2}}\, dx\right)^\frac{2}{q}\right].
\end{split}
\end{equation}
We now estimate the terms appearing in \eqref{bordel!}. To this aim, it will be useful to introduce the quantity
\begin{equation}
\label{mathcalI}
\begin{split}
\mathcal{I}(W_1,W_2,f_\varepsilon;R_0)&=\sum_{i=1}^2 \left[\fint_{B_{R_0}} W_i^\frac{p}{2}\, dx +\int_{B_{R_0}} \left|\nabla W_i^\frac{p}{4}\right|^2\, dx\right]\\
&+R_0^{\frac{2}{p}}\, \left(\int_{B_{R_0}} |f_\varepsilon|^{2\,p'}\, dx\right)^\frac{1}{p'}.
\end{split}
\end{equation}
Then we start with the first term on the right-hand side of \eqref{bordel!}. Observe that 
\[
\sum_{i=1}^2 \int_{B_R} W_i^\frac{p-2}{2}\, W_1^{s+1}\, dx=\int_{B_R} W_1^\frac{p}{2}\, W_1^{s}\, dx+\int_{B_R} W_2^\frac{p-2}{2}\, W_1\,W_1^{s}\, dx.
\]
We use H\"older inequality 
in conjunction with Sobolev-Poincar\'e inequality\footnote{Since we are in dimension $N=2$, we have $W^{1,2}(B_{R_0})\hookrightarrow L^{2\,p'}(B_{R_0})$. Then we have
\[
\left(\int_{B_R} \left(W^\frac{p}{4}\right)^{2\,p'}\, dx\right)^\frac{1}{p'}\, dx\le C\,R^{\frac{2}{p'}}\,\left[\fint_{B_R} \left(W^\frac{p}{4}\right)^2\, dx +\int_{B_R} \left|\nabla W^\frac{p}{4}\right|^2\, dx\right],
\]
with a constant $C=C(p)>0$.}, to get
\begin{equation}
\label{firstW1}
\begin{split}
\int_{B_R} W_1^\frac{p}{2}\,W_1^{s}\, dx&\le C\,\left[\fint_{B_{R_0}} W_1^\frac{p}{2}\, dx +\int_{B_{R_0}} \left|\nabla W_1^\frac{p}{4}\right|^2\, dx\right]\,R_0^{\frac{2}{p'}} \left(\int_{B_R} W_1^{s\,p}\, dx\right)^\frac{1}{p}\\
&\le C\, \mathcal{I}(W_1,W_2,f_\varepsilon;R_0)\, R_0^{\frac{2}{p'}} \left(\int_{B_R} W_1^{s\,p}\, dx\right)^\frac{1}{p},
\end{split}
\end{equation}
and
\begin{equation}
\label{secondW1}
\begin{split}
\int_{B_R} W_2^\frac{p-2}{2}\,W_1\,W_1^{s}\, dx&\le C\,\left[\sum_{i=1}^2\left(\int_{B_{R_0}} \left(W_i^\frac{p}{4}\right)^{2\,p'}\, dx\right)^\frac{1}{p'}\right]\,\left(\int_{B_R} W_1^{s\,p}\, dx\right)^\frac{1}{p}\\
&\le C\,\left\{\sum_{i=1}^2\,\left[\fint_{B_{R_0}} W_i^\frac{p}{2}\, dx +\int_{B_{R_0}} \left|\nabla W^\frac{p}{4}_i\right|^2\, dx\right]\right\}\,R_0^{\frac{2}{p'}} \left(\int_{B_R} W_1^{s\,p}\, dx\right)^\frac{1}{p}\\
&\le C\, \mathcal{I}(W_1,W_2,f_\varepsilon;R_0)\, R_0^{\frac{2}{p'}} \left(\int_{B_R} W_1^{s\,p}\, dx\right)^\frac{1}{p},
\end{split}
\end{equation}
for some constant $C=C(p)>0$ depening only on $p$.
\vskip.2cm\noindent
The term containing $f_\varepsilon$ in \eqref{bordel!} is estimated as follows. Observe that\footnote{The exponent $2\,p'$ is well-defined even in the case $p=2$.}
\[
W^{1,p'}(B_{R_0})\hookrightarrow L^{2\,p'}(B_{R_0}),\qquad \mbox{ since } 2\,p'<(p')^*,
\] 
then by H\"older's inequality and the definition of $\mathcal{I}(W_1,W_2,f_\varepsilon;R_0)$
\begin{equation}
\label{assorbire}
\begin{split}
\int_{B_R} |f_\varepsilon|^2\, W_1^{s}\,dx&\le \left(\int_{B_{R_0}} |f_\varepsilon|^{2\,p'}\, dx\right)^\frac{1}{p'}\,\left(\int_{B_{R}} W_1^{s\,p}\, dx\right)^\frac{1}{p}\\
&\le \mathcal{I}(W_1,W_2,f_\varepsilon;R_0)\, R_0^{-\frac{2}{p}}\,\left(\int_{B_{R}} W_1^{s\,p}\, dx\right)^\frac{1}{p}
\end{split}
\end{equation}
For the last term on the right-hand side of \eqref{bordel!}, by H\"older inequality and estimate \eqref{firstW1} we have
\begin{equation}
\label{pezzettino}
\begin{split}
\left(\int_{B_R} W_1^{\frac{p\,q}{4}+\frac{s\,q}{2}}\, dx\right)^\frac{2}{q}&\le C\,R_0^{2\,\left(\frac{2}{q}-1\right)}\int_{B_R} W_1^\frac{p}{2}\,W_1^s\, dx\\
&\le C\,R_0^{2\,\left(\frac{2}{q}-\frac{1}{p}\right)}\,\mathcal{I}(W_1,W_2,f_\varepsilon;R_0)\, \left(\int_{B_R} W_1^{s\,p}\, dx\right)^\frac{1}{p},
\end{split}
\end{equation}
where $C=C(q)>0$.
\vskip.2cm\noindent
Finally, for the left-hand side of \eqref{bordel!}, we have
\begin{equation}
\label{gauche}
\begin{split}
\left(\int \left|\left(W_1^{\frac{p}{4}+\frac{s}{2}}\,\eta\right)_{x_1}\right|^2\, dx\right)& \left(\int \left|\left(W_1^{\frac{p}{4}+\frac{s}{2}}\,\eta\right)_{x_2}\right|^q\, dx\right)^\frac{2}{q}\ge \mathcal{T}^2_{q}\, \left(\int \left(W_1^{\frac{p}{4}+\frac{s}{2}}\,\eta\right)^{\overline q^*}\,dx\right)^\frac{4}{\overline q^*}.
\end{split}
\end{equation}
Here we used the {\it anisotropic Sobolev-Troisi inequality} (see Appendix \ref{sec:B}) for the compactly supported function $W_1^{(p+2\,s)/4}\,\eta$. The exponent $\overline q^*$ is defined by
\[
\overline q^*=\frac{2\,\overline q}{2-\overline q},\qquad \mbox{ where }\quad \frac{1}{\overline q}=\frac{1}{2}\, \left(\frac{1}{2}+\frac{1}{q}\right),
\] 
so that
\[
\overline{q}=\frac{4\,q}{2+q}\qquad \mbox{ and }\qquad \overline q^*=\frac{4\,q}{2-q},
\]
the constant $\mathcal{T}_q$ only depends on $q$ and it converges to $0$ as $q$ goes to $2$.
\vskip.2cm\noindent
By using \eqref{firstW1}, \eqref{secondW1}, \eqref{assorbire}, \eqref{pezzettino} and \eqref{gauche} in \eqref{bordel!}, we then arrive at
\begin{equation}
\label{reverseg}
\begin{split}
\left(\int_{B_r} \left(W_1^{\frac{p}{2}+s}\right)^\frac{2\,q}{2-q}\, dx\right)^\frac{2-q}{q}&\le C\,\left[\left(\frac{R_0}{R-r}\right)^2\,\mathcal{I}(W_1,W_2,f_\varepsilon;R_0)\,R_0^{-\frac{2}{p}} \left(\int_{B_R} W_1^{s\,p}\, dx\right)^\frac{1}{p}\right.\\
&\left.+(s+1)^2\,\mathcal{I}(W_1,W_2,f_\varepsilon;R_0)\,R_0^{-\frac{2}{p}}\,\left(\int_{B_{R}} W_1^{s\,p}\, dx\right)^\frac{1}{p}\right]\\
&\times \left[(s+1)^2\,\mathcal{I}(W_1,W_2,f_\varepsilon;R_0)\left(\int_{B_R} W_1^{\frac{q}{2-q}\,s}\, dx\right)^\frac{2-q}{q}\right.\\
&\left.+\left(\frac{R_0}{R-r}\right)^2\,R_0^{2\,\left(\frac{2}{q}-\frac{1}{p}-1\right)}\,\mathcal{I}(W_1,W_2,f_\varepsilon;R_0)\, \left(\int_{B_R} W_1^{s\,p}\, dx\right)^\frac{1}{p}\right],
\end{split}
\end{equation}
for a constant $C=C(p,q,\delta)>0$. We now choose $1<q<2$ as follows
\begin{equation}
\label{choose!}
q= \frac{2\,p}{p+1}.
\end{equation}
Observe that with such a choice, we have
\[
\frac{q}{2-q}=p\qquad \mbox{ and }\qquad \frac{2}{q}-\frac{1}{p}-1=0.
\]
We further observe that
\[
\left(W_1^{\frac{p}{2}+s}\right)^{2\,p}\ge W_1^{2\,s\,p}, 
\]
since $W_i\ge 1$. Then \eqref{reverseg} becomes
\[
\begin{split}
\left(\int_{B_r} W_1^{2\,p\,s}\, dx\right)^\frac{1}{p}&\le C\,\mathcal{I}(W_1,W_2,f_\varepsilon;R_0)^2\left[\left(\frac{R_0}{R-r}\right)^2+(s+1)^2\,\right]^2\,R_0^{-\frac{2}{p}}\,\left(\int_{B_R} W_1^{s\,p}\, dx\right)^\frac{2}{p}.
\end{split}
\]
By using that $R_0/(R-r)\ge 1$ and $(s+1)\ge 1$ and introducing the notation $\vartheta=p\,s$, then the previous estimate finally gives
\begin{equation}
\label{reverseg3}
\begin{split}
\|W_1\|_{L^{2\,\vartheta}(B_r)}&\le \left[C\,\mathcal{I}(W_1,W_2,f_\varepsilon;R_0)\,\left(\frac{R_0}{R-r}\right)^2\, \left(\frac{\vartheta}{p}+1\right)^2\,\right]^\frac{p}{\vartheta}\,R_0^{-\frac{1}{\vartheta}}\,\|W_1\|_{L^{\vartheta}(B_R)},
\end{split}
\end{equation}
possibly for a different constant $C=C(p,\delta)>0$. {\it This is the iterative scheme of reverse H\"older inequalities needed to launch a Moser's iteration}.
\par
We then fix the two radii $R_0>r_0>0$ of the statement and consider the sequences
\[
r_k=r_0+\frac{R_0-r_0}{2^{k}}\qquad \mbox{ and }\qquad \vartheta_{k}=2\, \vartheta_{k-1}=2^{k}\, \vartheta_0=2^{k-1}\,p.
\]
Then iterating \eqref{reverseg3} infinitely many times with $R=r_k$ and $r=r_{k+1}$, we get
\[
\|W_1\|_{L^\infty(B_{r_0})}\le C\, \left(\frac{R_0}{R_0-r_0}\right)^8\,\mathcal{I}(W_1,W_2,f_\varepsilon;R_0)^4\, \left(\fint_{B_{R_0}} W_1^\frac{p}{2}\, dx\right)^\frac{2}{p},
\] 
for some constant $C=C(p,\delta)>0$.
We notice that $u_{x_1}^2\le W_1\le u_{x_1}^2+\delta^2$, by definition of $W_1$. Then we obtain with simple manipulations
\[
\|u_{x_1}\|_{L^\infty(B_{r_0})}\le C\,\left(\frac{R_0}{R_0-r_0}\right)^4\,\mathcal{I}(W_1,W_2,f_\varepsilon;R_0)^2\,\left[\left(\fint_{B_{R_0}} \left|u_{x_1}\right|^p\, dx\right)^\frac{1}{p}+\delta\right],
\]
for a possibly different constant $C=C(p,\delta)>0$.
Finally, by Corollary \ref{coro:sobolev_coro} the term $\mathcal{I}(W_1,W_2,f_\varepsilon;R_0)$ defined in \eqref{mathcalI} can be estimated as follows
\[
\begin{split}
\mathcal{I}(W_1,W_2,f_\varepsilon;R_0)&\le C\,\left(\frac{\rho_0}{\rho_0-R_0}\right)^2\,\left[\sum_{j=1}^2\fint_{B_{\rho_0}} |u_{x_j}|^p\,dx+\delta^p\right]\\
&+C\,\rho_0^{\frac{2}{p-1}}\, \int_{B_{\rho_0}} |\nabla f_\varepsilon|^{p'}\, dx+\rho_0^{\frac{2}{p}}\, \left(\int_{B_{\rho_0}} |f_\varepsilon|^{2\,p'}\, dx\right)^\frac{1}{p'}.
\end{split}
\]
This concludes the proof.
\end{proof}
\begin{oss}
\label{oss:2d?}
Observe that the previous strategy does not seem to work for $N\ge 3$. Indeed, in this case we would have $N-1$ missing terms, i.e. 
\[
\partial_{x_i} W_1^{\frac{p}{4}+\frac{s}{2}},\qquad i=2,\dots,N.
\]
By proceeding as before for each of these terms, i.e. combining \eqref{sobog} and H\"older inequality, one would have on the left-hand side the term
\[
\left(\int \left|\left(W_1^{\frac{p}{4}+\frac{s}{2}}\,\eta\right)_{x_1}\right|^2\, dx\right)\, \prod_{i=2}^N  \left(\int \left|\left(W_1^{\frac{p}{4}+\frac{s}{2}}\,\eta\right)_{x_i}\right|^q\, dx\right)^\frac{2}{q},
\]
which in turn can be estimated from below by Sobolev-Troisi inequality by
\[
\left(\int_{B_r} \left(W_1^{\frac{p}{4}+\frac{s}{2}}\right)^{\overline q^*}\,dx\right)^\frac{2}{\overline q^*}.
\]
The right-hand side would still contain the term
\[
\left(\int_{B_R} W_1^{\frac{q}{2-q}\,s}\, dx\right)^\frac{2-q}{q}.
\]
The exponent $\overline q^*$ is now defined by
\[
\overline q^*=\frac{N\,\overline q}{N-\overline q},\qquad \mbox{ where }\quad \frac{1}{\overline q}=\frac{1}{N}\, \left(\frac{1}{2}+\frac{N-1}{q}\right),
\] 
so that
\[ 
\overline{q}=\frac{2\,N\,q}{2\,N+q-2}\qquad \mbox{ and }\qquad \overline q^*=\frac{2\,N\,q}{2\,N-q-2}.
\]
Then Moser's iteration would work if
\[
\frac{\overline q^*}{2}\,s> \frac{q}{2-q}\,s\qquad\Longleftrightarrow\qquad q<\frac{2}{N-1}. 
\]
Of course, when $N\ge 3$ the last condition does not fit with the requirement $q>1$.
\end{oss}

\section{Proof of Theorem B}
\label{sec:d}

\begin{proof}
The proof is the same as that of Theorem A. The essential point is the uniform Lipschitz estimate of Proposition \ref{lm:berrrnstein} below, which replaces that of Proposition \ref{lm:2d}.
\end{proof}

\begin{prop}[Uniform Lipschitz estimate, $p\ge 4$]
\label{lm:berrrnstein}
Let $N\ge 3$ and $p\ge 4$. 
For every pair of concentric balls $B_{r_0}\Subset B_{R_0}\Subset B$, we have
\begin{equation}
\label{componentig2}
\|\nabla u_\varepsilon\|_{L^\infty(B_{r_0})}\le C_8,
\end{equation}
where $C_8=C_8(N,p,\,\delta,\,R_0-r_0,\,M,\,\|f\|_{W^{1,\infty}(2\,B)})>0$ does not depend on $\varepsilon$. Here the constant $M$ is the same appearing in \eqref{max}.
\end{prop}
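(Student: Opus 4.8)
The plan is to establish the a priori bound \eqref{componentig2} via a Bernstein-type argument applied to the regularized equation \eqref{DGNM}, which by Lemma \ref{lm:approximation} is a $C^2$ (resp.\ $C^3$) equation with ellipticity constant at least $\varepsilon$. Following the strategy announced in the introduction, I would work with the auxiliary function
\[
v=|\nabla u_\varepsilon|^2+\lambda\, u_\varepsilon^2,
\]
for a parameter $\lambda=\lambda(N,p,\delta,M,\|f\|_{W^{1,\infty}})$ to be chosen large. First I would differentiate \eqref{DGNM}, which in coordinates reads $\sum_i (g_{i,\varepsilon}'(u_{x_i}))_{x_i}=f_\varepsilon$, with respect to $x_k$, multiply by $u_{x_k}$ and sum over $k$; combining this with the Laplacian-type term coming from $\lambda\,u_\varepsilon^2$ and using the equation once more to rewrite $\Delta$-type quantities, one obtains a differential inequality of the form
\[
\sum_{i=1}^N g_{i,\varepsilon}''(u_{x_i})\, v_{x_ix_i}+\langle \text{lower order}, \nabla v\rangle \ge \sum_{i,k} g_{i,\varepsilon}''(u_{x_i})\, u_{x_ix_k}^2+\lambda\,\sum_i g_{i,\varepsilon}''(u_{x_i})\, u_{x_i}^2-(\text{error terms involving }f_\varepsilon,\nabla f_\varepsilon).
\]
The crucial point, and where $p\ge 4$ enters, is that the ``bad'' third-order terms $\sum_{i,k} g_{i,\varepsilon}'''(u_{x_i})\, u_{x_ix_i}\, u_{x_ix_k}^2$ produced by differentiating the nonlinearity must be absorbed by the positive Hessian term $\sum_{i,k} g_{i,\varepsilon}''(u_{x_i})\,u_{x_ix_k}^2$: since $|g_{i,\varepsilon}'''(t)|\le C\,(|t|-\delta_i)_+^{p-3}$ and $g_{i,\varepsilon}''(t)\ge c\,(|t|-\delta_i)_+^{p-2}$, a Cauchy--Schwarz split with weight $(|t|-\delta_i)_+^{(p-4)/2}$ is exactly what closes, and this is integrable/controllable precisely when $p\ge 4$.

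Next I would run the maximum principle. If $\bar x$ is an interior maximum point of $\zeta^2 v$ on $B_{R_0}$ for a standard cut-off $\zeta$ supported in $B_{R_0}$ and equal to $1$ on $B_{r_0}$, then at $\bar x$ one has $\nabla(\zeta^2 v)=0$ and the matrix $D^2(\zeta^2 v)\le 0$ tested against the diagonal elliptic coefficients is $\le 0$. Feeding this into the differential inequality above, expressing $\nabla v$ in terms of $\nabla\zeta$ and $v$, and using $g_{i,\varepsilon}''(u_{x_i})\le C\,W_i^{(p-2)/2}\le C\,(1+v)^{(p-2)/2}$ together with $\|u_\varepsilon\|_{L^\infty}\le M$ from \eqref{max}, one arrives at a pointwise inequality of the type
\[
\lambda\,\sum_i g_{i,\varepsilon}''(u_{x_i})\, u_{x_i}^2\ \le\ C\,\Big(\frac{1}{(R_0-r_0)^2}+\|\nabla f_\varepsilon\|_{L^\infty}+\|f_\varepsilon\|_{L^\infty}^2+\lambda\Big)\,(1+v(\bar x))^{p/2},
\]
after absorbing the $f$-error terms via Young's inequality. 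The subtlety is that the left-hand side only controls $u_{x_i}^2$ in the directions where $|u_{x_i}|$ is large, so one uses that either all $|u_{x_i}(\bar x)|\le 2\delta$ — in which case $v(\bar x)$ is already bounded — or the largest one dominates $|\nabla u_\varepsilon(\bar x)|$ up to a dimensional constant, and for that index $g_{i,\varepsilon}''(u_{x_i})\,u_{x_i}^2\ge c\,(|u_{x_i}|-\delta)_+^{p}\gtrsim v(\bar x)^{p/2}-C$. Choosing $\lambda$ large enough relative to $C$ gives $v(\bar x)\le C_8$, hence $\zeta^2 v\le C_8$ everywhere and in particular $\|\nabla u_\varepsilon\|_{L^\infty(B_{r_0})}^2\le C_8$, with $C_8$ depending only on the stated quantities and not on $\varepsilon$.

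There are two technical caveats I would address. First, $F_\varepsilon$ is only $C^{k}$ with $k=2$ when $2<p\le 3$ and $k=3$ when $p>3$ (Lemma \ref{lm:approximation}); since here $p\ge 4$ we have $F_\varepsilon\in C^{3}_{loc}$ and $u_\varepsilon\in C^3_{loc}$, so the pointwise differentiation of the equation and the use of $D^2(\zeta^2 v)(\bar x)\le 0$ are legitimate, though $g_{i,\varepsilon}'''$ must be interpreted with care near $|t|=\delta_i$ where it has a jump; as in Lemma \ref{lm:caccioweight} this is handled by a preliminary smoothing of $g_{i,\varepsilon}$ and passing to the limit, or simply by noting the relevant one-sided bounds hold a.e.\ and the bad set $\{|u_{x_i}|=\delta_i\}$ where $u_{x_ix_i}\ne0$ has measure zero so the maximum-point argument can be localized away from it. Second, one should phrase the argument so that the maximum of $\zeta^2 v$ is attained (it is, by continuity and compact support) and so that the dependence on $\|f\|_{W^{1,\infty}(2B)}$ enters only through $\|f_\varepsilon\|_{W^{1,\infty}(B)}\le\|f\|_{W^{1,\infty}(2B)}$. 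I expect the main obstacle to be the bookkeeping in absorbing the third-order nonlinear error terms — getting the weighted Cauchy--Schwarz split to close uniformly in $\varepsilon$ and verifying that $p\ge 4$ (rather than merely $p>3$) is what is actually required, matching the remark about \cite{UU}.
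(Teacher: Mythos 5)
Your proposal is essentially the same as the paper's proof: you use the Bernstein method with the auxiliary function $|\nabla u_\varepsilon|^2+\lambda\,u_\varepsilon^2$, evaluate a linearized elliptic operator at an interior maximum of the cut-off quantity, and absorb the dangerous $g'''_{i,\varepsilon}$ contributions by a Cauchy--Schwarz split with weight $(|u_{x_i}|-\delta_i)_+^{(p-4)/2}$, which is exactly where $p\ge 4$ enters.

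One bookkeeping correction is needed before the absorption can actually close. The third-order error term is not $\sum_{i,k}g'''_{i,\varepsilon}(u_{x_i})\,u_{x_ix_i}\,u_{x_ix_k}^2$ as you wrote --- a term of that form has two Hessian factors against one, and Cauchy--Schwarz with the $(|t|-\delta_i)_+^{(p-4)/2}$ weight would leave an uncontrolled $|u_{x_ix_i}|$ behind. If you package the first-order coefficients into the operator
\[
L[\psi]=\sum_{i=1}^N\Bigl(g'''_{i,\varepsilon}(u_{x_i})\,u_{x_ix_i}\,\psi_{x_i}+g''_{i,\varepsilon}(u_{x_i})\,\psi_{x_ix_i}\Bigr),
\]
then $L[u_{x_j}]=(f_\varepsilon)_{x_j}$, and the product rule for $L$ shows that $L[|\nabla u|^2]=2\sum_j u_{x_j}(f_\varepsilon)_{x_j}+2\sum_{i,j}g''_{i,\varepsilon}(u_{x_i})u_{x_ix_j}^2$ contains no $g'''$ at all. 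The $g'''$ errors that genuinely survive come from $L$ hitting the cutoff $\zeta$ and the term $\lambda u^2$, namely
\[
\sum_{i=1}^N g'''_{i,\varepsilon}(u_{x_i})\,u_{x_ix_i}\Bigl(|\nabla u|^2\,\zeta_{x_i}+2\lambda\,u\,u_{x_i}\Bigr),
\]
which carry only one Hessian factor apiece; it is this expression to which your Cauchy--Schwarz split applies, and then everything closes as you describe. Also, since $p\ge4$, $g_{i,\varepsilon}'''$ is in fact continuous across $|t|=\delta_i$, so the smoothing caveat you raise is unnecessary in this range.
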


\begin{proof}
As usual, for notational simplicity we simply write $u$ in place of $u_\varepsilon$.
By Lemma \ref{lm:approximation}, we get that $u$ is indeed a local $C^3$ solution of the equation \eqref{regolareg} in $B$, i.e. it verifies
\begin{equation}
\label{UU}
\sum_{i=1}^N \left(g'_{i,\varepsilon}(u_{x_i})\right)_{x_i}=f_\varepsilon,\qquad \mbox{ in } B',
\end{equation}
for every $B'\Subset B$.
This means that pointwise we have
\begin{equation}
\label{pointwise}
\sum_{i=1}^N g_{i,\varepsilon}''(u_{x_i})\, u_{x_i\,x_i}=f_\varepsilon\, \qquad \mbox{ in } B'.
\end{equation}
We now derive the previous equation with respect to $x_j$ and obtain
\begin{equation}
\label{derivala}
\sum_{i=1}^N \Big[g_{i,\varepsilon}'''(u_{x_i})\, u_{x_i\,x_i}\,u_{x_i\,x_j}+g''_{i,\varepsilon}(u_{x_i})\, u_{x_i\,x_i\,x_j}\Big]=(f_\varepsilon)_{x_j},\qquad \mbox{ in } B'.
\end{equation}
We introduce the following linear differential operator
\begin{equation}
\label{operator}
L[\psi]=\sum_{i=1}^N \Big[g_{i,\varepsilon}'''(u_{x_i})\, u_{x_i\,x_i}\,\psi_{x_i}+g''_{i,\varepsilon}(u_{x_i})\, \psi_{x_i\,x_i}\Big],
\end{equation}
then \eqref{derivala} can be simply written as $L[u_{x_j}]=(f_\varepsilon)_{x_j}$.
Also observe that
\[
L[\varphi\,\psi]=\varphi\, L[\psi]+\psi\,L[\varphi]+2\,\sum_{i=1}^N g''_{i,\varepsilon}(u_{x_i})\, \varphi_{x_i}\,\psi_{x_i}.
\]
Thus for $u_{x_j}^2$ we obtain 
\[
L[u_{x_j}^2]= 2\,u_{x_j}\, L[u_{x_j}]+2\,\sum_{i=1}^N g''_{i,\varepsilon}(u_{x_i})\, \left[\left(u_{x_j}\right)_{x_i}\right]^2=2\,u_{x_j}\, (f_\varepsilon)_{x_j}+2\,\sum_{i=1}^N g''_{i,\varepsilon}(u_{x_i})\, u_{x_j\,x_i}^2.
\]
By linearity of $L$ we thus get
\[
L\left[|\nabla u|^2\right]= 2\, \sum_{j=1}^N u_{x_j}\, (f_\varepsilon)_{x_j}+2\,\sum_{i,j=1}^N g''_{i,\varepsilon}(u_{x_i})\, u_{x_j\,x_i}^2.
\]
We now fix a pair of concentric balls $B_{r_0}\Subset B_{R_0}\Subset B$ as in the statement of Proposition \ref{lm:berrrnstein}.
Let $\zeta\in C^2_0(B_{R_0})$ be a function such that $0\le \zeta\le 1$ and
\begin{equation}
\label{zeta}
\zeta=1\mbox{ on } B_{r_0},\qquad|\nabla \zeta|^2\le \frac{C}{(R_0-r_0)^2}\, \zeta\qquad \mbox{ and }\qquad |D^2\zeta|\le \frac{C}{(R_0-r_0)^2},
\end{equation}
and consider in $B_{R_0}$ the equation for the function $\zeta\,|\nabla u|^2+\lambda\, u^2$. The crucial parameter $\lambda$ will be chosen later. By using the product rule for $L$ and its linearity, we get
\[
\begin{split}
L\left[\zeta\,|\nabla u|^2+\lambda\, u^2\right]&=\zeta\,L\left[|\nabla u|^2\right]+|\nabla u|^2\,L[\zeta]+2\,\sum_{i,j=1}^N g''_{i,\varepsilon}(u_{x_i})\, (u_{x_j}^2)_{x_i}\,\zeta_{x_i}+\lambda\, L[u^2]\\
&=2\, \sum_{j=1}^N u_{x_j}\, (f_\varepsilon)_{x_j}\,\zeta+2\,\sum_{i,j=1}^N g''_{i,\varepsilon}(u_{x_i})\, u_{x_j\,x_i}^2\,\zeta\\
&+|\nabla u|^2\,L[\zeta]+2\,\sum_{i,j=1}^N g''_{i,\varepsilon}(u_{x_i})\, (u_{x_j}^2)_{x_i}\,\zeta_{x_i}\\
&+2\, u\,\lambda\, L[u]+2\,\lambda\,\sum_{i=1}^N g''_{i,\varepsilon}(u_{x_i})\, u_{x_i}^2.
\end{split}
\]
By using the expression \eqref{operator} of $L$ and the equation \eqref{pointwise},
we can rewrite the previous identity as follows
\begin{equation}
\label{bernstein}
\begin{split}
L\left[\zeta\,|\nabla u|^2+\lambda\, u^2\right]&=2\,\mathcal{I}+2\,\zeta\, \mathcal{G}_1+2\, \mathcal{G}_2+2\,\lambda\, \mathcal{G}_3+\mathcal{G}_4,
\end{split}
\end{equation}
where we used the notation
\[
\mathcal{I}=\lambda\,u\,f_\varepsilon+\zeta\,\sum_{j=1}^N u_{x_j}\,(f_\varepsilon)_{x_j},\qquad \mathcal{G}_1=\sum_{i,j=1}^N g''_{i,\varepsilon}(u_{x_i})\, \left|u_{x_j\,x_i}\right|^2
\]
\[
\mathcal{G}_2=\sum_{i,j=1}^N g''_{i,\varepsilon}(u_{x_i})\, (|u_{x_j}|^2)_{x_i}\,\zeta_{x_i}+\frac{|\nabla u|^2}{2}\,\sum_{i=1}^N g''_{i,\varepsilon}(u_{x_i})\,\zeta_{x_i\,x_i},\qquad\mathcal{G}_3=\sum_{i=1}^N g''_{i,\varepsilon}(u_{x_i})\, |u_{x_i}|^2,
\]
and
\[
\mathcal{G}_4=\sum_{i=1}^N g'''_{i,\varepsilon}(u_{x_i})\, u_{x_i\,x_i}\,\left[2\,\lambda\,u\,u_{x_i}+|\nabla u|^2\, \zeta_{x_i}\right].
\]
We proceed to estimate separately each term on the right-hand side of \eqref{bernstein}.
\vskip.2cm
\centerline{\bf The term $\mathcal{I}$.}
\vskip.2cm
For this, by Young inequality we get
\begin{equation}
\label{I}
\begin{split}
\mathcal{I}&\ge -\lambda\,\|u\|_{L^\infty(B_{R_0})}\,\|f_\varepsilon\|_{L^\infty(B_{R_0})}-N\, \zeta\,|\nabla u|\, |\nabla f_\varepsilon|\\
&\ge -\lambda\,M\,\|f\|_{L^\infty(2\,B)}-\frac{N}{p}\,\zeta\, |\nabla u|^p-\frac{N}{p'}\,\zeta\, \|\nabla f\|_{L^\infty(2\,B)}^{p'},
\end{split}
\end{equation}
where $M$ is the constant appearing in \eqref{max}. We also used that $\|f_\varepsilon\|_{W^{1,\infty}(B_{R_0})}\le \|f\|_{W^{1,\infty}(2\,B)}$.
\vskip.2cm
\centerline{\bf The term $\mathcal{G}_1$.}
\vskip.2cm
This is a positive term and for the moment we simply keep it. It will act as a sponge term, in order to absorb (negative) terms containing the Hessian of $u$.
\vskip.2cm
\centerline{\bf The term $\mathcal{G}_2$.}
\vskip.2cm
This can be estimated by Young inequality and \eqref{zeta} as follows
\[
\begin{split}
\mathcal{G}_2&\ge -\frac{\tau}{2}\,\sum_{i,j=1}^N g''_{i,\varepsilon}(u_{x_i})\, u_{x_j\,x_i}^2\,\zeta_{x_i}^2-\frac{1}{2\,\tau}\,\sum_{i,j=1}^N g''_{i,\varepsilon}(u_{x_i})\,u_{x_j}^2\\
&-\frac{|\nabla u|^2}{2}\,\sum_{i=1}^N g''_{i,\varepsilon}(u_{x_i})\,|\zeta_{x_i\,x_i}|\\
&\ge-\tau\,\frac{C}{2\,(R_0-r_0)^2}\,\zeta\, \mathcal{G}_1-\frac{|\nabla u|^2}{2}\,\left(\frac{C}{(R_0-r_0)^2}+\frac{1}{\tau}\right)\,\sum_{i=1}^N g''_{i,\varepsilon}(u_{x_i}),
\end{split}
\]
where $\tau<1$ is a small positive parameter. We then observe that the last term can be further estimated by using
\begin{equation}
\label{g''}
g''_{i,\varepsilon}(u_{x_i})\le (p-1)\,|\nabla u|^{p-2}+1,
\end{equation}
so that
\[
|\nabla u|^2\, \sum_{j=1}^N g''_{i,\varepsilon}(u_{x_i})\le N((p-1)\, |\nabla u|^p+|\nabla u|^2)\le N\,\left(p-1+\frac{2}{p}\right)\, |\nabla u|^p+N\,\frac{p-2}{p}.
\]
In the end we get
\begin{equation}
\label{G2}
\mathcal{G}_2\ge -\tau\,C_1'\,\zeta\, \mathcal{G}_1-\frac{C'_2}{2\,\tau}\,|\nabla u|^p-\frac{C'_2}{2\,\tau},
\end{equation}
where $C'_1=C_1'(C,R_0-r_0)>0$ and $C'_2=C'_2(p,N,C,R_0-r_0)>0$.
\vskip.2cm
\centerline{\bf The term $\mathcal{G}_3$.}
\vskip.2cm
By using the form of $g_{i,\varepsilon}$, the convexity of the map $m\mapsto m^{p-2}$ and recalling the definition \eqref{delta} of $\delta$, we have
\[
\begin{split}
\mathcal{G}_3=\sum_{i=1}^N g''_{i,\varepsilon}(u_{x_i})\, |u_{x_i}|^2&\ge (p-1)\,\left(\frac{1}{2^{p-3}}\, \sum_{i=1}^N|u_{x_i}|^{p}-\delta^{p-2}\,|\nabla u|^2\right).
\end{split}
\]
By further applying Young inequality to estimate the term $|\nabla u|^2$ and using that
\[
\sum_{i=1}^N |u_{x_i}|^p\ge N^\frac{2-p}{2}\, |\nabla u|^p,
\]
we end up with
\begin{equation}
\label{G3}
\mathcal{G}_3\ge C''_1\, |\nabla u|^p-C''_2,
\end{equation}
where $C''_i=C''(p,N,\delta)>0$, $i=1,2$.
\vskip.2cm
\centerline{\bf The term $\mathcal{G}_4$}
\vskip.2cm
This is the most delicate term and {\it it is precisely here that the condition $p\ge 4$ becomes vital}. First we have
\[
\begin{split}
\mathcal{G}_4\ge -\sum_{i=1}^N |g'''_{i,\varepsilon}(u_{x_i})|\, |u_{x_i\,x_i}|\,\left[2\,\lambda\,|u|\,|u_{x_i}|+|\nabla u|^2\,|\zeta_{x_i}|\right].
\end{split}
\]
Then we observe that by Cauchy-Schwarz inequality (recall the definition \eqref{gepsilon} of $g_{i,\varepsilon}$) we have
\[
\begin{split}
|\nabla u|^2\,\sum_{i=1}^N |g'''_{i,\varepsilon}(u_{x_i})|\,|u_{x_i\,x_i}|\, \,|\zeta_{x_i}|&\le c\,|\nabla u|^2\,\left(\sum_{i=1}^N \,(|u_{x_i}|-\delta_i)_+^{p-2}\,u_{x_i\,x_i}^2\,|\zeta_{x_i}|^2\right)^\frac{1}{2}\\
&\times\left(\sum_{i=1}^N (|u_{x_i}|-\delta_i)_+^{p-4}\right)^\frac{1}{2}\\
&\le c\,\left(\sum_{i=1}^N \,(|u_{x_i}|-\delta_i)_+^{p-2}\,u_{x_i\,x_i}^2\,|\zeta_{x_i}|^2\right)^\frac{1}{2}\,|\nabla u|^\frac{p}{2}
\end{split}
\]
for some constant $c>0$ depending on $p$ and $N$ only. In the last inequality we used that
\[
(|u_{x_i}|-\delta_i)_+^{p-4}\le |u_{x_i}|^{p-4},
\]
since $p\ge 4$.
By further using \eqref{zeta}, the definition of $g_{i,\varepsilon}$ and Young inequality, from the previous inequality we get
\[
|\nabla u|^2\, \sum_{i=1}^N |g'''_{i,\varepsilon}(u_{x_i})|\, |u_{x_i\,x_i}|\,|\zeta_{x_i}|\le c\, \left(\zeta\,\mathcal{G}_1\right)^\frac{1}{2}\, |\nabla u|^\frac{p}{2}\le \tau\,C'''_1\,\zeta\, \mathcal{G}_1+\frac{C'''_1}{\tau}\,|\nabla u|^p,
\]
for some constant $C'''_1=C'''_1(C,N,p,R_0-r_0)>0$.
Similarly, we have
\[
\begin{split}
2\,\lambda\,\sum_{i=1}^N |g'''_{i,\varepsilon}(u_{x_i})|\, |u_{x_i\,x_i}|\,|u|\,|u_{x_i}|&\le c\,\lambda\,M\, \left(\sum_{i=1}^N g''_{i,\varepsilon}(u_{x_i})\,u_{x_i\,x_i}^2 \right)^\frac{1}{2}\,\left(\sum_{i=1}^N  (|u_{x_i}|-\delta_i)_+^{p-4}\,|u_{x_i}|^2\right)^\frac{1}{2}\\
&\le c\, \lambda\,\mathcal{G}_1^\frac{1}{2}\, |\nabla u|^\frac{p-2}{2}\le C'''_2\, \lambda^\frac{2\,p}{p+2}\, \mathcal{G}_1^\frac{p}{p+2}+C_2'''\,|\nabla u|^{p},
\end{split}
\]
for some constant $C'''_2=C_2'''(N,p,M)>0$. By keeping everything together, we get
\begin{equation}
\label{G4}
\mathcal{G}_4\ge -\tau\,C'''_1\,\zeta\, \mathcal{G}_1-\left(\frac{C'''_1}{\tau}+C_2'''\right)\,|\nabla u|^p-C'''_2\, \lambda^\frac{2\,p}{p+2}\, \mathcal{G}_1^\frac{p}{p+2}.
\end{equation}
\vskip.2cm
\centerline{{\bf Collecting all the estimates.}}
\vskip.2cm
We now go back to \eqref{bernstein} and use \eqref{I}, \eqref{G2}, \eqref{G3} and \eqref{G4}. Then we get
\[
\begin{split}
L[\zeta\, |\nabla u|^2+\lambda\, u^2]&\ge \Big[2-\tau\,\big(2\,C_1'+C_1'''\big)\Big]\,\zeta\,\mathcal{G}_1-C'''_2\, \lambda^\frac{2\,p}{p+2}\,\mathcal{G}_1^\frac{p}{p+2}\\
&+\left(2\,\lambda\,C''_1-\frac{C'_2}{\tau}-2\,\zeta\,\frac{N}{p}-\frac{C'''_1}{\tau}-C'''_2\right)\,|\nabla u|^p\\
&-\left(2\,M\,\lambda\, \|f\|_{L^\infty(2\,B)}+2\,\zeta\,\frac{N}{p'}\, \|\nabla f\|_{L^\infty(2\,B)}^{p'}+\frac{C'_2}{\tau}+2\,\lambda\,C''_2\right).
\end{split}
\]
We now choose $\tau$ small enough, in order to make the coefficient of $\zeta\, \mathcal{G}_1$ strictly positive. Then we also choose $\lambda\gg 1$ large enough, so that $|\nabla u|^p$ as well has a strictly positive coefficient. Observe that the choices of $\tau$ and $\lambda$ only depend on the relevant data of the problem and are in particular independent of $\varepsilon$.
By setting for simplicity $\lambda^{2\,p/(p+2)}=\Lambda^2$, this gives
\begin{equation}
\label{quasi}
L[\zeta\, |\nabla u|^2+\lambda\, u^2]\ge \widetilde{c}_1\,\zeta\,\mathcal{G}_1-\widetilde c_2\,\Lambda^{2}\,\mathcal{G}_1^\frac{p}{p+2}+\widetilde{c}_1\, |\nabla u|^p-\widetilde{c}_2,
\end{equation}
where $\widetilde c_1>0$ and $\widetilde c_2>0$ are constants that depend only on $\|f\|_{W^{1,\infty}(2\,B)}, R_0-r_0,\, N,\, p,\, \delta$ and the constant $M$ appearing in \eqref{max}.
\vskip.2cm
Let us now consider the maximum of the function $\zeta\,|\nabla u|^2+\lambda\, u^2$ in $B_{R_0}$. If this maximum is assumed at $x_0\in\partial B_{R_0}$, then we get
\[
\begin{split}
\max_{B_{r_0}} |\nabla u|^2&\le \max_{B_{r_0}} \Big[\zeta\, |\nabla u|^2+\lambda\, u^2\Big]\le \max_{B_{R_0}} \Big[\zeta\, |\nabla u|^2+\lambda\, u^2\Big]\\
&=\zeta(x_0)\, |\nabla u(x_0)|^2+\lambda\, u(x_0)^2\le \lambda\, \|u\|^2_{L^\infty(B_{R_0})}\le \lambda\, M,
\end{split}
\]
thanks to the fact that $\zeta= 1$ on $B_{r_0}$ and $\zeta\equiv 0$ on $\partial B_{R_0}$. This would prove the local Lipschitz estimate. 
\par
In order to conclude, let us now assume that $x_0\in B_{R_0}$, then we get
\[
\nabla\left(\zeta\,|\nabla u|^2+\lambda\, u^2\right)=0\qquad \mbox{ at } x=x_0,
\]
and
\[
D^2\left(\zeta\,|\nabla u|^2+\lambda\, u^2\right)\le 0\qquad \mbox{ at } x=x_0.
\]
Thus at the maximum point $x_0$ we have
\[
L[\zeta\,|\nabla u|^2+\lambda\,u^2]=\sum_{i=1}^N g''_{i,\varepsilon}(u_{x_i})\, \left(\zeta\,|\nabla u|^2+\lambda\,u^2\right)_{x_i\,x_i}\le 0.
\]
By combining this with \eqref{quasi}, we then get
\[
\widetilde{c}_2\ge \widetilde{c}_1\,\zeta\,\mathcal{G}_1-\widetilde c_2\,\Lambda^{2}\,\mathcal{G}_1^\frac{p}{p+2}+\widetilde{c}_1\, |\nabla u|^p.
\]
We multiply the previous by $\zeta(x_0)^{p/2}>0$, then by Young inequality once again we get
\[
\begin{split}
\widetilde{c}_2\,\zeta(x_0)^\frac{p}{2}&\ge \widetilde{c}_1\,\zeta(x_0)^\frac{p+2}{2}\,\mathcal{G}_1-\widetilde c_2\,\Lambda^2\,\left(\zeta(x_0)^\frac{p+2}{2}\,\mathcal{G}_1\right)^\frac{p}{p+2}+\widetilde{c}_1\, \zeta(x_0)^\frac{p}{2}\,|\nabla u(x_0)|^p\\
&\ge \left(\widetilde{c}_1-\widetilde c_2\,\frac{p}{p+2}\,\tau\,\Lambda^2\right)\,\zeta(x_0)^\frac{p+2}{2}\, \mathcal{G}_1-\frac{2\,\tau^{-\frac{p}{2}}}{p+2}\,\Lambda^2+\widetilde{c}_1\, \zeta(x_0)^\frac{p}{2}\,|\nabla u(x_0)|^p.
\end{split}
\]
If we choose
\[
\tau=\frac{p+2}{p}\, \frac{\widetilde{c}_1}{\widetilde c_2}\,\frac{1}{\Lambda^2},
\]
and use that $\zeta\le 1$, we finally get
\begin{equation}
\label{ctilde}
\zeta(x_0)\,|\nabla u(x_0)|^2\le \widetilde{C},
\end{equation}
with $\widetilde{C}=\widetilde{C}(N,p,\delta,\|f\|_{W^{1,\infty}(2\,B)},M,R_0-r_0)>0$. By using this bound we get 
\[
\begin{split}
\max_{B_{r_0}} |\nabla u|^2\le \max_{B_{r_0}} \Big[\zeta\,|\nabla u|^2+\lambda\, u^2\Big]&\le \zeta(x_0)\, |\nabla u(x_0)|^2+\lambda\, u(x_0)^2\\
&\le \widetilde C\, +\lambda\, u(x_0)^2\le \widetilde C+\lambda\,M^2,
\end{split}
\]
which gives the desired conclusion.
\end{proof}

\appendix

\section{Some properties of the functions $g_i$}
\label{sec:A}

The functions $g_i$ have the following convexity property. 
\begin{lm}
\label{lm:convessofuori}
For every $t_1,t_2\in\mathbb{R}$ such that $|t_1-t_2|>2\, \delta_i$ we have
\begin{equation}
\label{stricte}
g_i((1-s)\, t_1+s\, t_2)<(1-s)\,g_i(t_1)+s\, g_i(t_2),\qquad s\in(0,1),\ i=1,\dots,N.
\end{equation}
\end{lm}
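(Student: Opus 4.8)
The plan is to use two structural properties of $g_i$: it is convex on all of $\mathbb{R}$, and it is \emph{strictly} convex on each half-line $(-\infty,-\delta_i]$ and $[\delta_i,\infty)$, while on the middle interval $[-\delta_i,\delta_i]$ it merely vanishes (so is affine there). Convexity of $g_i$ is immediate: $t\mapsto(|t|-\delta_i)_+$ is convex and nonnegative, and $s\mapsto s^p/p$ is convex and nondecreasing on $[0,\infty)$, so the composition $g_i$ is convex; moreover $g_i$ is even, i.e. $g_i(-t)=g_i(t)$.

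First I would argue by contradiction, assuming that \eqref{stricte} fails, i.e. that for some $s_0\in(0,1)$ one has $g_i((1-s_0)\,t_1+s_0\,t_2)=(1-s_0)\,g_i(t_1)+s_0\,g_i(t_2)$. After relabelling we may assume $t_1<t_2$. By the elementary fact that a convex function which touches one of its chords at an interior point must coincide with that chord on the whole segment, it follows that $g_i$ is affine on $[t_1,t_2]$.

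Next I would derive a contradiction from this. Since $t_2-t_1>2\,\delta_i$, which is exactly the length of $[-\delta_i,\delta_i]$, the segment $[t_1,t_2]$ cannot be contained in $[-\delta_i,\delta_i]$; in particular at least one of $t_1,t_2$ has modulus strictly larger than $\delta_i$. Using the symmetry $g_i(-t)=g_i(t)$, and replacing $(t_1,t_2)$ by $(-t_2,-t_1)$ if necessary, we may assume $t_2>\delta_i$. Put $a=\max\{t_1,\delta_i\}$, so that $a<t_2$ and $[a,t_2]\subset[\delta_i,\infty)$. On this interval $g_i(t)=\frac{1}{p}\,(t-\delta_i)^p$, and the function $s\mapsto s^p/p$ is strictly convex on $[0,\infty)$ because $p\ge 2>1$ (its derivative $s^{p-1}$ is strictly increasing there). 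Hence $g_i$ is strictly convex on $[a,t_2]$, which contradicts the fact that it is affine on $[t_1,t_2]\supset[a,t_2]$. This contradiction establishes \eqref{stricte}.

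The argument is essentially routine; the one point requiring a little care is that $g_i$ need not be twice differentiable near $\pm\delta_i$ (cf. the Remarks on the smoothness of $g_i$), which is why I would avoid any argument through $g_i''$ and instead work with the chord characterisation of affine functions together with the strict convexity of $s\mapsto s^p$ on $[0,\infty)$. The only mildly delicate part is organising the case distinction so as to land on a genuine subinterval inside $[\delta_i,\infty)$ (or, symmetrically, inside $(-\infty,-\delta_i]$), which is exactly what the hypothesis $|t_1-t_2|>2\,\delta_i$ guarantees.
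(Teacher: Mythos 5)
Your proof is correct and follows essentially the same route as the paper's: establish non-strict convexity, note that equality at an interior point forces $g_i$ to be affine on $[t_1,t_2]$, and observe that affineness can only occur inside $[-\delta_i,\delta_i]$, whose length is only $2\delta_i$. The paper states the last step without justification, whereas you spell out why $g_i$ is strictly convex off that interval; this is a welcome expansion of the same argument, not a different one.
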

\begin{proof}
The function \(g_i\) is convex so that the inequality \(\leq\) holds true for every \(t_1, t_2\). If \(g_i((1-s)\, t_1+s\, t_2)=(1-s)\,g_i(t_1)+s\, g_i(t_2)\), then  \(g_i\) is affine on the segment \([t_1,t_2]\). This can only happen when \(t_1,t_2\in [-\delta_i, \delta_i]\), in which case \(|t_1-t_2|\leq 2\delta_i\).
\end{proof}

They also satisfy the following Lipschitz-type estimate.
\begin{lm}
Let $p\ge 2$. For every $t_1,t_2\in\mathbb{R}$ and $i=1,\dots,N$, we have
\begin{equation}
\label{lipschitz}
|g_i(t_1)-g_i(t_2)|\le \left(|t_1|^{p-1}+|t_2|^{p-1}\right)\, |t_1-t_2|.
\end{equation}
\end{lm}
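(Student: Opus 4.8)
The plan is to reduce everything to the pointwise bound $|g_i'(t)|\le |t|^{p-1}$ and then integrate along the segment joining $t_1$ and $t_2$.

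First I would record that, since $p\ge 2$, the function $g_i(t)=\frac1p(|t|-\delta_i)_+^p$ belongs to $C^1(\mathbb{R})$ with
\[
g_i'(t)=(|t|-\delta_i)_+^{p-1}\,\frac{t}{|t|}\quad(t\neq 0),\qquad g_i'(0)=0.
\]
The only points requiring a check are $t=0$ and $t=\pm\delta_i$: near $t=0$ the function is either identically $0$ (when $\delta_i>0$) or equals $\frac1p|t|^p$ (when $\delta_i=0$), and near $t=\pm\delta_i$ it equals $\frac1p(|t|-\delta_i)_+^p$, which is $C^1$ because the exponent $p$ is at least $2$. From this formula one reads off the key pointwise estimate
\[
|g_i'(t)|=(|t|-\delta_i)_+^{p-1}\le |t|^{p-1},\qquad t\in\mathbb{R}.
\]

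Next I would write, for arbitrary $t_1,t_2\in\mathbb{R}$,
\[
g_i(t_1)-g_i(t_2)=\int_0^1 \frac{d}{ds}\,g_i\big(t_2+s(t_1-t_2)\big)\,ds=(t_1-t_2)\int_0^1 g_i'\big(t_2+s(t_1-t_2)\big)\,ds,
\]
which is legitimate because $g_i\in C^1$. Using the pointwise bound above, together with $|t_2+s(t_1-t_2)|\le (1-s)|t_2|+s|t_1|$ and the convexity of $r\mapsto r^{p-1}$ on $[0,\infty)$ (valid since $p\ge 2$), the integrand is controlled by $(1-s)|t_2|^{p-1}+s|t_1|^{p-1}$; integrating in $s$ yields
\[
|g_i(t_1)-g_i(t_2)|\le \frac{|t_1|^{p-1}+|t_2|^{p-1}}{2}\,|t_1-t_2|,
\]
which is in fact slightly stronger than \eqref{lipschitz}. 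Alternatively, the mean value theorem gives $|g_i(t_1)-g_i(t_2)|\le \max\{|t_1|,|t_2|\}^{p-1}\,|t_1-t_2|$, from which \eqref{lipschitz} also follows immediately.

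There is no real obstacle here: the statement is an elementary consequence of the fundamental theorem of calculus combined with the trivial gradient bound, and the only mild point of care is verifying that $g_i$ is genuinely $C^1$ across the corners $t=0$ and $t=\pm\delta_i$, so that the integral representation of $g_i(t_1)-g_i(t_2)$ is valid.
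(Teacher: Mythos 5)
Your proof is correct and follows essentially the same route as the paper: both bound the intermediate value of $g_i'$ via the pointwise estimate $|g_i'(t)|\le|t|^{p-1}$ together with a convexity argument (the paper uses convexity of $t\mapsto|g_i'(t)|$ after the mean value theorem; you use convexity of $r\mapsto r^{p-1}$ after the fundamental theorem of calculus). The only difference is cosmetic, and your integral version even produces the slightly sharper constant $1/2$, which the paper neither needs nor records.
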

\begin{proof}
By basic calculus we have
\[
|g_i(t_1)-g_i(t_2)|=|g'_i((1-s)\, t_1+s\, t_2)|\, |t_1-t_2|,
\]
for some $s\in[0,1]$. Since $p\ge 2$, the function $t\mapsto |g'_i(t)|$ is convex and
\[
|g'_i(t)|=(|t|-\delta_i)_+^{p-1}\le |t|^{p-1},\qquad t\in\mathbb{R}.
\] 
Thus we get the conclusion.
\end{proof}
The following basic estimate has been used various times.
\begin{lm}
\label{lm:second_dessous}
Let $p\ge 2$. For every $i=1,\dots,N$ and every $T\ge \delta_i$, we have
\[
g''_{i}(t)\ge (p-1)\,\left(\frac{T-\delta_i}{T}\right)^{p-2}\,\left(T^2+(|t|-T)_+^2\right)^\frac{p-2}{2},\qquad \mbox{ for every } |t|\ge T.
\]
\end{lm}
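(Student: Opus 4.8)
The plan is to prove the pointwise lower bound for $g_i''$ by a direct calculation, exploiting that for $|t|\ge \delta_i$ one has $g_i''(t)=(p-1)(|t|-\delta_i)^{p-2}$, and that the claimed inequality only involves $|t|$, so we may assume $t\ge T\ge \delta_i$ without loss of generality. First I would rewrite the desired inequality, after dividing by $(p-1)$, as
\[
(t-\delta_i)^{p-2}\ge \left(\frac{T-\delta_i}{T}\right)^{p-2}\left(T^2+(t-T)^2\right)^{\frac{p-2}{2}},
\]
and since $p-2\ge 0$ it suffices (taking $(p-2)$-th roots, which is monotone) to establish the single clean inequality
\[
\frac{t-\delta_i}{T-\delta_i}\ge \frac{\sqrt{T^2+(t-T)^2}}{T},\qquad t\ge T.
\]

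The heart of the matter is therefore this last scalar inequality, which I would prove by squaring both sides and clearing denominators. Setting $a=T-\delta_i\ge 0$ and $s=t-T\ge 0$, so that $t-\delta_i=a+s$ and $T=a+\delta_i$, the inequality becomes $T^2(a+s)^2\ge a^2(T^2+s^2)$, i.e.
\[
T^2\big((a+s)^2-a^2\big)\ge a^2 s^2,
\]
that is $T^2 s(2a+s)\ge a^2 s^2$. For $s=0$ this is an equality; for $s>0$ we may divide by $s$ and must check $T^2(2a+s)\ge a^2 s$, equivalently $2aT^2+T^2 s\ge a^2 s$, which holds because $T\ge a$ (indeed $T=a+\delta_i\ge a$) gives $T^2 s\ge a^2 s$, and the remaining term $2aT^2\ge 0$. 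This closes the argument.

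I would then assemble the pieces: the scalar inequality gives, after raising to the power $p-2\ge 0$ and multiplying by $(p-1)$, exactly $g_i''(t)=(p-1)(t-\delta_i)^{p-2}\ge (p-1)\left(\frac{T-\delta_i}{T}\right)^{p-2}\left(T^2+(t-T)^2\right)^{\frac{p-2}{2}}$, and by symmetry of $g_i$ the same holds with $t$ replaced by $|t|$ for all $|t|\ge T$. The only mild subtlety to mention is the edge case $p=2$, where the exponent $p-2=0$ makes both sides equal to $p-1$ and the statement is trivially true; and the degenerate case $T=\delta_i$, where $(T-\delta_i)/T=0$ and the right-hand side vanishes, so the inequality holds trivially as well. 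I do not expect any real obstacle here — the whole lemma reduces to the elementary observation that $T\ge T-\delta_i$; the only thing requiring minor care is keeping track of the reduction to the one-variable inequality and the harmless boundary cases.
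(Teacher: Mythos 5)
Your proof is correct, and it takes essentially the same route as the paper: both reduce the statement to the one-variable inequality $\tfrac{t-\delta_i}{T-\delta_i}\ge \tfrac{1}{T}\sqrt{T^2+(t-T)^2}$ for $t\ge T$, then raise to the power $p-2$. The only cosmetic difference is in how the scalar inequality is established: you square and clear denominators, while the paper uses the two-step chain $\tfrac{T}{T-\delta_i}(t-\delta_i)\ge t=T+(t-T)\ge\sqrt{T^2+(t-T)^2}$, the last step being $a+b\ge\sqrt{a^2+b^2}$ for $a,b\ge 0$.
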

\begin{proof}
For $T=\delta_i$ there is nothing to prove, thus we can suppose that $T>\delta_i$. We use the elementary inequality 
\[
\frac{T}{T-\delta_i}\, (|t|-\delta_i)\ge |t|, \qquad \mbox{ for every } |t|\ge T.
\]
This implies that for every $|t|\ge T$, we have
\[
\frac{T}{T-\delta_i}\, (|t|-\delta_i)_+\ge T\, +(|t|-T)_+\ge \left(T^2\, +(|t|-T)_+^2\right)^\frac{1}{2}.
\]
By multiplying everything by $(T-\delta_i)/T$ and raising to the power $p-2$, we get the desired conclusion.
\end{proof}

\section{An anisotropic Sobolev inequality in dimension $2$}
\label{sec:B}

In the proof of Proposition \ref{lm:2d} we used Sobolev-Troisi inequality. For the reader's convenience, we give a proof of the particular case we needed.
\begin{lm}
Let $1< q<2$, then for every $u\in C^\infty_0(\mathbb{R}^2)$ we have
\begin{equation}
\label{troisi}
\mathcal{T}_q\,\left(\int_{\mathbb{R}^2} |u|^\frac{4\,q}{2-q}\, dx\right)^\frac{2-q}{2\,q}\le \left(\int_{\mathbb{R}^2} |u_{x_1}|^2\, dx\right)^\frac{1}{2}\,\left(\int_{\mathbb{R}^2} |u_{x_2}|^q\, dx\right)^\frac{1}{q},
\end{equation}
where the constant $\mathcal{T}_q$ is given by
\[
\mathcal{T}_q=\frac{(2-q)^2}{4\,q^2-(2-q)^2}>0.
\]
\end{lm}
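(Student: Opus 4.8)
The plan is to adapt the classical Gagliardo slicing argument, splitting the target exponent $r:=\frac{4q}{2-q}$ as $r=a+b$, with the two pieces tuned to the two (different) derivative exponents $2$ and $q$. Precisely, I would set
\[
a=\frac{q+2}{2-q},\qquad b=\frac{3q-2}{2-q},\qquad q'=\frac{q}{q-1},
\]
and record the elementary identities $a+b=r$, $\ 2\,(a-1)=r$, $\ (b-1)\,q'=r$, together with $a>1$ and $b>1$, all of which hold for $1<q<2$.

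First, for $u\in C_0^\infty(\mathbb{R}^2)$ and every $(x_1,x_2)\in\mathbb{R}^2$, the fundamental theorem of calculus applied to the $C^1$ functions $t\mapsto|u(t,x_2)|^a$ and $s\mapsto|u(x_1,s)|^b$ (this is where $a,b>1$ is used) gives
\[
|u(x_1,x_2)|^a\le a\int_{\mathbb{R}}|u(t,x_2)|^{a-1}\,|u_{x_1}(t,x_2)|\,dt,\qquad
|u(x_1,x_2)|^b\le b\int_{\mathbb{R}}|u(x_1,s)|^{b-1}\,|u_{x_2}(x_1,s)|\,ds.
\]
Multiplying these and integrating over $\mathbb{R}^2$, the right-hand side is a product of a function of $x_2$ and a function of $x_1$, so Fubini yields
\[
\int_{\mathbb{R}^2}|u|^{r}\,dx\le a\,b\left(\int_{\mathbb{R}^2}|u|^{a-1}\,|u_{x_1}|\,dx\right)\left(\int_{\mathbb{R}^2}|u|^{b-1}\,|u_{x_2}|\,dx\right).
\]

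Next I would bound each factor by Hölder's inequality: with exponents $(2,2)$ in the first and $(q',q)$ in the second, and using $2\,(a-1)=r$ and $(b-1)\,q'=r$, this gives
\[
\int_{\mathbb{R}^2}|u|^{r}\,dx\le a\,b\left(\int_{\mathbb{R}^2}|u|^{r}\,dx\right)^{\frac12+\frac1{q'}}\left(\int_{\mathbb{R}^2}|u_{x_1}|^2\,dx\right)^{\frac12}\left(\int_{\mathbb{R}^2}|u_{x_2}|^q\,dx\right)^{\frac1q}.
\]
Since $u\in C_0^\infty(\mathbb{R}^2)$ we have $\int_{\mathbb{R}^2}|u|^{r}\,dx<\infty$, so this factor may be absorbed on the left (the case $u\equiv0$ being trivial); using $1-\frac12-\frac1{q'}=\frac{2-q}{2q}$ one arrives at
\[
\left(\int_{\mathbb{R}^2}|u|^{\frac{4q}{2-q}}\,dx\right)^{\frac{2-q}{2q}}\le a\,b\left(\int_{\mathbb{R}^2}|u_{x_1}|^2\,dx\right)^{\frac12}\left(\int_{\mathbb{R}^2}|u_{x_2}|^q\,dx\right)^{\frac1q}.
\]
It remains to compute $a\,b=\dfrac{(q+2)(3q-2)}{(2-q)^2}=\dfrac{4q^2-(2-q)^2}{(2-q)^2}=\dfrac{1}{\mathcal{T}_q}$, which gives \eqref{troisi}. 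The only genuinely delicate point is the bookkeeping behind the choice of $a$ and $b$, namely arranging the simultaneous conditions $2\,(a-1)=(b-1)\,q'=a+b$ so that the three integrals of powers of $|u|$ on the right collapse to a single power of $\int|u|^{r}$; once this is in place the rest is routine, and the a priori finiteness of $\int|u|^{r}$ (immediate for $u\in C_0^\infty$) is precisely what legitimises the absorption.
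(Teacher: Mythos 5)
Your proof is correct and follows essentially the same route as the paper's: the Gagliardo-type slicing via the fundamental theorem of calculus applied to $|u|^a$ and $|u|^b$, Fubini, Hölder with exponents $(2,2)$ and $(q',q)$, and the same tuning $2(a-1)=(b-1)q'=a+b$ producing $a=\frac{q+2}{2-q}$, $b=\frac{3q-2}{2-q}$. Your added care about $a,b>1$ and the explicit absorption step (legitimised by finiteness of $\int|u|^r$) are welcome precisions that the paper leaves implicit.
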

\begin{proof}
We first observe that for every $\alpha,\beta>1$, by basic calculus we have
\[
|u(x_1,x_2)|^\alpha = \alpha\, \int_{-\infty}^{x_1} u_{x_1}(t,x_2)\, |u(t,x_2)|^{\alpha-2}\, u(t,x_2)\, dt,
\]
and
\[
|u(x_1,x_2)|^\beta = \beta\, \int_{-\infty}^{x_2} u_{x_2}(x_1,s)\, |u(x_1,s)|^{\beta-2}\, u(x_1,s)\, ds.
\]
Thus
\[
\begin{split}
|u(x_1,x_2)|^{\alpha+\beta}\le \alpha\,\beta\, &\left(\int_{\mathbb{R}} |u_{x_1}(t,x_2)|\, |u(t,x_2)|^{\alpha-1}\, dt\right)\,\left(\int_{\mathbb{R}} |u_{x_2}(x_1,s)|\, |u(x_1,s)|^{\beta-1}\, ds\right).
\end{split}
\]
If we now integrate over $\mathbb{R}^2$ and use Fubini Theorem on the right-hand side, we get
\begin{equation}
\label{depassaggio}
\int_{\mathbb{R}^2} |u|^{\alpha+\beta}\, dx\le \alpha\,\beta\, \left(\int_{\mathbb{R}^2} |u_{x_1}|\, |u|^{\alpha-1}\, dx\right)\,\left(\int_{\mathbb{R}^2} |u_{x_2}|\,|u|^{\beta-1} dx\right).
\end{equation}
By H\"older inequality we then have
\[
\begin{split}
\left(\int_{\mathbb{R}^2} |u_{x_1}|\, |u|^{\alpha-1}\, dx\right)&\left(\int_{\mathbb{R}^2} |u_{x_2}|\,|u|^{\beta-1} dx\right)\\
&\le \left(\int_{\mathbb{R}^2} |u_{x_1}|^2\, dx\right)^\frac{1}{2}\,\left(\int_{\mathbb{R}^2} |u_{x_2}|^q\, dx\right)^\frac{1}{q}\\
&\times \left(\int_{\mathbb{R}^2} |u|^{2\,(\alpha-1)}\, dx\right)^\frac{1}{2} \left(\int_{\mathbb{R}^2} |u|^{\frac{q}{q-1}\,(\beta-1)} dx\right)^\frac{q-1}{q}.
\end{split}
\]
We now choose $\alpha$ and $\beta$ in such a way that
\[
2\,(\alpha-1)=\alpha+\beta\qquad \mbox{ and }\qquad \frac{q}{q-1}\,(\beta-1)=\alpha+\beta,
\]
that is
\[
\alpha=\frac{q+2}{2-q} \quad \mbox{ and }\quad  \beta=\frac{3\,q-2}{2-q}.
\]
Observe that with these choices we have $\alpha+\beta=4\,q/(2-q)$. Thus from \eqref{depassaggio} we get \eqref{troisi}, with
\[
\mathcal{T}_q=\frac{1}{\alpha}\,\frac{1}{\beta}=\frac{(2-q)^2}{4\,q^2-(2-q)^2},
\]
as desired.
\end{proof}

\end{document}